\newtheorem{theorem}{Theorem}[section]
\newtheorem{lemma}[theorem]{Lemma}
\newtheorem{proposition}[theorem]{Proposition}
\newtheorem{corollary}[theorem]{Corollary}
\theoremstyle{definition}
\newtheorem{example}[theorem]{Example}
\theoremstyle{remark}
\newtheorem{remark}[theorem]{Remark}
\numberwithin{equation}{section}
\def\la{\lambda}
\def\al{\alpha}
\def\si{\sigma}
\def\e{\varepsilon}
\def\de{\delta}
\def\NN{{\mathbb N}}
\def\RR{{\mathbb R}}
\def\CC{{\mathbb C}}
\def\TT{{\mathbb T}}
\def\cT{{\mathcal T}}
\def\cS{{\mathcal S}}
\def\cK{{\mathcal K}}
\def\cM{{\mathcal M}}
\def\mD{{\mathcal D}}
\def\Re{{\rm Re}\,}
\def\Im{{\rm Im}\,}
\def\Int{{\rm Int}\,}
\def\conv{{\rm conv}\,}
\def\diag{{\rm diag}\,}
\def\dist{{\rm dist}\,}
\def\usim{\,\smash{\mathop{\sim}\limits^u}\,\,}
\def\mT{\mathcal T}
\def\mS{\mathcal S}
\def\mA{\mathcal A}
\def\mM{\mathcal M}
\def\mK{\mathcal K}
\def\diag{{\rm diag}\,}
\def\DD{{\mathbb D}}
\begin{document}
\title[Diagonals of operators and Blaschke's enigma]{Diagonals of operators and Blaschke's enigma}

\author{Vladimir M\"uller}
\address{Institute of Mathematics,
Czech Academy of Sciences,
ul. \v Zitna 25, Prague,
 Czech Republic}
\email{muller@math.cas.cz}

\author{Yuri Tomilov}
\address{Institute of Mathematics, Polish Academy of Sciences,
\' Sniadeckich str.8, 00-656 Warsaw, Poland}
\email{ytomilov@impan.pl}
\subjclass{Primary 47A20, 47A12, 47A10; Secondary 47B37}

\thanks{This work was  partially supported by the NCN grant
2014/13/B/ST1/03153, by the EU grant  ``AOS'', FP7-PEOPLE-2012-IRSES, No 318910, by grant No. 17-27844S of GA CR and RVO:67985840}

\keywords{diagonals, pinchings, dilations, numerical range, spectrum, powers}

\begin{abstract}
We introduce new techniques allowing one to construct diagonals of bounded Hilbert space
operators and operator tuples under ``Blaschke-type'' assumptions.
This provides a new framework for a number of results in the literature
and identifies, often large, subsets in the set of diagonals of \emph{arbitrary} bounded operators (and their tuples).
Moreover, our approach leads to substantial generalizations of the results due to Bourin, Herrero and Stout
having assumptions of a similar nature.
\end{abstract}

\maketitle
\section{Introduction}

Let $T$ be a bounded linear operator on a separable Hilbert space $H.$
If $(e_k)_{k=1}^N$ is an orthonormal basis in $H, 1 \le N\le \infty,$ then $T$ admits a matrix
representation $M_T=(\langle T e_i, e_j\rangle)_{i,j=1}^N.$
If $N<\infty$ then $M_T$ is a finite matrix, and it is  an essential part of the matrix theory to relate the properties of $T$
and $M_T.$ In particular, it is of substantial interest to express
the structure of $T$ in terms of the elements of $M_T.$
Even in this toy setting a number of natural questions, e.g. on the interplay between the spectrum of $T$
and the diagonal of $M_T$ appeared to be rather complicated, see e.g. \cite{Arveson06} and \cite{Tam} for a pertinent discussion.

For an infinite-dimensional space $H$ the relations between $T$ and $M_T$
become even more involved and depend on very advanced methods and techniques
stemming from various domains of analysis. A nice illustration
 of the interplay between $T$ and $M_T$ is provided by the famous Kadison-Singer
problem, its ``matrix'' reformulation and its solution by techniques originating
from matrix theory. We refer e.g. to \cite{Kadison-Singer} for a nice account.

It is well-known that for good enough  (usually Schatten class) $T$
the diagonal of $M_T$ carries a spectral information about $T,$
and for trace-class $T$ its  trace
is just the sum of eigenvalues by Lidskii's theorem.
Note that in this case the trace-class property of $T$ and the value of its trace do not change under a change of basis.
However, the situation becomes more complicated when $T$ is far from being compact.
One way to circumvent the problem is to vary the orthonormal basis $(e_k)_{k=1}^\infty$ and to consider
the whole set $\mathcal D(T):=\{\langle T e_k, e_k\rangle_{k=1}^{\infty}\}\subset \ell_\infty(\mathbb N)$ of diagonals of $T.$

A lot of research has been done
to understand the structure of $\mathcal D(T)$ for general $T$. However, it still remains  mysterious,
even for very particular classes of operators.
The aim of this paper is to introduce and study a Blaschke-type condition  leading to a description of
 ``large'' subsets of $\mathcal D(T)$ in a priori terms. This is done
in a very general set-up of  bounded linear operators $T$ and their tuples.

To put our consideration into a proper framework, let us first review major achievements made so far
in the study of diagonals of operators on infinite-dimensional spaces.
First, note that (as mentioned in \cite{Loreaux16}) the study of diagonals can be understood in at least two senses:

(a) as the study of $\mathcal{D}(T)$ for a class of operators $T,$

and

(b) as the study of  $\mathcal{D}(T)$ for a fixed operator $T.$

Most of the relevant research has been concentrated on the first, easier problem,
but there have been several papers addressing the second problem as well.
 Clearly the
entries constituting diagonals of $T$ belong to the numerical range $W(T)$ of $T$, and thus the numerical range and its subsets appear
to be  very natural candidates for a characterization of at least a part of $\mathcal D(T)$.
However, the importance of numerical ranges for the study of diagonals was underlined only in
\cite{Herrero91}, and to some extent in \cite{Stout81} and \cite{Fan84}.

Motivated by a notorious problem from the theory of commutators,
P. Fan studied in \cite{Fan84} a problem of existence of zero diagonals in $\mathcal D(T).$
Using the properties of the essential numerical range $W_{\rm e}(T)$ of $T,$
he proved that a bounded linear operator $T$ on $H$  admits a
zero-diagonal, that is $(0) \in \mathcal D(T),$ if and only if there exists an orthonormal basis $(e_k)_{k=1}^\infty$
such that  $s_n:=\sum_{k=1}^{n} \langle T e_k, e_k \rangle, n \in \mathbb N,$
possess a subsequence $(s_{n_m})_{m=1}^{\infty}$ satisfying $s_{n_m} \to 0, m \to \infty.$
A number of related results in \cite{Fan84} suggested that
``the diagonal of an operator carries more information about the operator than its
relatively small size (compare to the "fat" matrix representation of the operator)
may suggest.'' This line of research was continued e.g. in  \cite{Fan87} and \cite{Fan94}.
By different methods, it has been shown recently in \cite{Loreaux16} that an infinite-rank idempotent
admits a zero-diagonal if and only if it is not  a Hilbert-Schmidt perturbation of a (selfadjoint) projection.

An systematic study of the set $\mathcal D(T)$ has been attempted  by Herrero in \cite{Herrero91}.
Addressing the challenging problem (b) above, Herrero showed that if $\{d_n\}^\infty_{n=1}$ belongs to the interior $\Int W_{\rm e}(T)$ of $W_{\rm e}(T)$
and $\{d_n\}^\infty_{n=1}$ has a limit point again in $\Int W_{\rm e}(T)$, then $\{d_n\}^\infty_{n=1} \in \mathcal D(T)$.
Otherwise, as shown in \cite{Herrero91}, there exists a
compact operator $K$ such that $\{d_n\}^\infty_{n=1} \in \mathcal D(T+K)$.  Finally, if ${\rm dist}\{d_n,W_{\rm e}(T)\}\to 0$ as $n\to\infty$
then there exists $\{d'_n\}^\infty_{n=1}$  such that $\{d'_n\}^\infty_{n=1} \in \mathcal D(T)$ and
$|d_n-d'_n|\to 0$ as $n\to\infty$. The results by Herrero motivated the research in this paper to a large extent.
In particular, the importance of Blaschke-type conditions was suggested by  analysis of diagonals for the unilateral shift in \cite{Herrero91}. See Section \ref{diagon} for more on that.
As remarked in \cite{Herrero91}, the numerical range results allow one to deduce easily Fong's theorem from \cite{Fong86}
saying that every bounded sequence  $(d_n)_{n=1}^\infty$ admits a nilpotent operator (of index $2$)  whose diagonal
is  $(d_n)_{n=1}^\infty$. Moreover, in the same way, one can derive a similar result from \cite{Loreaux16} replacing nilpotents by idempotents.
See Remark \ref{diagon_rem} for more on that.

The papers by Fan and Herrero were preceded by a deep article by Stout \cite{Stout81}, where the diagonals of $T$ appeared in a natural way in the study of Schur algebras
and where $\mathcal D(T)$ was also related to  $W_{\rm e}(T).$ Recall that  $0 \in W_{\rm e}(T)$ if (and only if) there exists $(d_n)_{n=1}^{\infty} \in \mathcal D(T)$ such that $(d_n)_{n=1}^{\infty} \in c_0(\mathbb N).$
At the beginning of $1970$s, Anderson proved in \cite{AndersonThesis} that $0 \in W_{\rm e}(T)$ is in fact equivalent to the existence of a $p$-summable sequence in $\mathcal D(T)$ for every $p>1.$ Extending Anderson's result, Stout discovered  that $0 \in W_e(T)$ yields
a stronger property:  for any $(\alpha_n)_{n=1}^{\infty}
\not \in \ell_1$ there is $(d_n)_{n=1}^{\infty} \in \mathcal D(T)$ such that $|d_n|\le \alpha_n$ for all $n.$
These results were crucial in the study of matrix and Schur norms for Hilbert space operators in \cite{Fong87}.
 In particular, by relating the size of entries of $M_T$ to $W_{\rm e}(T),$  the infima of maximum entry
norm and Schur norms of $M_T$ over all
choices of bases in $H$ were proved in \cite{Fong87} to be precisely ${\rm dist}\, \{0, \sigma_{\rm ess}(T)\},$
where $\sigma_{\rm ess}(T)$ stands for the essential spectrum of $T.$

 Comparatively recently, a relevant study of $\mathcal D(T)$ in a very general context of \emph{operator-valued} diagonals
  has been realized in \cite{Bourin03} by J.-C. Bourin. In particular, he proved in \cite[Theorem 2.1]{Bourin03} that if $W_{\rm e}(T)$  contains the open unit disc $\DD$, then for every sequence $\{C_n\}_{n\ge1}$ of  operators
  acting possibly on different Hilbert spaces and satisfying
    $\sup\limits_{n\ge1}\Vert C_n\Vert<1$, there exists a sequence of mutually orthogonal subspaces $(M_n)_{n\ge 1}$ of $H$ such that $\bigoplus_{n\ge 1}M_n=H$ and the compressions $P_{M_n} T\vert_{M_n}$ of $T$ are unitarily equivalent to $C_n$ for all $n\ge1.$ The operators $\bigoplus_{n \ge 1} \bigl(P_{M_n} T\vert_{M_n}\bigr),$ called ``pinchings'' of $T$ in \cite{Bourin03}, can be considered as  operator counterparts of elements from $\mathcal D(T),$ and they essentially coincide with those  elements when $C_n$ act on one-dimensional spaces. Note that pinching results appeared to be useful in the
  study of $C^*$-algebras, see e.g. \cite{Bourin16} for more on that. For recent generalizations of these results see \cite{Muller19}.

A rather general but unfortunately only approximate result on diagonals has been obtained by
Neumann in \cite{Neumann99} where, in particular,
 the $\ell^\infty$-closure of $\mathcal D(T)$ for (bounded) selfadjoint $T$
was identified with a certain convex set. More precisely,
let $T$ be a bounded selfadjoint operator on $H,$ and let
$t_-=\inf \sigma_{\rm ess}(T)$ and $t_+=\sup \sigma_{\rm ess}(T).$ Then
$T=T_- + T_0 + T_+,$ where $T_+$ and $T_-$ are compact selfadjoint operators with positive and negative spectra
respectively. Denoting ${\bf t}_+\  ({\bf t}_-)$ the diagonal of $T_+\  (T_-)$
with respect to some orthonormal basis, it was shown in \cite{Neumann99}  that
the closure of  $\mathcal D(T)$ in $\ell^{\infty}(\mathbb N)$ can be described as
$$
\overline{{\rm conv}\, S {\bf t}_-} +[t_-,t_+]^{\mathbb N} + \overline{{\rm conv}\, S {\bf t}_+},
$$
where $S$ is a group of permutations (bijections) of $\mathbb N.$
As shown in \cite{Neumann99},  this result yields, in particular, a description of closed convex invariant subsets for selfadjoint operators.
The results of a similar and more general nature have appeared recently in \cite{Kennedy15}.

A different perspective  has been opened since Kadison's striking results on diagonals of selfadjoint projections, addressing the problem (a).
Generalizing the classical  Schur-Horn theorem for matrices, Kadison proved  in \cite{Kadison02a, Kadison02b}
that a sequence $(d_n)_{n=1}^\infty$ is
a diagonal of some \emph{selfadjoint} projection if and only if it takes values in $[0, 1]$ and if
the sums $a := \sum_{d_j < 1/2} d_j$ and $b:=\sum_{d_j \ge 1/2} (1-d_j)$ satisfy either $a+b=\infty$ or $a+b <\infty$ and $a-b \in \mathbb Z.$
Illustrating the sharpness of Kadison's result and drawbacks of Neumann's approximate description, we note
that the Neumann's theorem would produce all positive sequences with elements between $0$ and $1.$
The Kadison integer condition was recognized as the Fredholm index obstruction in a subsequent paper by Arveson \cite{Arveson07},
where Kadison's dichotomy was studied for normal operators with finite spectrum.
 (Note also \cite{Kaftal17}, where the integer was identified with so-called essential codimension of a pair of projections.)
More precisely,
Arveson considered a more general task of describing the diagonals
for normal operators  $N(X)$ with the essential spectrum $\sigma_{\rm e}(T) =\sigma(T)=X$ and with $X$ being the set of vertices of a convex polygon $P\subset\mathbb C.$ He defined  the set  ${\rm Lim}^1(X)$ of ``critical'' sequences $d=(d_n)_n\in l^\infty(\Bbb N)$ whose limit points belong to $X$ and moreover such that the elements of $d$ converge rapidly to these limits in the sense that $\sum_{n=1}^\infty {\rm dist}(d_n,X)<\infty$,
that is $(d_n)_{n=1}^\infty$ satisfies the analogue of the classical Blaschke condition. He showed that there is a discrete group $\Gamma_X$
depending
only on the arithmetic properties of $X,$ and a surjective mapping
$d \mapsto s(d)\in \Gamma_X$ of the set of all such sequences $d$ such that if $s(d) \neq 0,$ then $d$ is not the diagonal of any
operator in $N(X).$ Moreover, it appeared that  this is the only
obstruction in the case of two-point sets, but also that there are other  obstructions in
the case of three-point sets. The case of three and more points set was settled very recently in
\cite{Loreaux19} and we refer to this paper for very interesting details.
We also refer to \cite{Argerami15} containing an illuminating discussion of Kadison's theorem.

The research started by Kadison and Arveson gave rise to an intensive activity around diagonals of operators.
In particular,
the set $\mathcal D(T)$ was characterized for several  classes of $T$ (bearing certain resemblance
to the selfadjoint situation)
and moreover several results (e.g. Neumann's $l^\infty$-closure result discussed above) were extended to the setting of von Neumann algebras and tuples of elements, sometimes obtaining new statements in the case of operators as well, see e.g.
\cite{Argerami13}, \cite{Kennedy15}, \cite{Massey16}, and \cite{Kennedy17} and the references therein.
Without mentioning all the substantial contributions,
we give only a few samples related to the above discussion.
Pairs of null real sequences realized as sequences of eigenvalues and diagonals of positive compact operators were characterized in \cite{Kaftal10}.
A description of diagonals for selfadjoint operators $T$ with finite spectrum was recently given in \cite{Bownik15} (see also \cite{Bhat14}).
Note that in this case the description of $\mathcal D (T)$ though explicit becomes rather technical and has an involved
formulation.
Very recently, $\mathcal D(T)$  for a class of unitary operators $T$ was described in  \cite{Loreaux18}.
In particular, it was shown in \cite{Loreaux18} that
a complex-valued sequence $(d_n)_{n=1}^\infty$
is a diagonal of some unitary operator on $H$ if and only if $\sup_{n \ge 1} |d_n|\le 1 $
and
\begin{equation}\label{diag_unit}
2(1-\inf_{n \ge 1} |d_n|) \le \sum_{n=1}^{\infty}(1-|d_n|).
\end{equation}
One should also mention the applications to frame theory where the diagonals arise e.g. as sequences of
frame norms. For this direction of research one may consult e.g. \cite{Antezana07}, \cite{Bownik11} and \cite{Bownik15a}.
For several other related results, we refer to a recent survey \cite{Weiss14}.

In this paper we study the diagonals of bounded operators from the perspective of numerical ranges and spectrum.
Being inspired by Herrero's work \cite{Herrero91} and by Arveson's considerations in \cite{Arveson07},
we assume that the diagonal belongs to the interior of $W_{\rm e}(T)$
and we introduce
\emph{a Blaschke-type} condition
$$
\sum_{n=1}^{\infty} {\rm dist} \, \{d_n, \partial W_{\rm e} (T)\}=\infty
$$
on the size of diagonal $(d_n)_{n=1}^\infty$ near the boundary of
$W_{\rm e}(T).$  (The terminology originates from an \emph{opposite} condition from complex analysis
concerning zeros of bounded analytic functions in the unit disc. Rather then writing ``non-Blaschke'' here and in the sequel
we have decided to name the condition above ``Blaschke-type''.)
Note that one can easily recognize ``Blaschke's'' component in \eqref{diag_unit}, however the condition is more subtle.

Given $T \in B(H),$ this set-up helps us to suggest a general and new method for constructing
a big part of diagonals that works in a variety of different settings, including operator tuples  and operator-valued diagonals.
Moreover, for commuting tuples it allowed us to use spectral properties of $T$ for constructing diagonals
for power tuples.
Thus, apart from substantially generalizing the results by Herrero and Bourin,
we propose an approach that unifies and extends existing results  and does not depend on specific properties of $T$
(e.g. as being selfadjoint or unitary). We would like to stress that, dealing with problem (b) above,
 we work with fixed operators rather than operator classes thus
dealing with a more demanding and involved task.
On the other hand, the drawback of our framework is that
working with a fixed operator we are still far from giving a full description of $\mathcal D(T)$ for a fixed $T$,
 and results similar to Kadison's ones are out of reach.
 Although we suspect a characterization of $\mathcal D(T)$ is hardly possible in such a generality, see also Section
 \ref{diagon} for a discussion of necessary conditions for diagonals and some classes of $T$
 when Blaschke-type assumptions help to obtain  exhaustive characterizations.

Our first two main results given below provide Blaschke-type conditions
for a sequence from the essential numerical range  of a tuple $\cT$ to be a diagonal of $\cT$.
To treat degenerate cases, we deal with the notion of  interior $\Int_M$ relative to an affine real subspace $M$ in $\mathbb C^n$  first, and the result for interior in $\mathbb C^n$ itself
arises as a corollary.

\begin{theorem}\label{T1.1}
Let $\cS=(S_1,\dots,S_s)\in B(H)^s$ be an $s$-tuple of selfadjoint operators, and let $(\la_k)_{k=1}^\infty\subset\Int_{\RR^s} W_{\rm e}(\cS)$ satisfy
\begin{equation}\label{blaschke}
\sum_{k=1}^\infty\dist\{\la_k,\RR^s\setminus W_{\rm e}(\cS)\}=\infty.
\end{equation}
Then $(\la_k)_{k=1}^\infty \in\mathcal D(\cS)$.
\end{theorem}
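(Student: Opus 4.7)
The plan is to build the orthonormal basis $(e_k)_{k=1}^\infty$ inductively so that $\langle S_i e_k,e_k\rangle=\la_k^{(i)}$ holds exactly at every step, and so that the closed linear span is all of $H$. Given orthonormal $e_1,\dots,e_n$ with the correct diagonal entries, one seeks $e_{n+1}$ in $F_n^\perp$, where $F_n:=\mathrm{span}(e_1,\dots,e_n)$, with $\langle\cS e_{n+1},e_{n+1}\rangle=\la_{n+1}$. Since $F_n$ is finite-dimensional one has $W_{\rm e}(\cS|_{F_n^\perp})=W_{\rm e}(\cS)$, and by assumption $\la_{n+1}\in\Int_{\RR^s}W_{\rm e}(\cS)$; this freedom is exploited via a geometric argument in the joint essential numerical range.

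The technical core is an \emph{exact realization lemma}: for any finite-dimensional $F\subset H$ and any $\mu\in\Int_{\RR^s}W_{\rm e}(\cS)$ with slack $\de:=\dist\{\mu,\RR^s\setminus W_{\rm e}(\cS)\}$, there exists a unit vector $v\in F^\perp$ with $\langle\cS v,v\rangle=\mu$, and moreover the set of such admissible $v$ has ``rotational freedom'' inside the unit sphere of $F^\perp$ of magnitude proportional to $\de$. I would prove this by choosing $s+1$ affinely independent $\mu_1,\dots,\mu_{s+1}\in W_{\rm e}(\cS)$ whose simplex contains $\mu$ in its interior, realizing each $\mu_j$ as a joint numerical value on an almost-spectral unit vector $w_j\in F^\perp$ (arranged so that $\langle S_i w_j,w_k\rangle\approx 0$ for $j\ne k$), orthonormalizing by Gram--Schmidt, and combining as $v=\sum_j c_j w_j$ with $\sum_j c_j^2=1$; the weights are tuned so that $\sum_j c_j^2\langle\cS w_j,w_j\rangle=\mu$, the unavoidable cross terms being absorbed by a final small perturbation of the $\mu_j$'s within the slack $\de$.

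To secure completeness, interleave the construction with a fixed countable dense set $\{g_m\}\subset H$: at designated steps, pick $e_{n+1}$ from the level set furnished by the realization lemma so as to add a definite fraction of the component of $g_m$ outside the current span. Per-step progress scales with $\de_{n+1}$, so the Blaschke-type divergence $\sum_k\de_k=\infty$ supplies enough cumulative freedom to capture every $g_m$ after finitely many steps, whence the basis is complete. The main obstacle is the quantitative form of the realization lemma: controlling the cross terms $\langle S_i w_j,w_k\rangle$ uniformly in $F$, and balancing the two competing uses of the slack $\de_k$ — exact attainment of $\la_k$ versus directional progress toward $g_m$ — so that the series \eqref{blaschke} is fully exploited.
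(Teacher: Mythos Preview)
Your overall strategy matches the paper's: build the basis inductively with exact diagonal entries, interleave with a dense sequence $(y_m)$, and exploit the slack $\de_k=\dist\{\la_k,\RR^s\setminus W_{\rm e}(\cS)\}$ quantitatively so that the Blaschke divergence forces completeness. The paper also makes your interleaving precise by partitioning $\NN$ into sets $A_m$, each carrying infinite Blaschke mass, so that infinitely many steps are dedicated to absorbing each fixed $y_m$.

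Where the paper is cleaner is in the realization step. Instead of a simplex lemma with $s+1$ approximate realizers and cross-term bookkeeping, the paper cites the known inclusion $\Int_{\RR^s} W_{\rm e}(\cS)\subset W_\infty(\cS)$ (Proposition~\ref{P4.1}), which already gives \emph{exact} realization on any finite-codimension subspace. The inductive step is then a two-vector construction that merges your two competing goals into one: write the unit residual of $y_m$ modulo the current span as $b$, set $\rho=\|\langle\cS b,b\rangle-\la_N\|$ and $\de=\tfrac12\dist\{\la_N,\RR^s\setminus W_{\rm e}(\cS)\}$, and choose $\mu$ on the ray from $\langle\cS b,b\rangle$ through $\la_N$ at distance $\de$ beyond $\la_N$. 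Then pick a single unit vector $x\perp M_{N-1},\,b,\,S_1b,\dots,S_sb$ with $\langle\cS x,x\rangle=\mu$ and set
\[
u_N=\sqrt{\tfrac{\rho}{\rho+\de}}\,x+\sqrt{\tfrac{\de}{\rho+\de}}\,b.
\]
The finitely many constraints $x\perp S_jb$ kill all cross terms, so $\langle\cS u_N,u_N\rangle=\la_N$ exactly, while $|\langle u_N,b\rangle|^2=\tfrac{\de}{\rho+\de}$ yields the logarithmic decrement $\ln\dist^2\{y_m,M_N\}\le\ln\dist^2\{y_m,M_{N-1}\}-\tfrac{\de}{\rho+\de}$. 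This completely dissolves your stated main obstacle: there is no tension between exact attainment and directional progress, because the vector $b$ itself is built into $u_N$, and the only cross terms that arise are removed by finitely many linear conditions on $x$ rather than by a perturbation argument.
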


Theorem \ref{T1.1} has a counterpart for $\cT=(T_1,\dots,T_n)\in B(H)^n.$
Note that in a natural way $W(\cT)$ can be identified with a subset of $\mathbb R^{2n}.$
If $\cT \in B(H)^n$ then,
assuming that $M\subset\mathbb R^{2n}$ is the smallest affine subspace  containing $W(\cT)$, we conclude
in Corollary \ref{complex} below that any $(\la_k)_{k=1}^{\infty} \subset\Int_M W_{\rm e}(\cT)$ satisfying
$$
\sum_{k=1}^\infty \dist\{\la_k,M\setminus W_e(\cT)\}=\infty,
$$
belongs to $\mathcal D(\cT)$.

Using Theorem \ref{T1.1} in the context of power tuples $(T, \dots, T^n)$ for $T\in B(H)$, and thus being able to invoke
the notion of spectrum, we  show that
for every $(\la_k)_{k=1}^\infty\subset\Int\widehat\si(T)$ satisfying  $\sum_{k=1}^\infty \dist^n\{\la_k,\partial\widehat\si(T)\}=\infty$ the sequence $(\la_k,\la_k^2,\dots,\la_k^n)$
 belongs to $\mathcal D (T, \dots, T^n).$ We are not aware of similar statements in the literature, although some related statements
 can be found in \cite{Muller19}.

\begin{theorem}\label{T2.1}
Let $\cT=(T_1,\dots,T_n)\in B(H)^n$. For every $(\la_k)_{k=1}^{\infty}\subset W_{\rm e}(\cT)$
and every $(\al_k)_{k=1}^{\infty}\notin\ell_1$  there exists an orthonormal basis $(u_k)_{k=1}^{\infty}$ in $H$ such that
\begin{equation}\label{stout}
\bigl\|\langle \cT u_k,u_k\rangle-\la_k\bigr\|\le |\al_k|
\end{equation}
for all $k\in\NN$.
\end{theorem}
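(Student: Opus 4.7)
The proof rests on the standard characterization of the essential numerical range of a tuple, which I will use repeatedly: $\la\in\CC^n$ belongs to $W_{\rm e}(\cT)$ iff for every finite-dimensional subspace $F\subset H$ and every $\e>0$ there exists a unit vector $u\perp F$ with $\|\langle\cT u,u\rangle-\la\|<\e$. Iterating, one can select orthonormal families of any finite cardinality in the orthogonal complement of any finite-dimensional subspace, with each individual vector approximating any prescribed value in $W_{\rm e}(\cT)$ to any prescribed accuracy.

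The plan is to construct $(u_k)$ block-by-block, exploiting the non-summability $\sum|\al_k|=\infty$ to convert aggregate slack into a uniform geometric decay of basis residuals. First I partition $\NN$ into consecutive finite blocks $B_m$ with $\sum_{k\in B_m}|\al_k|\ge 1$, fix a reference orthonormal basis $(f_j)_{j=1}^\infty$ of $H$, and fix a surjection $m\mapsto j(m)$ hitting every positive integer infinitely often. After blocks $B_1,\dots,B_{m-1}$ have been processed, producing $V_{m-1}={\rm span}(u_k:k<\min B_m)$, I let $\hat g_m$ be the normalized projection of $f_{j(m)}$ onto $V_{m-1}^\perp$, and within block $B_m$ I set
\[
u_k=b_k\hat g_m+c_k v_k,\qquad |b_k|^2+|c_k|^2=1,\qquad b_k^2=\min\bigl(|\al_k|/(4\|\cT\|),1\bigr),
\]
where $v_k$ is a unit vector in the orthogonal complement of $V_{m-1}$, $\hat g_m$, $T_i\hat g_m$ and $T_i^*\hat g_m$ for $i=1,\dots,n$; the last two orthogonality conditions annihilate the cross terms in $\langle\cT u_k,u_k\rangle$. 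Using the $W_{\rm e}$-characterization inside this finite-codimension subspace I select $v_k$ so that $\|\langle\cT v_k,v_k\rangle-\la_k\|<|\al_k|/2$, and I arrange the orthonormality of $(u_k)_{k\in B_m}$ by prescribing the off-diagonal Gram entries $\langle v_k,v_{k'}\rangle=-b_k\overline{b_{k'}}/(c_k\overline{c_{k'}})$ for $k\ne k'$; the resulting Gram matrix is positive-definite for our choice of $b_k$ and hence realizable inside the infinite-dimensional orthogonal complement. A direct expansion then yields the diagonal estimate $\|\langle\cT u_k,u_k\rangle-\la_k\|\le|\al_k|$.

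The decisive gain is the uniform lower bound $\sum_{k\in B_m}b_k^2\ge 1/(4\|\cT\|)$, so that the orthogonal projection of $\hat g_m$ onto ${\rm span}(u_k:k\in B_m)$ has squared norm bounded below by an absolute constant, and consequently $\|(I-P_{V_m})f_{j(m)}\|$ shrinks by a fixed factor $\rho<1$ in each block. Since each $j$ is targeted infinitely often by $(j(m))$, the residual of every $f_j$ tends to zero and $(u_k)$ is therefore an orthonormal basis of $H$. The main obstacle is the within-block orchestration---simultaneous enforcement of the diagonal condition, orthonormality of the $u_k$'s via the prescribed Gram matrix on the $v_k$'s, and cross-term cancellation---but each of these reduces to a positive-definiteness check for an explicit $|B_m|\times|B_m|$ matrix together with a routine iterated application of the $W_{\rm e}$-characterization inside a finite-codimension subspace.
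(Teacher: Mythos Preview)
Your block approach has the right spirit, but there is a genuine gap in the within-block construction. You need unit vectors $v_k$ in a fixed finite-codimension subspace that satisfy \emph{simultaneously}
\[
\bigl\|\langle\cT v_k,v_k\rangle-\la_k\bigr\|<\tfrac{|\al_k|}{2}
\qquad\text{and}\qquad
\langle v_k,v_{k'}\rangle=-\frac{b_k\overline{b_{k'}}}{c_k\overline{c_{k'}}}\quad(k\ne k'),
\]
and you justify the second family of constraints by observing that the target Gram matrix $G$ is positive-definite and ``hence realizable''. Positive-definiteness indeed yields \emph{some} system of unit vectors with Gram matrix $G$, but it says nothing about the diagonals $\langle\cT v_k,v_k\rangle$. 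The $W_{\rm e}$-characterization lets you build an orthonormal family with prescribed approximate diagonals; it does not let you prescribe non-zero inner products with previously chosen vectors while still controlling the diagonal. If you attempt a sequential construction, writing $v_k=p_k+q_k$ with $p_k$ forced by the inner-product constraints and $q_k$ free in the orthogonal complement, then $\langle\cT p_k,p_k\rangle$ contributes an uncontrolled term of size $O(\|p_k\|^2\|\cT\|)$, and $\|p_k\|^2$ is governed by $G$ rather than by $|\al_k|$. So the claimed bound $\|\langle\cT u_k,u_k\rangle-\la_k\|\le|\al_k|$ does not follow. (There is also a secondary consistency issue: with $\beta_k=b_k/c_k$ one has $G=I+\diag(\beta_k^2)-\beta\beta^T$, which is positive-definite iff $\sum_{k\in B_m}b_k^2<1$; yet your ``decisive gain'' is the lower bound $\sum_{k\in B_m}b_k^2\ge 1/(4\|\cT\|)$, and both together fail for small $\|\cT\|$ and need extra bookkeeping for large $|\al_k|$.)

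The paper's proof sidesteps all of this by working one index at a time rather than in blocks. Its $A_m$'s are not consecutive intervals but an arbitrary partition of $\NN$ with $\sum_{k\in A_m}|\al_k|=\infty$; at step $N$ with $N\in A_m$ one takes the \emph{current} normalized residual $b$ of $y_m$ modulo $\bigvee_{k<N}u_k$, chooses a single $v\perp M_{N-1},b,T_jb,T_j^*b$ with $\|\langle\cT v,v\rangle-\la_N\|\le|\al_N|/2$, and sets $u_N=cb+\sqrt{1-c^2}\,v$. Because $b$ is recomputed at every step, each $u_N$ involves only one fresh vector orthogonal to everything already built, so no Gram-matrix constraint ever appears, and the non-summability enters through the divergence of $\sum_{k\in A_m}c_k^2$ rather than through a block sum. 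Adapting your scheme to update $\hat g_m$ after each $u_k$ (instead of fixing it for the whole block) would bring you exactly to the paper's argument.
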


Theorem \ref{T2.1} yields the existence of diagonals for compact perturbations of tuples
that satisfy weakened Blaschke-type conditions.
In particular, if $p>1$ and $(\la_k)_{k=1}^{\infty}\subset \CC^n$ satisfy
$\sum_{k=1}^\infty \dist^p\{\la_k,W_{\rm e}(\cT)\}<\infty$, then
there exists an $n$-tuple of operators $\cK=(K_1,\dots,K_n)$ with $K_j, 1 \le j \le n,$ from the Schatten class $S_p$ such that
$(\la_k)_{k=1}^{\infty} \in \mathcal D (\cT+\cK)$. Our results on compact perturbations
lead to several characterizations of the subset $\mathcal D_{\rm const}(T)$ of $\mathcal D(T)$ consisting of constant diagonals.
The problem of understanding the structure of $\mathcal D_{\rm const}(T)$ has been raised in \cite{Bourin03},
although there are other related results in the literature. The most interesting question on convexity of
$\mathcal D_{\rm const}(T)$ remains still unanswered.

It is curious that the same type of technique yields the results in the context of operator diagonals
thus extending Bourin's results from \cite{Bourin03} mentioned above to the setting of tuples
and replacing his uniform contractivity condition on operator diagonal by a more general assumption of Blaschke's type.
In particular, the following statement holds.

\begin{theorem}\label{T3.1}
Let $T\in B(H)$ with $W_{\rm e}(T)\supset \overline {\DD}$. Let $L_k, k\in\NN,$ be separable Hilbert spaces (finite or infinite-dimensional) and  let $C_k\in B(L_k)$ be contractions satisfying $\sum_{k=1}^\infty (1-\|C_k\|)=\infty$. Then there exist projections $P_{K_k}, k \in \mathbb N,$ onto mutually orthogonal subspaces $K_k\subset H$ such that $\bigoplus_{k=1}^\infty K_k=H$ and
the compression $P_{K_k}T\vert_{K_k}$
is unitarily equivalent to $C_k$
for all $k\in\NN$.
\end{theorem}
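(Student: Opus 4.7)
The plan is to construct the subspaces $K_k$ inductively, one at a time, adapting to the operator-valued setting the Blaschke-type technique that underlies Theorems~\ref{T1.1} and~\ref{T2.1}. First I would establish a \emph{single-block} version of the statement: given $T\in B(H)$ with $W_{\rm e}(T)\supset\overline{\DD}$, a contraction $C$ on a separable Hilbert space $L$, and a subspace $H'\subset H$ of finite codimension, there should exist $K\subset H'$ with $K\cong L$ such that the compression $P_K T|_K$ is unitarily equivalent to $C$, and moreover the essential numerical range of $T$ restricted to $H\ominus K$ should still contain a slightly smaller closed disc centered at $0$, with the loss controlled by $1-\|C\|$. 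For strict contractions $\|C\|<1$ this is essentially Bourin's original single-compression construction \cite[Thm.~2.1]{Bourin03}; in general a limiting argument analogous to the one used to prove Theorem~\ref{T1.1} should take care of the boundary.

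Granted the single-block lemma, the main body of the argument is a back-and-forth induction. I would fix an orthonormal basis $(e_n)_{n\ge 1}$ of $H$, and at odd stages apply the single-block lemma to construct $K_k$ in the orthogonal complement of the previously built blocks with $P_{K_k}T|_{K_k}\cong C_k$; at even stages, I would enforce the exhaustion $\bigoplus_{k\ge 1}K_k=H$ by arranging that the next $e_n$ be absorbed into the span of the current blocks (e.g.\ by appending a suitable vector to one of the $K_j$'s via a small modification of the single-block argument). The role of the Blaschke-type hypothesis $\sum(1-\|C_k\|)=\infty$ is then to guarantee that the cumulative shrinkage of the effective essential numerical range, whose per-step loss is of order $1-\|C_k\|$, remains far from exhausting $\overline{\DD}$---directly parallel to the role of the divergence \eqref{blaschke} in Theorem~\ref{T1.1}.

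The main obstacle, I expect, is the near-boundary case $\|C_k\|\to 1$, which Bourin's stronger hypothesis $\sup_k\|C_k\|<1$ was designed to circumvent via a single uniform dilation. Under the weaker Blaschke-type hypothesis no such uniform dilation is available, so each $C_k$ must be embedded individually in an essentially tight way, and the residual $W_{\rm e}$ of the complement must be carefully tracked block by block. The technically delicate heart will be this quantitative control, which I expect to mirror the summation manipulations in the proofs of Theorems~\ref{T1.1} and~\ref{T2.1} very closely---the point being that the only identity one can exploit globally is precisely $\sum_k (1-\|C_k\|)=\infty$.
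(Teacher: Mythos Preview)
Your proposal has two genuine gaps, both tied to a misreading of the role of the Blaschke hypothesis.

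First, the single-block lemma as you state it is not true. When $L$ (and hence $K$) is infinite-dimensional, passing to $H\ominus K$ can destroy the essential numerical range entirely; there is no reason $W_{\rm e}\bigl(P_{H\ominus K}T\vert_{H\ominus K}\bigr)$ should contain any disc at all, let alone one whose radius differs from $1$ by something of order $1-\|C\|$. (Take $T=S\oplus 0$ with $W_{\rm e}(S)=\overline{\DD}$ and $K$ the first summand.) The paper does not track a shrinking disc. Instead it \emph{pre-splits}: Lemma~\ref{orthogonal} produces infinitely many mutually orthogonal subspaces $H_m\subset H$, each with $W_{\rm e}\bigl(P_{H_m}T\vert_{H_m}\bigr)\supset\overline{\DD}$, and $K_N$ is then built inside $H_N$ up to one extra vector. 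The disc never shrinks; each step draws on a fresh reservoir.

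Second, and consequently, the divergence $\sum_k(1-\|C_k\|)=\infty$ is not used to keep a disc from collapsing. It is used exactly as in Theorem~\ref{T1.1}: to force $\dist\bigl\{y_m,\bigvee_{k\le N}K_k\bigr\}\to 0$ for every $m$, via a logarithmic decrement at each step. The mechanism that produces this decrement is the heart of the proof and is missing from your sketch. Given the residual direction $b$ of $y_m$, one does not realise $C_N$ directly; one first realises, via Bourin's theorem inside the reservoir $H_N$, a deliberately perturbed operator $C_N'$ (still a strict contraction, with $\|C_N'\|\le\|C_N\|+\tfrac{1}{2}(1-\|C_N\|)<1$), the perturbation being chosen so that after \emph{tilting one basis vector} of the realising subspace by $\rho b$, with $\rho^2\sim(1-\|C_N\|)/\|T\|$, the compression to the tilted subspace $K_N$ becomes unitarily equivalent to $C_N$ itself. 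That tilt buys the factor $(1-\rho^2)$ in $\dist^2\{y_m,\cdot\}$. Your proposed even-stage absorption, appending a vector to an already constructed $K_j$, would destroy the unitary equivalence $P_{K_j}T\vert_{K_j}\usim C_j$ and cannot work as described; the absorption has to be woven into the construction of $K_N$ from the start, via the $C_N\mapsto C_N'$ modification.
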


(Note that since $W_{\rm e}(T)$ is compact, in the formulation above, one may replace the closure of ${\DD}$ by $\DD$ itself ,
and the choice of $\overline {\DD}$ is just a matter of taste.)

The methods used in the proof of Theorem \ref{T3.1} appeared to be quite fruitful in the study of power tuples.
Under natural spectral assumptions, they allowed us also to construct operator diagonals of power tuples $(T,\dots,T^n)$
consisting for power tuples of contractions $(C_k,\dots,C_k^n)$ where $(C_k)_{k=1}^{\infty}$ satisfy an analogue for tuples of Blaschke's condition in Theorem \ref{T3.1}.
 More precisely,
assuming that $\widehat\si(T)\supset\overline{\DD}$, $n\in\NN$, and  $L_k, k \in \mathbb N,$ are  separable Hilbert spaces, we prove that  for any contractions
$C_k\in B(L_k), k \in \mathbb N,$ satisfying $\sum_{k=1}^\infty (1-\|C_k\|)^n=\infty$
one can find mutually orthogonal subspaces $K_k\subset H$ such that
$
H=\bigoplus_{k=1}^\infty K_k
$
and $P_{K_k}(T, \dots, T^n)\vert_{K_k}$ is unitarily equivalent to $(C_k,\dots,C_k^n)$
for all $k\in\NN$.

The result has a flavor of (finite) power dilations. Note however that its proof relies on the classical unitary power dilation for a Hilbert space
contraction.

\section{Notation}\label{nota}
It will be convenient to fix some notations in a separate section. In particular, we let
$H$ be an infinite-dimensional complex separable Hilbert space with the inner product $\langle\cdot ,\cdot \rangle,$ and $B(H)$ the space of all bounded linear operators on $H$.
For a bounded linear operator $T$ we denote by $\sigma (T)$ its spectrum, by $W(T)$ its numerical range, and by $N(T)$ its kernel.
For a set $M$ and $H_m\subset H, m\in M,$ denote by $\bigvee_{m \in M} H_m$ the smallest (closed) subspace containing all of $H_m.$

In the following we consider an $n$-tuple $\mathcal T=(T_1,\dots,T_n)\in B(H)^n$. Note that we do not in general
assume that the operators $T_j$ commute. For $x,y\in H$ we write shortly
$$\langle \mathcal Tx,y\rangle= (\langle T_1x,y\rangle,\dots,\langle T_nx,y\rangle)\in\CC^n  \quad \text{and} \quad {\mathcal T}x=(T_1x,\dots,T_nx)\in H^n.$$
Similarly for $\la=(\la_1,\dots,\la_n)\in\CC^n$
we write $\mathcal T-\la=(T_1-\la_1,\dots,T-\la_n)$ and
\begin{equation}\label{lam}
\|\la\|=\max\{|\la_1|,\dots,|\la_n|\}.
\end{equation}
If  $\mT=(T_1,\dots, T_n)\in B(H)^n$ and $R, S \in B(H)$ then we define
\begin{equation}\label{multip}
 R \mT  S :=(R T_1 S, \dots, R T_n S).
\end{equation}
 For a subspace $M$ of a Hilbert space $H$  we denote by $P_M$ the orthogonal projection onto $M.$

If $\mT=(T_1,\dots, T_n)\in B(H)^n$ and $\mS=(S_1,\dots, S_n)\in B(H')^n$ are $n$-tuples of operators,
then we say that $\mT$ and $\mS$ are unitarily equivalent and write $\mT\usim\mS$ if there exists a unitary operator $U:H\to H'$ such that $T_j=US_jU^{-1}$ for every $j=1,\dots,n.$

For $\mT=(T_1,\dots, T_n)\in B(H)^n$ we denote by $\mathcal D (\mathcal T)\subset \ell_\infty(\NN, \mathbb C^n)$ its set of diagonals. In other words,
$$
\mathcal D (\mathcal T)=\left \{(\langle T_1 e_n, e_n\rangle, \dots,\langle T e_n, e_n \rangle)_{n=1}^\infty \right\}
$$
when  $(e_n)_{n=1}^\infty$ varies through the set of  all orthonormal bases of $H.$
It will be important to distinguish a subset of $\mathcal D (\mathcal T)$ consisting of "constant" diagonals. Define
$$
\mD_{\rm const} (\cT)=\{
\la\in\CC^n: (\la,\la,\dots)\in \mD (\cT)\}.
$$

For a closed set $K \subset \mathbb C^n $ we denote by $\partial K$ the topological boundary of $K$,
by ${\rm conv}\, K$ the convex hull of $K,$ and by $\widehat K$
the polynomial hull of $K$. Recall that if $K\subset\CC$ is compact, then $\widehat K$ is the union of $K$ with all bounded components of the complement $\CC\setminus K$.
For $K\subset\CC^n$ denote by ${\rm Int} \, K$ the interior of $K$. If $K\subset M\subset\CC^n$ then denote
by ${\rm Int}_M \, K$ the relative interior with respect to $M \subset \mathbb C^n$ with the induced topology.
By an affine subspace $M\subset \mathbb R^n$ we mean, as usual, any set of the form $a+M_0,$
where $a$ is a fixed vector from $\mathbb R^n,$ and $M_0$ is a subspace of $\mathbb R^n.$

Finally, we let $\mathbb T$ stand for the unit circle $\{\lambda \in \mathbb C: |\lambda|=1\}$, $\mathbb D$ for the unit disc $\{\lambda\in\CC: |\lambda|<1\}$ and $\mathbb R_+$ for $[0,\infty).$ For $\rho>0,$ write $\TT_\rho=\{z\in\CC: |z|=\rho\}$.

\section{Preliminaries}\label{prelim}
We start with recalling certain basic notions and facts from the spectral theory of operator tuples on Hilbert spaces.
They can be found e.g. in \cite[Chapters 2-3]{Muller07}.

Let $\mathcal T=(T_1,\dots,T_n)\in B(H)^n$ be an $n$-tuple of commuting operators.
Recall that  its  joint (Harte) spectrum $\sigma(\mathcal T)$ can be defined as the complement of the set of those $\lambda=(\la_1,\dots,\la_n) \in\mathbb C^n$ for which
$$
\sum_{j=1}^n L_j(T_j-\lambda_j)=\sum_{j=1}^{n}(T_j-\lambda_j)R_j=I
$$
for some $L_j, R_j, 1 \le j \le n,$ from the algebra $B(H).$
It is well-known that $\sigma({\mathcal T})$ is a non-empty compact subset of $\mathbb C^n.$
One can define the joint essential spectrum $\si_e(\mathcal T)$ as the (Harte) spectrum of the $n$-tuple $(T_1+{\mathcal K}(H),\dots,T_n+{\mathcal K}(H))$ in the Calkin algebra $B(H)/{\mathcal K}(H)$, where ${\mathcal K}(H)$ denotes the ideal of all compact operators on $H$.
This definition of $\sigma_{\rm e}(\mathcal T)$ is rather implicit. Thus
it is  helpful to consider the joint essential approximate point spectrum
$\sigma_{\pi {\rm e}}(\mathcal T) \subset \si_{{\rm e}}(\mathcal T)$ defined as the set of all $\la=(\la_1,\dots,\la_n)\in\CC^n$ such that
$$\inf_{x\in M,\|x\|=1}\sum_{j=1}^n\|(T_j-\la_j)x\|=0$$
for every subspace $M\subset H$ of finite codimension.
 The set $\si_{\pi {\rm e}}({\mathcal T})$ is quite a big part of $\si_{{\rm e}}({\mathcal T})$ so that the polynomial convex hulls $\widehat \sigma_{\rm e}({\mathcal T})$ and $\widehat \sigma_{\pi {\rm e} }(\mathcal T)$ coincide.
Note that if $n=1,$ then
$\si_{\rm e}(T_1)=\{\la_1\in\CC: T_1-\la_1\hbox{ is not Fredholm}\}$
and
$\sigma_{\pi,e}(T_1)=\{\la_1\in \CC: T_1-\la_1 \hbox{ is not
upper-semi-Fredholm}\}.$
For $T\in B(H)$ and $\mathcal T=(T,T^2, \dots, T^n) \in B(H)^n,$ one has $\sigma (\mathcal T)=\{(\lambda, \dots, \lambda^n): \lambda \in \sigma (T)\}$ and
$\sigma_{\rm e} (\mathcal T)=\{(\lambda, \dots, \lambda^n): \lambda \in \sigma_e (T)\}$, and the same property holds for $\sigma_{\pi {\rm e}}(T).$
Denote by $\si_p(\mathcal T)$ the point spectrum of $\mathcal T$, i.e., the set of all $n$-tuples $\lambda=(\la_1,\dots,\la_n)\in\CC^n$ such that $\bigcap_{j=1}^n N(T_j-\la_j)\ne\{0\}$. If $ x \in \bigcap_{j=1}^n N(T_j-\la_j)$ then
we will write $\mT x=\lambda x.$

It is often useful to relate $\sigma(\mathcal T)$ to a larger and more easily accessible set $W(\mathcal T)\subset \mathbb C^n$ called the  joint numerical
range of $\mathcal T$ and defined as
$$
W(\mathcal T)=\bigl\{(\langle T_1 x, x\rangle , ..., \langle T_n x, x \rangle) : x \in H, \|x\|=1\bigr\}.
$$
The set $W(\mathcal T)$ can be identified with a subset of $\mathbb R^{2n}$ if one identifies the $n$-tuple $\mathcal T$ with the $(2n)$-tuple
$({\rm Re}\, T_1, {\rm Im}\, T_1, ..., {\rm Re}\, T_n, {\rm Im}\, T_n)$ of selfadjoint operators.
Unfortunately, if  $n>1,$ then $W(\mathcal T)$ is not in general convex, see e.g. \cite{Li09}.

As in the spectral theory, there is also a notion of the joint essential numerical range  $W_{\rm e}(\mathcal T)$  associated to $\mathcal T.$
For $\mathcal T=(T_1,\dots,T_n)\in B(H)^n$ we define  $W_{\rm e}(\mathcal T)$ as the set of all $n$-tuples
$\la=(\la_1,\dots,\la_n)\in\CC^n$ such that there exists an orthonormal sequence $(x_k)_{k=1}^{\infty}\subset H$ with
$$
\lim_{k\to\infty}\langle  T_j x_k,  x_k\rangle=\la_j, \qquad j=1,\dots,n.
$$
It is instructive to note that in the definition above one may choose  $(x_k)_{k=1}^{\infty}\subset H$ to be an orthonormal basis of $H.$
For $n=1$ the proof of the latter fact can be found in \cite{Bourin03} and \cite{Stout81},
for the general case see \cite[Theorem 2.1]{Li09}.
It is easy to see that $\la \in W_{\rm e}(\cT)$ if and only if
for every $\de>0$ and every  subspace $M\subset H$ of finite codimension
there exists a unit vector $x\in M$ such that $\|\langle\mT x,x\rangle-\la\|<\de,$
see e.g. \cite[Proposition 5.5]{Muller19} for the proof.
The latter property was a basis of many inductive constructions in \cite{Muller18} and \cite{Muller19},\
and it will also be crucial in this paper.

Alternatively, $W_{\rm e}(\mathcal T)$ can be described as
$$
W_{\rm e}(\mathcal T):= \bigcap \overline{W(T_1 + K_1,\dots,T_n+K_n)}
$$
where the intersection is taken over all $n$-tuples $K_1,\dots,K_n$ of compact operators on $H.$
Moreover, by \cite[Corollary 14]{MullerSt}, we can always find a tuple $(K^0_1,\dots,K^0_n) \in K(H)^n$ such that
\begin{equation}\label{compactp}
W_{\rm e}(\mathcal T)= \overline{W(T_1 + K^0_1,\dots,T_n+K^0_n)}.
\end{equation}
Recall that $W_{\rm e}(\mathcal T)$ is a nonempty, compact and, in contrast to $W(\mathcal T),$ \emph{convex} subset of $\overline{W(\mathcal T)}$, see \cite{BercoviciFoias} or \cite{Li09}.

As a straightforward consequence of the definitions above, if the $n$-tuple $\mathcal T \in B(H)^n$ is commuting then  $\si_{\pi e}(\mathcal T)\subset W_{\rm e}(\mathcal T).$ Since the polynomial hulls of $\si_{\pi e}(\mathcal T)$ and $\si_{e}(\mathcal T)$ coincide \cite[Corollary 19.16]{Muller07},
their convex hulls coincide as well, and the convexity of $W_{\rm e}(\cT)$ yields
\begin{equation}\label{sigmae}
{\rm conv} \, \sigma_{\rm e}(\cT)\subset W_e(\cT).
\end{equation}
For a comprehensive account of essential numerical ranges one may consult \cite{Li09}. See also \cite{Muller19} for
a discussion of other numerical ranges closely related to the notion of essential numerical range.

There are several other numerical ranges useful in applications. In particular, the so-called infinite numerical range $W_\infty(\mathcal T)$
will be relevant in the sequel.
 Recall that if $\mathcal T=(T_1, \dots, T_n) \in B(H)^n$ then $W_\infty(\mathcal T)$  can be defined by
 $$
W_\infty(\mathcal T):=\bigl\{(\lambda_1, \dots, \lambda_n)\in \mathbb C^n: P T_jP =\la_jP, \quad j=1,\dots,n\bigr\}
$$
for some infinite rank projection $P.$
Note that $\la \in W_{\infty}(\cT)$ if and only if
for  every  subspace $M\subset H$ of finite codimension
there exists a unit vector $x\in M$ such that $\langle\mT x,x\rangle=\la,$
see e.g. \cite[Proposition 5.4]{Muller19}.
By \cite[Theorem 5.8]{Muller19}, the essential numerical range $W_{\rm e}(\mT)$ of $\mathcal T$ can be described
in terms of $W_\infty (\mathcal T)$ as
$$
W_e(\mT)=\bigcup_{\mK\in\mK(H)^n} W_\infty(\mT+\mK).
$$

The infinite numerical range of a tuple is closely related to its essential numerical range as the following statement shows,
see \cite[Corollary 4.3]{Muller18} and \cite[Corolary 4.3]{Muller19}.

\begin{theorem}\label{wint}
For any $\mathcal T \in B(H)^n,$
\begin{equation}\label{einf}
\Int (W_{\rm e}(\mathcal T)) \subset W_{\infty}(\mathcal T).
\end{equation}
Moreover, if the tuple $\mathcal T$ is commuting then
\begin{equation}\label{a2}
\Int \conv\, \si(\mathcal T) \subset W(\mathcal T).
\end{equation}
\end{theorem}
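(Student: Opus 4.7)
The plan is to treat the two inclusions separately, with \eqref{einf} carrying the main content. By the characterization recalled immediately before the statement, $\la\in W_\infty(\cT)$ is equivalent to the assertion that for every subspace $M\subset H$ of finite codimension there exists a unit vector $x\in M$ with $\langle\cT x,x\rangle=\la$. Once this is established for a single $M$, the infinite-rank projection $P$ is constructed inductively: at step $k$, apply the statement with $M_k=\{x_1,\dots,x_{k-1}\}^\perp$ (still of finite codimension) to obtain the next $x_k$, and set $P=\sum_{k\ge 1}\langle\cdot,x_k\rangle x_k$.

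So fix $\la\in\Int W_{\rm e}(\cT)$ and a finite-codim $M\subset H$. Since $W_{\rm e}(\cT)$ is convex and $\la$ is interior (viewing $\mathbb C^n=\RR^{2n}$), I would pick $2n+1$ points $\mu_0,\dots,\mu_{2n}\in W_{\rm e}(\cT)$ in general position and close to $\la$, so that $\la\in\Int\conv\{\mu_0,\dots,\mu_{2n}\}$ with some positive distance $r$ to the boundary. For $\e>0$ (to be chosen), I would use the characterization $\mu\in W_{\rm e}(\cT)\iff$ every finite-codim subspace contains unit vectors with $\langle\cT\cdot,\cdot\rangle$ within $\e$ of $\mu$ to produce, iteratively, orthonormal vectors $x_0,\dots,x_{2n}\in M$ with $\|\langle\cT x_i,x_i\rangle-\mu_i\|<\e$. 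At the $i$-th step I would work inside
$$
M\cap\{x_0,\dots,x_{i-1}\}^\perp\cap\bigcap_{j=1}^{n}\{T_j x_0,\dots,T_j x_{i-1},T_j^*x_0,\dots,T_j^*x_{i-1}\}^\perp,
$$
which is still of finite codimension; this forces the cross entries $\langle T_j x_i,x_k\rangle$ for $i\ne k$ to be smaller than $\e$ as well.

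The heart of the proof is then a topological degree argument. Consider
$$
\Phi:\Delta^{2n}\to\CC^n,\qquad \Phi(t_0,\dots,t_{2n})=\Bigl\langle\cT\sum_{i=0}^{2n}\sqrt{t_i}\,x_i,\sum_{i=0}^{2n}\sqrt{t_i}\,x_i\Bigr\rangle,
$$
on the standard simplex $\Delta^{2n}=\{t:\,t_i\ge0,\sum t_i=1\}$. Orthonormality of the $x_i$ gives $\|\sum\sqrt{t_i}x_i\|=1$, and an expansion produces $\Phi(t)=\sum_i t_i\langle\cT x_i,x_i\rangle+\sum_{i\ne k}\sqrt{t_ik}\langle\cT x_i,x_k\rangle$. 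If all cross terms vanished, $\Phi$ would reduce to the affine map $t\mapsto\sum t_i\mu_i$, a homeomorphism of $\Delta^{2n}$ onto $\conv\{\mu_0,\dots,\mu_{2n}\}$ whose image contains $\la$ in its interior. In general $\Phi$ is within $C_n\e$ (in sup norm) of this affine map, so choosing $\e$ smaller than $r/C_n$ forces $\la\in\Phi(\Delta^{2n})$ by a standard degree/Brouwer argument, producing the required unit vector $x=\sum\sqrt{t_i}x_i\in M$.

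For \eqref{a2}, commutativity gives \eqref{sigmae}, hence $\conv\si_{\rm e}(\cT)\subset W_{\rm e}(\cT)$, and taking interiors combined with \eqref{einf} yields $\Int\conv\si_{\rm e}(\cT)\subset W_\infty(\cT)\subset W(\cT)$. The points of $\si(\cT)\setminus\si_{\rm e}(\cT)$ are Riesz points carrying joint eigenvectors, hence automatically lie in $W(\cT)$; a density/approximation argument then extends the inclusion from $\Int\conv\si_{\rm e}(\cT)$ plus these eigenvalues to all of $\Int\conv\si(\cT)$. The main obstacle throughout is the degree step in \eqref{einf}: quantifying "almost orthogonality in the sense of cross terms" so that the perturbation of the affine parameterization remains within the room $r$ around $\la$, all while staying inside the prescribed finite-codim subspace $M$.
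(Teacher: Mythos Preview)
The paper does not actually prove Theorem~\ref{wint}: it quotes the result from \cite[Corollary 4.3]{Muller18} and \cite[Corollary 4.3]{Muller19}, and the same is done for the closely related Proposition~\ref{P4.1}. So there is no in-paper proof to compare against beyond the implicit statement that the cited argument is the intended one.

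Your treatment of \eqref{einf} is essentially the standard proof (and, as far as one can tell, the one in \cite{Muller18}). One simplification is worth noting: with your choice of the finite-codimensional subspace at step $i$, the vector $x_i$ is orthogonal to \emph{both} $T_j x_k$ and $T_j^* x_k$ for all $k<i$, hence $\langle T_j x_i,x_k\rangle=\langle x_i,T_j^* x_k\rangle=0$ and $\langle T_j x_k,x_i\rangle=0$. All cross terms in $\Phi$ therefore vanish exactly, and $\Phi(t)=\sum_i t_i\langle\cT x_i,x_i\rangle$ is an affine map of $\Delta^{2n}$ onto the simplex with vertices $\nu_i:=\langle\cT x_i,x_i\rangle$. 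You still need a stability step to guarantee $\la\in\conv\{\nu_0,\dots,\nu_{2n}\}$ from $\|\nu_i-\mu_i\|<\e$, but this is elementary (continuity of barycentric coordinates in the vertices), so the Brouwer/degree machinery, while correct, is heavier than necessary.

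Your sketch for \eqref{a2} has a genuine gap. The assertion that every $\la\in\si(\cT)\setminus\si_{\rm e}(\cT)$ is a Riesz point ``carrying a joint eigenvector'' is not justified for the Harte spectrum of a commuting tuple: already for $n=1$, a point $\la$ with $T-\la$ Fredholm of negative index need not be an eigenvalue of $T$ (only $\bar\la\in\si_p(T^*)$ is guaranteed), and the multivariable version requires a further argument. Even granting some replacement for this, your ``density/approximation argument'' to pass from $\Int\conv\si_{\rm e}(\cT)$ together with finitely many extra points to all of $\Int\conv\si(\cT)$ is not spelled out; what is actually needed is the stronger statement \eqref{a1}, namely $\Int\conv\bigl(W_{\rm e}(\cT)\cup\si_p(\cT)\bigr)\subset W(\cT)$, whose proof uses the same simplex idea as \eqref{einf} but with some of the $\mu_i$ realized by genuine eigenvectors rather than approximate ones. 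Without that, the second inclusion remains incomplete.
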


Thus $W_\infty(\mT)$ is large if $W_{\rm e}(\mathcal T)$ is large.
Note in passing that in general, by \cite[Theorem 4.2]{Muller19},
\begin{equation}\label{a1}
\Int  \conv\, \bigl(W_{\rm e}(\mathcal T)\cup\si_p(\mathcal T)\bigr)\subset W(\mathcal T).
\end{equation}

The importance of $W_\infty(\mathcal T)$ can be illustrated by the next result crucial for our subsequent arguments,
 For $S\subset\CC^n$  denote by $\mM(S)$ the set of all $n$-tuples of operators $\mA=(A_1,\dots,A_n)\in B(H)^n$  such that
 there exist an orthonormal basis $(x_k)_{k=1}^{\infty}$ in $H$ and a sequence $(\la_k)_{k=1}^{\infty}\subset S$
 satisfying  $\mA x_k=\la_k x_k$ for each $k$.
The theorem below identifies compressions of a tuple $\mT$ with a tuple of diagonal operators $\mA$
whose diagonals belong to the infinite numerical range of $\mT,$ its proof can be found in \cite[Proposition 6.2]{Muller19}.

\begin{proposition}\label{propm}
Let $\mT=(T_1,\dots,T_n)\in B(H)^n$. Let $\mA\in\conv\mM(W_\infty(\mT))$. Then there exists a subspace $L\subset H$ such that the compression
 $P_L \mT\vert_{L}$ is unitarily equivalent to $\mA$.
\end{proposition}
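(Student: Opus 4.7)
The plan is to proceed in two reductions followed by an inductive construction. First, I would reduce from $\conv\mM(W_\infty(\mT))$ to $\mM(W_\infty(\mT))$ via a block-diagonal dilation. Suppose $\mA = \sum_{j=1}^{m} t_j \mA_j$ with $\mA_j \in \mM(W_\infty(\mT))$ all acting on the same separable infinite-dimensional Hilbert space $H'$. Set $\tilde\mA := \bigoplus_{j=1}^{m} \mA_j$ on $(H')^{\oplus m}$: this tuple is diagonal in the disjoint union of the diagonalising bases of the $\mA_j$, with eigenvalues still lying in $W_\infty(\mT)$, so $\tilde\mA \in \mM(W_\infty(\mT))$. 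The isometry $V : H' \to (H')^{\oplus m}$, $Vx := (\sqrt{t_1}\,x, \ldots, \sqrt{t_m}\,x)$, satisfies $V^* \tilde\mA V = \sum_{j} t_j \mA_j = \mA$, so $\mA \usim P_{L_\Delta} \tilde\mA|_{L_\Delta}$ where $L_\Delta := V(H')$. Because a compression of a compression is again a compression, once $\tilde\mA$ has been realised as $P_{L_1}\mT|_{L_1}$ it is enough to pass to the subspace of $L_1$ corresponding to $L_\Delta$. Thus one may assume $\mA \in \mM(W_\infty(\mT))$.

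Under this assumption, fix an orthonormal basis $(x_k)_{k=1}^\infty$ of $H'$ with $\mA x_k = \la_k x_k$ and $\la_k = (\la_k^{(1)}, \ldots, \la_k^{(n)}) \in W_\infty(\mT)$. The core step is to construct inductively an orthonormal sequence $(u_k)_{k=1}^\infty$ in $H$ satisfying $\langle T_i u_k, u_k\rangle = \la_k^{(i)}$ for every $i$ and $k$, and $\langle T_i u_k, u_l\rangle = 0$ whenever $k \neq l$. Having chosen $u_1, \ldots, u_{k-1}$, let $F_k$ be the finite-dimensional subspace spanned by the vectors $u_l$, $T_i u_l$ and $T_i^* u_l$ for $l < k$ and $1 \le i \le n$. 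By the equivalent description of $W_\infty(\mT)$ recalled in Section~\ref{prelim} (see \cite[Proposition~5.4]{Muller19}), the hypothesis $\la_k \in W_\infty(\mT)$ yields a unit vector $u_k$ in the finite-codimensional subspace $F_k^\perp$ with $\langle \mT u_k, u_k\rangle = \la_k$. Membership $u_k \in F_k^\perp$ immediately gives $\langle T_i u_k, u_l\rangle = \langle u_k, T_i^* u_l\rangle = 0$ and $\langle T_i u_l, u_k\rangle = 0$ for all $l < k$, thereby discharging the off-diagonal requirement in both directions.

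Finally, let $L$ be the closed linear span of $\{u_k : k\ge 1\}$ and define the unitary $W : H' \to L$ by $W x_k := u_k$. The off-diagonal vanishing gives $P_L T_i u_k = \langle T_i u_k, u_k\rangle u_k = \la_k^{(i)} u_k = W A_i x_k$, so $W^{-1}(P_L T_i|_L) W = A_i$ for each $i$; hence $P_L \mT|_L \usim \mA$. The main obstacle, and the reason for working with $W_\infty(\mT)$ rather than $W_{\rm e}(\mT)$, is that the inductive step demands the \emph{exact} equality $\langle \mT u_k, u_k\rangle = \la_k$ inside a prescribed finite-codimensional subspace; the approximate realisations afforded by the essential numerical range would only yield a $P_L \mT|_L$ unitarily close to $\mA$, not unitarily equivalent. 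Theorem~\ref{wint} together with the defining property of $W_\infty$ is precisely what makes this exact realisation feasible and allows the construction to close.
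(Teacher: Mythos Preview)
Your proof is correct. The paper does not supply its own argument for this proposition, referring instead to \cite[Proposition~6.2]{Muller19}; your two-step approach---dilating a convex combination $\sum_j t_j\mA_j$ to the block-diagonal $\bigoplus_j\mA_j\in\mM(W_\infty(\mT))$ via the isometry $x\mapsto(\sqrt{t_1}\,x,\dots,\sqrt{t_m}\,x)$, then realising a jointly diagonal tuple as a compression by the finite-codimension characterisation of $W_\infty$ to build an orthonormal sequence with prescribed diagonal and vanishing off-diagonal entries---is precisely the natural argument and matches the method of the cited reference.
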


Tuples $\mathcal T_n=(T, T^2,\dots,T^n)$ consisting of powers of a single operator $T \in B(H)$ are of special interest
since they allow one to reveal the structure of an operator $T$ by looking at its powers, thus sometimes uncovering new effects
(see e.g. \cite{Muller18}).
The following statement from \cite{Muller18} describes big subsets of $W(T,T^2,\dots,T^n)$ in terms
of the \emph{spectrum} of $T$, see \cite[Theorem 4.6]{Muller18}. In this paper, it will be vital for constructing (operator) diagonals for tuples of operator powers.

\begin{theorem}\label{lambdawe}
Let $T\in B(H)$ and let $\la$ belong to the interior of the polynomial hull $\widehat \sigma (T)$ of $\sigma(T)$.  Then
$$
(\la,\la^2,\dots,\la^n)\in  \Int W_{\rm e}(T, T^2, \dots, T^n) \subset W_\infty(T,T^2,\dots,T^n).
$$
for all $n\in\NN.$
\end{theorem}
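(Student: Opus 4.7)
The second inclusion $\Int W_{\rm e}(\cT) \subset W_\infty(\cT)$ (with $\cT := (T, T^2, \dots, T^n)$) is a direct instance of \eqref{einf} in Theorem \ref{wint}, so the task is to prove $\phi(\la) := (\la, \la^2, \dots, \la^n) \in \Int W_{\rm e}(\cT)$. My plan is to realise $\phi(\la)$ as an interior point of $\conv \si_{\rm e}(\cT)$, which sits inside $W_{\rm e}(\cT)$ by \eqref{sigmae}. Since $\cT$ is a commuting power tuple, the spectral mapping noted in Section \ref{prelim} gives $\si_{\rm e}(\cT) = \phi(\si_{\rm e}(T))$, so the goal reduces to showing $\phi(\la) \in \Int \conv \phi(\si_{\rm e}(T))$ in $\RR^{2n}$.

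First I would replace $\widehat\si(T)$ by the polynomial hull of $\si_{\rm e}(T)$. The set $E := \si(T) \setminus \si_{\rm e}(T)$ consists of isolated eigenvalues of finite multiplicity, and a routine component analysis of $\CC \setminus \si_{\rm e}(T)$ yields $\widehat\si(T) = \widehat{\si_{\rm e}(T)} \cup E_\infty$, where $E_\infty \subset E$ lies in the unbounded component of $\CC \setminus \si_{\rm e}(T)$. Any $\mu \in E_\infty$ has a punctured neighbourhood in that unbounded component, hence outside $\widehat\si(T)$, so $\mu \notin \Int\widehat\si(T)$. Since $\Int\widehat\si(T)$ is open, it follows that $\la \in \Int\widehat{\si_{\rm e}(T)}$.

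The heart of the proof is the geometric claim: if $K \subset \CC$ is compact and $\la \in \Int\widehat K$, then $\phi(\la) \in \Int \conv \phi(K)$ in $\RR^{2n}$. Every real affine functional on $\RR^{2n}$ pulled back by $\phi$ has the form $c + \Re q(z)$ for some complex polynomial $q$ of degree at most $n$, and is non-trivial iff $q$ is non-constant. The maximum principle for the harmonic function $\Re q$ applied on each bounded component of $\CC \setminus K$ yields $\sup_{\widehat K} \Re q = \sup_K \Re q$, and the strong maximum principle at the interior point $\la$ forbids non-constant $\Re q$ from attaining this supremum at $\la$. Hence $\Re q(\la) < \sup_K \Re q$ for every non-constant $q$, so no supporting hyperplane of $\conv \phi(K)$ passes through $\phi(\la)$, giving $\phi(\la) \in \Int \conv \phi(K)$. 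Full dimensionality of $\conv \phi(K)$ in $\RR^{2n}$ is automatic by the same argument: a non-trivial affine functional vanishing on $\phi(K)$ would force a non-constant $q$ with $\Re q$ constant on $\widehat K$ of non-empty interior, which is absurd. Applying this with $K = \si_{\rm e}(T)$ completes the proof.

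The main obstacle I anticipate is exactly this last geometric step: the inclusion \eqref{sigmae} only places $\phi(\la)$ inside $W_{\rm e}(\cT)$, and upgrading to \emph{interior} genuinely uses the hypothesis that $\la$ lies in the \emph{interior} of $\widehat\si(T)$. The bridge is the harmonic maximum principle for real parts of one-variable polynomials of degree $\le n$, coupling one-variable complex analysis to convex geometry on the moment curve in $\RR^{2n}$.
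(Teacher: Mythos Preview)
Your argument is correct. Note, however, that the paper does not supply its own proof of Theorem \ref{lambdawe}: it is quoted from \cite[Theorem 4.6]{Muller18}, so there is no in-paper proof to compare against directly.

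That said, the paper does contain closely related material in Lemmas \ref{L1.2} and \ref{P1.3}, and the contrast with your approach is worth recording. There the authors take a constructive route: for a disc of radius $\rho$ centred at $\la$ inside $\widehat\si(T)$, they exhibit explicit convex combinations of moment vectors $(\mu,\dots,\mu^n)$ with $|\mu-\la|=\rho$, using scaled roots of unity, and obtain the quantitative lower bound $\dist\{\phi(\la),\partial C_{\rho,\la}\}\ge \rho^n/(4^n\max\{1,|\la|^n\})$. Your route is qualitative: you bypass the combinatorics by observing that every real affine functional on $\RR^{2n}$ pulls back along $\phi$ to $c+\Re q$ with $\deg q\le n$, and then the strong maximum principle for the harmonic function $\Re q$ on $\Int\widehat K$ yields the strict inequality $\Re q(\la)<\sup_K\Re q$ needed to place $\phi(\la)$ in $\Int\conv\phi(K)$. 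This is shorter and more conceptual, but it does not produce the explicit distance estimate of Lemma \ref{P1.3}, which the paper genuinely needs later for its Blaschke-type summability arguments.

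Your reduction from $\widehat\si(T)$ to $\widehat{\si_{\rm e}(T)}$ is also sound and is the right bridge to \eqref{sigmae}: the analytic Fredholm alternative on the unbounded component $\Omega_\infty$ of $\CC\setminus\si_{\rm e}(T)$ makes $\si(T)\cap\Omega_\infty$ discrete, removing a discrete set from the planar domain $\Omega_\infty$ keeps it connected (hence still in the unbounded component of $\CC\setminus\si(T)$), and therefore $\Int\widehat\si(T)=\Int\widehat{\si_{\rm e}(T)}$ as you need.
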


\section{Diagonals: Blaschke-type condition}\label{diagon}

Let $T\in B(H)$. Recall from the introduction that according to  \cite{Herrero91}, any sequence $(\la_k)_{k=1}^\infty\subset \Int W_{\rm e}(T)$ with an accumulation point inside $\Int W_{\rm e}(T)$ can be realized as a diagonal of $T$, i.e., there exists an orthonormal basis $(u_k)_{k=1}^{\infty}$ in $H$ such that $\langle Tu_k,u_k\rangle=\la_k$ for all $k$.

Below we prove  a similar result under a much weaker, Blaschke-type assumption:
$$
\sum_{k=1}^\infty\dist\{\la_k,\partial W_{\rm e}(T)\}=\infty.
$$
Note that if  $W_{\rm e}(T)=\overline\DD$ then this assumption reduces to the condition $\sum_{k=1}^\infty(1-|\la_k|)=\infty$,
opposite to the classical Blaschke's one.
Moreover, our technique allows us to obtain the  result in a more demanding setting of operator tuples, i.e., we construct given common diagonals for $n$-tuples of operators $\mathcal T$ with respect to a common orthonormal basis.

As mentioned in the previous section, the (essential) numerical range of an $n$-tuple $(T_1,\dots,T_n)\in B(H)^n$ can be identified with the (essential) numerical range of the $(2n)$-tuple $(\Re T_1, \Im T_1,\dots, \Re T_n, \Im T_n)$ of selfadjoint operators, considered as a subset of $\RR^{2n}$. It will be convenient to formulate the next results for tuples of selfadjoint operators.

Our arguments rely essentially on the following result describing big subsets of the numerical range of a tuple in terms
of its essential numerical range.

\begin{proposition}\label{P4.1}
Let $\cS=(S_1,\dots,S_s)\in B(H)^s$ be an $s$-tuple of selfadjoint operators. Then
$$
\Int_{\RR^s} W_{\rm e}(\cS)\subset W_\infty(\cS).
$$
\end{proposition}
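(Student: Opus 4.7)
The plan is to invoke the characterization of $W_\infty(\cS)$ recalled in Section \ref{prelim}: membership of $\la$ amounts to finding, in every finite-codimensional subspace $M\subset H$, a unit vector $x\in M$ with $\langle \cS x, x\rangle = \la$, and then iterating this search in suitably shrinking subspaces to produce an orthonormal sequence $(y_n)$ with $\langle S_j y_n, y_m\rangle = \la_j\de_{nm}$ for all $j,n,m$; the closed span of such a sequence gives an infinite-rank projection $P$ verifying $P\cS P = \la P$.

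The heart of the argument is the single-vector step. Given finite-codim $M$ and $\la\in\Int_{\RR^s} W_{\rm e}(\cS)$, I would first use the relative interior assumption together with the convexity of $W_{\rm e}(\cS)$ to choose $s+1$ affinely independent points $\mu_1,\dots,\mu_{s+1}\in W_{\rm e}(\cS)$ whose open simplex contains $\la$. Then I would inductively construct orthonormal vectors $z_1,\dots,z_{s+1}\in M$ with $\langle\cS z_i,z_i\rangle$ arbitrarily close to $\mu_i$ and enjoying the simultaneous orthogonality $\langle S_j z_i,z_k\rangle=0$ for $i\ne k$ and every $j$. At stage $i$, I would take $z_i$ from the finite-codim subspace $M\cap\{z_k, S_j z_k : k<i,\ 1\le j\le s\}^\perp$, using the approximation characterization of $W_{\rm e}(\cS)$ from Section \ref{prelim}; selfadjointness of each $S_j$ automatically converts the orthogonality against $S_j z_k$ into the required $\langle S_j z_i,z_k\rangle=0$.

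The gain is that on the $(s+1)$-dimensional span of the $z_i$, every cross term vanishes, so for $x=\sum c_i z_i$ with $\sum|c_i|^2=1$ one has
$$
\langle\cS x,x\rangle=\sum_i|c_i|^2\langle\cS z_i,z_i\rangle.
$$
The image of these quadratic forms is therefore exactly $\conv\{\langle\cS z_i,z_i\rangle\}_{i=1}^{s+1}$, which by continuity still contains $\la$ in its interior provided the approximations $\langle\cS z_i,z_i\rangle\approx\mu_i$ are chosen fine enough relative to the distance from $\la$ to the boundary of the open simplex of the $\mu_i$. Solving for nonnegative $|c_i|^2$ then yields a unit vector $x\in M$ with $\langle\cS x,x\rangle=\la$ exactly.

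Iterating with $M$ replaced by $M\cap\{y_1,\dots,y_n, S_j y_i : i\le n,\ 1\le j\le s\}^\perp$ at each subsequent stage produces the desired orthonormal sequence $(y_n)$ and hence the infinite-rank projection required by the definition of $W_\infty(\cS)$. The main obstacle I anticipate is the simultaneous $\cS$-orthogonality: without it the cross terms $\langle S_j z_i,z_k\rangle$ would obstruct the clean convex-hull description of the joint numerical range on the $(s+1)$-dimensional span. Fortunately, this orthogonality reduces to only finitely many linear conditions at each stage, compatible with the finite-codim approximation property of $W_{\rm e}(\cS)$, so the construction goes through without extra approximation losses.
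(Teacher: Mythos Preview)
Your argument is correct and supplies precisely the details the paper omits: the authors simply refer to \cite[Corollary 4.3]{Muller18} and note that the same proof works with $\RR^s$ in place of $\CC^n$. The simplex construction you describe---choosing $s+1$ affinely independent points of $W_{\rm e}(\cS)$ around $\la$, realizing them approximately by orthonormal $z_1,\dots,z_{s+1}$ in the prescribed finite-codimensional subspace with the extra $\cS$-orthogonality $z_i\perp S_jz_k$, and then exploiting selfadjointness to kill all cross terms so that $\langle\cS x,x\rangle$ sweeps out the full convex hull---is exactly the natural route and matches the approach of the cited reference.

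Two minor remarks. First, once you have established that every finite-codimensional $M$ contains a unit vector $x$ with $\langle\cS x,x\rangle=\la$, you are already done by the characterization of $W_\infty$ quoted in Section~\ref{prelim}; the subsequent iteration you sketch is essentially a reproof of that characterization and can be omitted. Second, note that the selfadjointness hypothesis is what makes the cross-term elimination work with only finitely many linear constraints per step (you need $z_i\perp S_jz_k$ only for $k<i$); for general tuples one would need $z_i\perp S_jz_k$ and $z_i\perp S_j^*z_k$, which is why the $\CC^n$ statement in Theorem~\ref{wint} is phrased for the interior in $\CC^n$ rather than in a real affine slice.
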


\begin{proof}
The proof of the proposition is exactly the same as the proof of  \cite[Corolary 4.3]{Muller18}, where it was proved that $\Int W_{\rm e}(\cT)\subset W_\infty(\cT)$ for any $n$-tuple $\cT=(T_1,\dots,T_n)\in B(H)^n,$ the interior being considered in $\CC^n$. Thus, we omit the arguments.
\end{proof}

The proof of Theorem \ref{T1.1}, the main result of this section,
introduces a technique which will be crucial for the whole of paper.
\bigskip

\emph{Proof of Theorem \ref{T1.1}}\,\,\,
The assumptions of Theorem \ref{T1.1} imply that $S_j\ne 0$ for at least one of $j=1,\dots,s$.

Let $(y_m)_{m=1}^\infty$ be a sequence of unit vectors in $H$ such that $\bigvee_my_m=H$. Using \eqref{blaschke}, we can find mutually
disjoint sets $A_m, m\in\NN,$ such that $\bigcup_{m=1}^\infty A_m=\NN$ and
$$
\sum_{k\in A_m}\dist\{\la_k,\RR^s\setminus W_{\rm e}(\cS)\}=\infty
$$
for all $m\in\NN$.

It is sufficient to construct an orthonormal sequence $(u_k)_{k=1}^{\infty}$ in $H$ such that $\langle \cS u_k,u_k\rangle=\la_k$ for all $k\in\NN$ and
$$
\ln\dist^2\Bigl\{y_m,\bigvee_{k\le N}u_k\Bigr\}\le
-\frac{\sum_{k\le N,k\in A_m}\dist\{\la_k,\RR^s\setminus W_{\rm e}(\cS)\}}{4\max\{\|S_1\|,\dots,\|S_s\|\}}
$$
for all $N,m\in\NN$.
Indeed, since $$\sum_{k\in A_m}\dist\{\la_k,\RR^s\setminus W_{\rm e}(\cS)\}=\infty$$ for all $m\in\NN$, we will then have
$$
\lim_{N\to\infty}\dist\Bigl\{y_m,\bigvee_{k\le N}u_k\Bigr\}=0,
$$
so $y_m\in\bigvee_{k\in\NN}u_k$ for all $m$, and so $(u_k)_{k=1}^{\infty}$
will be an orthonormal basis satisfying $\langle \cS u_k,u_k\rangle=\la_k$ for all $k\in\NN$.

The sequence $(u_k)_{k=1}^{\infty}$ will be constructed inductively.
Set formally $u_0=0,$ let $N\in\mathbb N,$ and if $N\ge 2$ suppose that $u_k, 1 \le k \le {N-1},$ is an orthonormal set  satisfying
$\langle \cS u_k,u_k\rangle=\la_k$ for all $k\le N-1$ and
$$
\ln\dist^2\Bigl\{y_m,\bigvee_{k\le N-1}u_k\Bigr\}\le
-\frac{\sum_{k\le N-1,k\in A_m}\dist\{\la_k,\RR^s\setminus W_{\rm e}(\cS)\}}{4\max\{\|S_1\|,\dots,\|S_s\|\}}
$$
for all $m\in\NN$.

Write $M_{N-1}=\bigvee_{k\le N-1}u_k$, and let $m \in \mathbb N$ be such that $N\in A_m$. We are looking
for a unit vector $u_{N}\in M_{N-1}^\perp$ such that $\langle \cS u_{N},u_{N}\rangle=\la_{N}$ and
$$
\ln\dist^2\Bigl\{y_m,\bigvee_{k\le N}u_k\Bigr\}\le
-\frac{\sum_{k\le N,k\in A_m}\dist\{\la_k,\RR^s\setminus W_{\rm e}(\cS)\}}{4\max\{\|S_1\|,\dots,\|S_s\|\}}.
$$
By Proposition \ref{P4.1}  one has $\la_{N}\in\Int W_{\rm e}(\cS)\subset W_\infty(\cS).$ Hence
if $y_m\in M_{N-1}$ then it suffices to take any unit vector $u_{N}\in M_{N-1}^\perp$ satisfying
$\langle \cS u_N,u_{N}\rangle=\la_{N}.$

Suppose that $y_m\notin M_{N-1}$. Then
 \begin{equation}\label{y}
 y_m=a+tb
 \end{equation}
 with
\begin{equation}\label{rep}
 a\in M_{N-1}, \qquad b\perp M_{N-1}, \|b\|=1 \quad \text{and} \quad t=\dist\{y_m,M_{N-1}\}>0,
\end{equation}
and so
$$
\ln t^2\le
-\frac{\sum_{k\le N-1,k\in A_m}\dist\{\la_k,\RR^s\setminus W_{\rm e}(\cS)\}}{4\max\{\|S_1\|,\dots,\|S_s\|\}}.
$$

If $\langle \cS b,b\rangle=\la_{N}$ then set $u_{N}:=b$.
If $\langle \cS b,b\rangle\ne\la_{N}$, then let
$$
\rho=\bigl\|\langle \cS b,b\rangle-\la_{N}\bigr\|
\qquad
\text{and}
\qquad
\de=\frac{1}{2}\dist\{\la_{N},\RR^s\setminus W_{\rm e}(\cS)\}.
$$

Let $\mu \in \mathbb C^s$ be the unique vector
such that $\|\mu-\la_{N}\|=\de$ and
$$
\frac{\langle \cS b,b\rangle-\la_{N}}{\rho}=\frac{\la_{N}-\mu}{\de}.
$$
Clearly, $\mu\in\Int W_{\rm e}(\cS)\subset W_\infty(\cS)$ by the choice of $\delta$ and Theorem \ref{wint}.

The subspace generated by $M_{N-1}$ and $b, S_1b,\dots,S_s b$ is  finite-dimensional in $H.$
 So there exists $x\in H$, $\|x\|=1$ such that $$x\perp M_{N-1}, b, S_1b,\dots,S_s b, \qquad \text{and}
\qquad \langle \cS x,x\rangle=\mu,$$
and we set
$$
u_{N}:=\sqrt{\frac{\rho}{\rho+\de}}\,x+\sqrt{\frac{\de}{\rho+\de}}\,b.
$$

We have $\|u_{N}\|=1$ and $u_{N}\perp M_{N-1},$ and moreover
\begin{align*}
\langle \cS u_{N},u_{N}\rangle=&
\frac{\rho}{\rho+\de}\,\mu+\frac{\de}{\rho+\de}\,\langle \cS b,b\rangle\\
=&\frac{\rho}{\rho+\de}\,(\mu-\la_{N})+\frac{\de}{\rho+\de}\,(\langle \cS b,b\rangle-\la_{N})+\la_{N}\\
=&\la_{N}.
\end{align*}

It remains to estimate the distance $\dist\{y_m,M_{N}\}$.
In view of \eqref{y} and \eqref{rep},
taking into account that $\langle b,u_{N}\rangle=\sqrt{\frac{\de}{\rho+\de}}$,
we have
$$
\dist^2\{y_m,M_{N}\}=
\dist^2\{tb,M_{N}\}=t^2\cdot\dist^2\Bigl\{b,\bigvee_{k=1}^{N} u_ k\Bigr\}=
t^2\Bigl(1-\frac{\de}{\rho+\de}\Bigr)
$$
and
\begin{align*}
\ln\frac{\dist^2\{y_m,M_{N}\}}{\dist^2\{y_m,M_{N-1}\}}=&
\ln\Bigl(1-\frac{\de}{\rho+\de}\Bigr)
\le -\frac{\de}{\rho+\de}\\
\le&
-\frac{\dist\{\la_{N},\RR^s\setminus W_{\rm e}(\cS)\}}{4\max\{\|S_1\|,\dots,\|S_s\|\}}.
\end{align*}
Hence
\begin{align*}
\ln\dist^2\{y_m,M_{N}\}\le&
\ln\dist^2\{y_m,M_{N-1}\}-\frac{\dist\{\la_{N},\RR^s\setminus W_{\rm e}(\cS)\}}{4\max\{\|S_1\|,\dots,\|S_s\|\}}\\
\le&
-\sum_{k\le N, k\in A_m} \frac{\dist\{\la_k,\RR^s\setminus W_{\rm e}(\cS)\}}{4\max\{\|S_1\|,\dots,\|S_s\|\}}.
\end{align*}
This finishes the proof. \hfill $\Box$

\begin{corollary}\label{complex}
Let $\cT=(T_1,\dots,T_n)\in B(H)^n$. For $W_{\rm e}(\cT)$ identified with a subset of $\mathbb R^{2n},$ let $M\subset\mathbb R^{2n}$ be the smallest affine  subspace containing $W(\cT)$. Let $(\la_k)_{k=1}^{\infty} \subset\Int_M W_{\rm e}(\cT)$ satisfy
$$
\sum_{k=1}^\infty \dist\{\la_k,M\setminus W_{\rm e}(\cT)\}=\infty.
$$
Then $(\la_k)_{k=1}^{\infty}\in\mathcal D(\cT)$.
\end{corollary}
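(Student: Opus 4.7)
\textbf{Proof plan for Corollary \ref{complex}.}
The plan is to reduce the statement to Theorem \ref{T1.1} by an affine-isometric change of coordinates that turns $M$ into a full Euclidean space.

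First, I identify the $n$-tuple $\cT \in B(H)^n$ with the $2n$-tuple $\cS = (\Re T_1,\Im T_1,\dots,\Re T_n,\Im T_n)$ of selfadjoint operators, using the bijection $\CC^n \cong \RR^{2n}$ sending $z$ to $(\Re z_1,\Im z_1,\dots,\Re z_n,\Im z_n)$. Under this identification, $W(\cT)=W(\cS)$, $W_{\rm e}(\cT)=W_{\rm e}(\cS)$, and $\mathcal D(\cT)=\mathcal D(\cS)$. In particular, since $W_{\rm e}(\cS)\subset\overline{W(\cS)}\subset M$, all quantities appearing in the hypothesis make sense in $M$.

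Next, set $d:=\dim M$, fix a base point $a\in M$ and an orthonormal basis $v_1,\dots,v_d$ of the linear part $M_0=M-a$, and let $\phi:\RR^d\to M$ be the affine isometry $\phi(t)=a+\sum_{i=1}^d t_iv_i$. Writing $v_i=(v_{i,1},\dots,v_{i,2n})$ and $a=(a_1,\dots,a_{2n})$, define selfadjoint operators
$$
\tilde S_i\;:=\;\sum_{j=1}^{2n} v_{i,j}(S_j-a_j I),\qquad i=1,\dots,d,
$$
and set $\tilde\cS=(\tilde S_1,\dots,\tilde S_d)$. A direct computation gives
$\langle\tilde\cS x,x\rangle=\phi^{-1}(\langle\cS x,x\rangle)$
for every unit vector $x\in H$. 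Passing this identity to weak limits along orthonormal sequences (or using the finite-codimension characterization of $W_{\rm e}$), we obtain $W(\tilde\cS)=\phi^{-1}(W(\cS))$, $W_{\rm e}(\tilde\cS)=\phi^{-1}(W_{\rm e}(\cS))$ and $\mathcal D(\tilde\cS)=\phi^{-1}(\mathcal D(\cS))$ (the last in the sense that $\phi$ intertwines the respective diagonals). Because $\phi$ is a homeomorphism $\RR^d\to M$, it sends $\Int_{\RR^d} W_{\rm e}(\tilde\cS)$ onto $\Int_M W_{\rm e}(\cS)$; because $\phi$ is moreover an isometry,
$$
\dist\bigl\{\tilde\la_k,\RR^d\setminus W_{\rm e}(\tilde\cS)\bigr\}=\dist\bigl\{\la_k,M\setminus W_{\rm e}(\cS)\bigr\},
$$
where $\tilde\la_k:=\phi^{-1}(\la_k)$.

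Consequently the sequence $(\tilde\la_k)_{k=1}^\infty\subset\Int_{\RR^d} W_{\rm e}(\tilde\cS)$ satisfies the Blaschke-type divergence condition of Theorem \ref{T1.1} (in particular $\Int_{\RR^d}W_{\rm e}(\tilde\cS)\neq\emptyset$, since otherwise the original hypothesis on the $\la_k$ would be vacuous). Applying Theorem \ref{T1.1} to $\tilde\cS$ yields an orthonormal basis $(u_k)_{k=1}^\infty$ of $H$ with $\langle\tilde\cS u_k,u_k\rangle=\tilde\la_k$; applying $\phi$ coordinatewise gives $\langle\cS u_k,u_k\rangle=\la_k$, so $(\la_k)_{k=1}^\infty\in\mathcal D(\cS)=\mathcal D(\cT)$. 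The only conceptual obstacle is the degeneracy when $M\neq\RR^{2n}$: the ambient-space distance $\dist\{\la_k,\RR^{2n}\setminus W_{\rm e}(\cS)\}$ would be zero and the naive application of Theorem \ref{T1.1} would fail, which is precisely why the reduction through the affine isometry $\phi$ is needed.
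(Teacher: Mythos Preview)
Your proof is correct and takes a genuinely different route from the paper's. The paper argues by induction on the number $s$ of selfadjoint operators: when $M\neq\RR^{s+1}$ there is a real linear relation $\alpha_0+\sum_j\alpha_j\langle S_jx,x\rangle=0$, so one operator (say $S_{s+1}$) is an affine combination of the others, and the problem is pushed down to the $s$-tuple $\cS'=(S_1,\dots,S_s)$ and the coordinate projection $P:\RR^{s+1}\to\RR^s$; one then checks that $\sum_k\dist\{P\la_k,P(M)\setminus W_{\rm e}(\cS')\}=\infty$ and invokes the induction hypothesis.

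Your approach performs this reduction in a single step by choosing an orthonormal basis of $M_0$ and passing to the $d$-tuple $\tilde\cS$. The gain is conceptual cleanliness: because $\phi$ is an \emph{isometry}, the distance identity $\dist\{\tilde\la_k,\RR^d\setminus W_{\rm e}(\tilde\cS)\}=\dist\{\la_k,M\setminus W_{\rm e}(\cS)\}$ is immediate, whereas the paper's coordinate projection $P$ is only a bi-Lipschitz isomorphism on $M$, so the corresponding divergence claim requires a small (unstated) argument about boundedness of the distortion. The paper's induction, on the other hand, avoids introducing the auxiliary tuple $\tilde\cS$ and the affine-isometry formalism, handling the degenerate base case $M=\{a\}$ explicitly. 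Either way the substance is the same: reduce to a tuple whose essential numerical range has nonempty interior in the ambient Euclidean space and apply Theorem~\ref{T1.1}. (You may wish to add one sentence covering the trivial case $d=0$, i.e.\ $M$ a single point, where $\tilde\cS$ is the empty tuple and the conclusion holds because each $S_j$ is a scalar multiple of $I$.)
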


\begin{proof}
It is sufficient to consider tuples of selfadjoint operators. So assume that $\cS=(S_1,\dots,S_s)\in B(H)^s$ is an $s$-tuple of selfadjoint operators. Let $M$ be the smallest affine subspace containing  $W(\cS),$ and let $(\la_k)_{k=1}^{\infty}\subset\Int_M W_{\rm e}(\cS)$ satisfy
$\sum_{k=1}^\infty \dist\{\la_k,M\setminus W_{\rm e}(\cS)\}=\infty$.

We prove the statement by induction on $s$. If $s=1$, then either $M=\RR$ and the statement follows from the previous theorem, or $M$ is a single point, $M=\{t\}$, so that $S_1=tI$ and there's nothing to prove.

Suppose that the statement is true for $s\ge 1$. Let $\cS=(S_1,\dots,S_{s+1})$ be an $(s+1)$-tuple of selfadjoint operators. Let $M$ be the affine subspace generated by $W(\cS)$. If $M=\RR^{s+1}$ then the statement follows from the previous theorem.

Suppose that $M\ne\RR^{s+1}$. Then there exist $\al_0,\al_1,\dots,\al_{s+1}\in\RR$ such that $(\al_1,\dots,\al_{s+1})\ne(0,\dots,0)$ and for all $m=(m_1,\dots,m_{s+1})\in M$,
$$
\al_0+\sum_{j=1}^{s+1}\al_j m_j=0.
$$
Without loss of generality we may assume that $\al_{s+1}\ne 0$. Then
$$
W\Bigl(\frac{\al_0}{\al_{s+1}}I+\sum_{j=1}^{s+1}\frac{\al_j}{\al_{s+1}} S_j\Bigr)=\{0\},
$$
and so
\begin{equation}\label{star}
S_{s+1}=-\frac{\al_0}{\al_{s+1}}I-\sum_{j=1}^{s}\frac{\al_j}{\al_{s+1}} S_j.
\end{equation}
Let $\cS'=(S_1,\dots,S_s)$ and let $P:\RR^{s+1}\to\RR^s$ be the natural projection onto the first $s$ coordinates.
For $k\in\NN$ let $\la_k=(\la_{k,1},\dots,\la_{k,s+1})$. By \eqref{star}, we have
$$
\la_{k,s+1}=-\frac{\al_0}{\al_{s+1}}-\sum_{j=1}^s\frac{\al_j}{\al_{s+1}}\la_{k,j}
$$
for all $k\in\NN$. So it is sufficient to show that $(P\la_k)_{k=1}^{\infty}\in{\mathcal D}(\cS')$.

The smallest affine subspace containing $W(\cS')$ is $P(M)$ and it is easy to see that
$\sum_{k=1}^\infty\dist\{P\la_k,P(M)\setminus W_{\rm e}(\cS')\}=\infty$. By the induction assumption this implies that $(P\la_k)_{k=1}^{\infty}\in{\mathcal D}(\cS')$.
\end{proof}

\begin{corollary}\label{C4.4}
Let $\cT=(T_1,\dots,T_n)\in B(H)^n$. Let $(\la_k)_{k=1}^{\infty}\subset\Int W_{\rm e}(\cT)$ satisfy
\begin{equation}\label{blaschke1}
\sum_{k=1}^\infty \dist\{\la_k, \mathbb C^n \setminus W_{\rm e}(\cT)\}=\infty.
\end{equation}
Then $(\la_k)_{k=1}^\infty\in\mathcal D(\cT)$.
\end{corollary}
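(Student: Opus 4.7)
The plan is to obtain Corollary \ref{C4.4} as an immediate consequence of Corollary \ref{complex}. Both statements have the same structure, so the only thing to verify is that, under the hypotheses of Corollary \ref{C4.4}, the relative notions appearing in Corollary \ref{complex} collapse to the absolute ones: namely that the smallest affine subspace $M\subset\RR^{2n}$ containing $W(\cT)$ is $\RR^{2n}$ itself.

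First I would argue that $M=\RR^{2n}$. Recall that $W_{\rm e}(\cT)\subset\overline{W(\cT)}$. Since by hypothesis the sequence $(\la_k)_{k=1}^\infty$ lies in $\Int W_{\rm e}(\cT)$, the essential numerical range contains some open ball of $\RR^{2n}$, and hence so does $\overline{W(\cT)}$. The affine hull of a set coincides with the affine hull of its closure (an affine subspace is closed and the closure of $W(\cT)$ is contained in any closed affine set that contains $W(\cT)$), so the smallest affine subspace containing $W(\cT)$ is the same as the smallest one containing $\overline{W(\cT)}$. Since the latter contains a nonempty open set in $\RR^{2n}$, that affine subspace must be all of $\RR^{2n}$, i.e.\ $M=\RR^{2n}$.

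Next I would check that the two hypotheses of Corollary \ref{complex} are satisfied. With $M=\RR^{2n}$ we have
\[
\Int_M W_{\rm e}(\cT)=\Int W_{\rm e}(\cT),
\]
so $(\la_k)_{k=1}^\infty\subset\Int_M W_{\rm e}(\cT)$. Moreover,
\[
\dist\bigl\{\la_k,\,M\setminus W_{\rm e}(\cT)\bigr\}=\dist\bigl\{\la_k,\,\CC^n\setminus W_{\rm e}(\cT)\bigr\},
\]
so \eqref{blaschke1} becomes precisely the divergence assumption in Corollary \ref{complex}. Applying Corollary \ref{complex} then yields $(\la_k)_{k=1}^\infty\in\mathcal D(\cT)$, as required.

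There is no real obstacle here; the deduction is essentially a formal check. The only point to watch is the elementary observation that a set with nonempty interior in $\RR^{2n}$ has full affine hull, which justifies the reduction $M=\RR^{2n}$ and guarantees that the relative Blaschke-type condition in Corollary \ref{complex} agrees with the absolute one in \eqref{blaschke1}.
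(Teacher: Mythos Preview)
Your proposal is correct and matches the paper's intended approach: the paper states Corollary~\ref{C4.4} without proof immediately after Corollary~\ref{complex}, treating it as the special case $M=\RR^{2n}$, and your argument that $\Int W_{\rm e}(\cT)\ne\emptyset$ forces the affine hull of $W(\cT)$ to be all of $\RR^{2n}$ is exactly the missing verification. One could equally well bypass Corollary~\ref{complex} and apply Theorem~\ref{T1.1} directly to the selfadjoint $(2n)$-tuple $(\Re T_1,\Im T_1,\dots,\Re T_n,\Im T_n)$, but this amounts to the same thing.
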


\begin{remark}\label{diagon_rem}
The corollary above, and even weaker Herrero's result mentioned in the introduction,
 allows one to deduce directly Fong's theorem from \cite{Fong87}
saying that any bounded sequence in $\mathbb C$ can be realized as a diagonal of square zero nilpotent $N$.
As noted in \cite[p. 864]{Herrero91}, it is enough to observe that $W_{\rm e}(N)$  can contain an arbitrary disc (with center at zero)
for an appropriate $N.$
Similarly, using \cite[Corollary 2.6]{Li-Rocky}  (and Theorem 1.1 quoted there and proved in \cite{Tse-Rocky}),
one can prove that any bounded sequence from $\mathbb C$ can be realized as a diagonal of idempotent thus recovering
\cite[Theorem 3.7]{Loreaux16}. In fact, using \cite[Corollary 2.6]{Li-Rocky}, similar results can be proved for more general
quadratic operators, but we refrain ourselves from giving straightforward details.
\end{remark}

 A natural question is
  how fast a sequence should approach the boundary of the essential numerical range to be realizable as a diagonal, and whether the assumption \eqref{blaschke} is optimal. It appears that  \eqref{blaschke} cannot in general be improved  as a simple Herrero's example  from \cite[p. 862-863]{Herrero91} shows. Namely, it was proved in \cite{Herrero91} that if $T$ is the unilateral shift, then $W_{\rm e}(T)=\overline{\mathbb D}$, and a sequence $(\la_k)_{k=1}^{\infty}\subset\DD$ is a diagonal of $T$ if and only if $\sum_{k=1}^\infty(1-|\la_k|)=\infty$.

The same proof works in a more general setting, see also \cite[Lemma 4.1]{Loreaux18} for a similar argument.

\begin{proposition}
Let $T\in B(H)$ be such that $\|T\|\le 1$ and $W_{\rm e}(T)=\overline\DD$. Suppose that $T$ is not a Fredholm operator of index $0$. Let $(\la_k)_{k=1}^\infty\subset\DD$. Then
$$
(\la_k)_{k=1}^{\infty}\in\mathcal D(T) \Longleftrightarrow \sum_{k=1}^\infty (1-|\la_k|)=\infty.
$$
\end{proposition}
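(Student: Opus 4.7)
The plan is to handle the two implications separately, with the forward direction being essentially a direct appeal to the results already proved in this section, and the reverse direction being a short compact-perturbation argument.

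For the direction $\sum_{k=1}^\infty(1-|\la_k|)=\infty \Rightarrow (\la_k)\in\mathcal{D}(T)$, I would simply observe that $W_{\rm e}(T)=\overline{\DD}$ gives $\Int W_{\rm e}(T)=\DD$ and, for $\la_k\in\DD$, $\dist\{\la_k,\CC\setminus W_{\rm e}(T)\}=1-|\la_k|$. Hence the hypothesis is precisely the Blaschke-type condition \eqref{blaschke1}, and Corollary \ref{C4.4} (applied to the $1$-tuple $\cT=T$) yields $(\la_k)_{k=1}^\infty\in\mathcal{D}(T)$ at once.

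The real content is the converse. Suppose $(\la_k)\in\mathcal{D}(T)$ and, toward a contradiction, $\sum_k(1-|\la_k|)<\infty$; in particular $|\la_k|\to 1$ and $\sum_k(1-|\la_k|^2)\le 2\sum_k(1-|\la_k|)<\infty$. Fix an orthonormal basis $(e_k)_{k=1}^\infty$ with $\langle Te_k,e_k\rangle=\la_k$ and let $D\in B(H)$ be the diagonal operator $De_k=\la_k e_k$. Since $\|T\|\le 1$, for every $k$,
$$\sum_{j\ne k}|\langle Te_k,e_j\rangle|^2 = \|Te_k\|^2 - |\la_k|^2 \le 1-|\la_k|^2,$$
so that
$$\|T-D\|_{HS}^2 = \sum_{k}\sum_{j\ne k}|\langle Te_k,e_j\rangle|^2 \le \sum_{k}(1-|\la_k|^2)<\infty.$$
Thus $T-D$ is Hilbert--Schmidt, hence compact.

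To close the argument I would verify that $D$ is Fredholm of index $0$: because $|\la_k|\to 1$, the set of accumulation points of $(\la_k)$ lies on $\TT$, so $0\notin\si_{\rm e}(D)$ and $D$ is Fredholm; and since $D$ is normal with $D^*e_k=\bar\la_ke_k$, we have $\dim\ker D=\dim\ker D^*=\#\{k:\la_k=0\}<\infty$, giving $\mathrm{ind}\,D=0$. Invariance of the Fredholm index under compact perturbation then yields $\mathrm{ind}\,T=\mathrm{ind}\,D=0$, contradicting the standing hypothesis that $T$ is not Fredholm of index $0$. The only mild obstacle is the Hilbert--Schmidt estimate for $T-D$; everything else is a routine bookkeeping with essential spectra and the Fredholm index.
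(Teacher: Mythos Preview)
Your proof is correct and follows exactly the approach the paper alludes to: the sufficiency is an immediate application of Corollary~\ref{C4.4}, and the necessity is the Herrero-type argument (referenced in the paper via \cite{Herrero91} and \cite[Lemma~4.1]{Loreaux18}) showing that if $\sum_k(1-|\la_k|)<\infty$ then $T$ differs from the diagonal operator $D$ by a Hilbert--Schmidt operator, forcing $T$ to be Fredholm of index~$0$. Nothing is missing.
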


Unfortunately, Blaschke-type conditions \eqref{blaschke} and \eqref{blaschke1} are  only sufficient for a sequence $(\la_k)_{k=1}^{\infty}$ to be in $\mathcal D(\cT)$ for an $n$-tuple $\cT\in B(H)^n$.
On the other hand, we can also formulate a necessary condition as well. Let us start with a single selfadjoint operator.

\begin{proposition}
Let $T\in B(H)$, $T\ge 0$. Let $(\la_k)_{k=1}^{\infty}\subset\Int_\RR W_{\rm e}(T)$ and $(\la_k)_{k=1}^{\infty}\in \mathcal D(T)$. Then
$\sum_{k=1}^\infty \la_k=\infty$.
\end{proposition}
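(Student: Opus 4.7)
The plan is to argue by contradiction, reducing the statement to the fact that the diagonal sum of a positive operator is basis-independent and equal to its trace.

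First, I would fix an orthonormal basis $(e_k)_{k=1}^\infty$ realizing the given diagonal, so that $\la_k=\langle Te_k,e_k\rangle=\|T^{1/2}e_k\|^2$ for each $k$, where $T^{1/2}$ is the positive square root of $T$. Since all $\la_k$ are nonnegative, the series $\sum_k\la_k$ is well-defined in $[0,\infty]$; moreover, by the standard basis-independence of the Hilbert--Schmidt norm, $\sum_k\la_k=\|T^{1/2}\|_{\mathrm{HS}}^2$, so this sum in fact equals the trace of $T$ and does not depend on the chosen basis.

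Next, suppose for contradiction that $\sum_{k=1}^\infty\la_k<\infty$. Then $T^{1/2}$ is Hilbert--Schmidt and hence $T$ is trace class; in particular $T$ is a compact selfadjoint operator on the infinite-dimensional space $H$.

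Finally, I would contradict the hypothesis $(\la_k)_{k=1}^\infty\subset\Int_{\RR}W_{\rm e}(T)$ by computing $W_{\rm e}(T)$. For any orthonormal sequence $(x_n)\subset H$ one has $x_n\to 0$ weakly, so compactness of $T$ yields $Tx_n\to 0$ in norm and therefore $\langle Tx_n,x_n\rangle\to 0$; by the definition of $W_{\rm e}(T)$ this forces $W_{\rm e}(T)\subseteq\{0\}$. Since additionally $0\in\si_{\rm e}(T)\subset W_{\rm e}(T)$ because a compact operator on an infinite-dimensional space is never Fredholm at $0$, we conclude $W_{\rm e}(T)=\{0\}$, and hence $\Int_{\RR}W_{\rm e}(T)=\emptyset$. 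This contradicts even the existence of one $\la_k$ in $\Int_{\RR}W_{\rm e}(T)$, completing the proof.

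There is no real obstacle: the argument is essentially a one-line reduction once one identifies $\sum\la_k$ with $\operatorname{tr}(T)$. The only step that requires a brief justification is the basis-independence of this sum for positive $T$, which is the classical equality $\sum_k\|T^{1/2}e_k\|^2=\|T^{1/2}\|_{\mathrm{HS}}^2$.
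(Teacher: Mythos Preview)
Your proof is correct and follows essentially the same route as the paper: assume $\sum_k\la_k<\infty$, use $\la_k=\|T^{1/2}e_k\|^2$ to conclude that $T^{1/2}$ is Hilbert--Schmidt, hence $T$ is compact, hence $W_{\rm e}(T)=\{0\}$ and $\Int_{\RR}W_{\rm e}(T)=\emptyset$, a contradiction. Your additional remarks on basis-independence of the trace and the explicit computation of $W_{\rm e}(T)$ for compact $T$ are correct but not strictly needed for the argument; the paper simply asserts $W_{\rm e}(T)=\{0\}$ once compactness is established.
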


\begin{proof}
Suppose that $(\la_k)_{k=1}^{\infty} \in\Int_\RR W_{\rm e}(T)$, $(\la_k)_{k=1}^{\infty} \in\mD(T)$ and $\sum_{k=1}^\infty\la_k<\infty$. Then there exists an orthonormal basis $(u_k)_{k=1}^{\infty}$ in $H$ such that $\langle Tu_k,u_k\rangle=\la_k$ for all $k\in\NN$. We have
$$
\sum_{k=1}^\infty\|T^{1/2}u_k\|^2=
\sum_{k=1}^\infty\la_k<\infty.
$$
So $T^{1/2}$ is a Hilbert-Schmidt operator. Thus $T$ is compact, $W_{\rm e}(T)=\{0\}$ and $\Int_{\RR} W_{\rm e}(T)=\emptyset$, a contradiction.
\end{proof}

\begin{corollary}
Let $\cS=(S_1,\dots,S_s)\in B(H)^s$ be an $s$-tuple of selfadjoint operators, let
$(\la_k)_{k=1}^{\infty}\subset {\rm Int}_{\RR^s}\, W_{\rm e}(\cS)$, $(\la_k)_{k=1}^{\infty}\in \mathcal D(\cS),$
 and $\la_k=(\la_{k,1},\dots,\la_{k,s})$ for all $k\in\NN$. Let $\al_0,\dots,\al_s$ be real numbers, $(\al_1,\dots,\al_s)\ne(0,\dots,0)$, $V=\al_0 I+\sum_{j=1}^s\al_j S_j$ and $a=\inf\{t: t\in W (V)\}$. Then
$$
\sum_{k=1}^\infty \Bigl(\al_0-a+\sum_{j=1}^s\al_j\la_{k,j}\Bigr)=\infty.
$$
\end{corollary}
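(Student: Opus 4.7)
The plan is to reduce the corollary to the preceding proposition, applied to the positive selfadjoint operator $V-aI$. Set $V=\al_0 I+\sum_{j=1}^s\al_j S_j$ and define the affine map $\psi:\RR^s\to\RR$ by $\psi(\mu_1,\dots,\mu_s)=\al_0-a+\sum_{j=1}^s\al_j\mu_j$. Since $V$ is selfadjoint, $W(V)$ is the closed interval $[\inf\si(V),\sup\si(V)]$, so $a=\inf\si(V)$ and hence $V-aI\ge 0$. The assumption $(\al_1,\dots,\al_s)\ne(0,\dots,0)$ makes $\psi$ a nonzero linear functional on $\RR^s$, in particular open and surjective.

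Next I would verify that $W_{\rm e}(V-aI)=\psi(W_{\rm e}(\cS))$. The inclusion $\psi(W_{\rm e}(\cS))\subset W_{\rm e}(V-aI)$ is immediate from the defining characterization of the essential numerical range: an orthonormal sequence $(x_k)$ witnessing $\mu\in W_{\rm e}(\cS)$ automatically gives $\langle(V-aI)x_k,x_k\rangle\to\psi(\mu)$. Conversely, given $t\in W_{\rm e}(V-aI)$ realized by some orthonormal sequence $(x_k)$, a diagonal extraction making $\langle S_j x_k,x_k\rangle$ converge for every $j=1,\dots,s$ (possible since each scalar sequence is bounded by $\|S_j\|$) yields a point $\mu\in W_{\rm e}(\cS)$ with $\psi(\mu)=t$. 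Because $\psi$ is open, the image of an open subset of $W_{\rm e}(\cS)$ is an open subset of $W_{\rm e}(V-aI)$, so $\psi$ maps ${\rm Int}_{\RR^s} W_{\rm e}(\cS)$ into ${\rm Int}_\RR W_{\rm e}(V-aI)$.

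To finish, let $(u_k)_{k=1}^\infty$ be an orthonormal basis realizing $(\la_k)_{k=1}^\infty$ as a diagonal of $\cS$, i.e., $\langle S_j u_k,u_k\rangle=\la_{k,j}$ for all $j,k$. A direct computation gives
$$
\langle(V-aI)u_k,u_k\rangle=\al_0-a+\sum_{j=1}^s\al_j\la_{k,j}=\psi(\la_k),
$$
so $(\psi(\la_k))_{k=1}^\infty\in\mD(V-aI)$. By the previous step, $\psi(\la_k)\in{\rm Int}_\RR W_{\rm e}(V-aI)$ for every $k$. Applying the preceding proposition to the positive operator $V-aI$ with the real diagonal $(\psi(\la_k))_{k=1}^\infty$ produces
$$
\sum_{k=1}^\infty\Bigl(\al_0-a+\sum_{j=1}^s\al_j\la_{k,j}\Bigr)=\sum_{k=1}^\infty\psi(\la_k)=\infty,
$$
which is the claim. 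The only genuinely nontrivial step is the identification $W_{\rm e}(V-aI)=\psi(W_{\rm e}(\cS))$ together with the transfer of the relative interior; everything else is a routine reduction to the scalar positive case already handled.
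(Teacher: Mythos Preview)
Your proof is correct and follows essentially the same approach as the paper: reduce to the preceding proposition applied to the positive operator $V-aI$, using that the affine map $\psi$ carries the diagonal $(\la_k)$ of $\cS$ to a diagonal of $V-aI$ lying in $\Int_\RR W_{\rm e}(V-aI)$. The paper's own proof is terser---it simply asserts the three facts $(\psi(\la_k))\in\mD(V-a)$, $\psi(\la_k)\in\Int_\RR W_{\rm e}(V-a)$, and $V-a\ge0$ without further justification---whereas you supply the details, in particular the identification $W_{\rm e}(V-aI)=\psi(W_{\rm e}(\cS))$ via a subsequence argument and the transfer of interiors via openness of $\psi$. One small inaccuracy: $W(V)$ need not be a \emph{closed} interval (the endpoints are attained only when they are eigenvalues), but this is harmless since you only need $a=\inf W(V)=\inf\sigma(V)$ to conclude $V-aI\ge0$.
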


\begin{proof}
From our assumptions it follows that
$$
\Bigl(\al_0-a+\sum_{j=1}^s\al_j\la_{k,j}\Bigr)\in
\mathcal D\Bigl(V-a\Bigr),
$$
$$\Bigl(\al_0-a+\sum_{j=1}^s\al_j\la_{k,j}\Bigr)\in\Int_\RR W_{\rm e}(V-a)$$
 for all $k\in\NN,$
and
$V-a \ge 0.$ So the statement follows from the previous proposition.
\end{proof}

In general, despite the Blaschke-type conditions identify a subset of $\mathcal D (\mathcal T)$
they are far from being characterizations of the whole set $\mathcal D (\mathcal T)$
even if ${\rm Int}\, W_{\rm e}(\mathcal T)$ is large.
Examples of selfadjoint projections (Kadison's theorem) and of normal operators with finite spectrum
(Arveson's obstruction theorem)
can serve as simple illustrations of this fact.

While numerical ranges are useful for dealing with operator diagonals (as the theorem above confirms),
it is also of interest to relate the spectral structure of an operator to its set of diagonals,
thus showing an unexpected link between both.
This task appears to be more demanding: while diagonals ``live'' in the numerical range, their relation to spectrum is far from being obvious.
On this way, we prove a result describing a part of diagonals for powers of an operator  by means
of the polynomial hull of its  spectrum.

We start with several auxiliary statements.

\begin{lemma}\label{L1.2}
Let $\rho>0$ and $n\in\NN$. Then there exists $b_n>0$ with the following property:
If $\e=(\e_1,\dots,\e_n)\in\CC^n$ then there are  $s\in\NN$, $\mu_1,\dots,\mu_s\in \TT_\rho$ and $\al_1,\dots,\al_s\ge 0$ such that
$$
\sum_{j=1}^s \al_j\le b_n\|\e\| \qquad \hbox{and} \qquad \sum_{j=1}^s \al_j\mu_j^k=\e_k, \qquad 1 \le k \le n.
$$
More precisely, one can choose
$$b_n=\begin{cases}(2^{n}-1)\rho^{-n},& \quad  \rho\le 1,\\
2^{n}-1,& \quad \rho \ge 1.
\end{cases}
$$
\end{lemma}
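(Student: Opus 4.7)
The plan is to prove the lemma by induction on $n$, with the key idea being to exploit the $n$-th roots of unity to decouple the $n$-th moment from the lower ones. Concretely, for $\omega=e^{2\pi i/n}$ the elementary identity $\sum_{j=0}^{n-1}\omega^{jk}=n$ when $n\mid k$, and $0$ otherwise, shows that placing equal weights $\alpha$ on the $n$ points $\omega^{j}\mu\in\TT_\rho$, $j=0,\dots,n-1$, contributes $0$ to the moments $k=1,\dots,n-1$ and $n\alpha\mu^n$ to the $n$-th moment. This is the mechanism for adjusting the top moment without disturbing the lower ones.

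The base case $n=1$ is immediate: take $s=1$, $\mu_1=\rho\,\varepsilon_1/|\varepsilon_1|\in\TT_\rho$ (or any $\mu_1\in\TT_\rho$ if $\varepsilon_1=0$) and $\alpha_1=|\varepsilon_1|/\rho$; then $\alpha_1\mu_1=\varepsilon_1$ and $\sum_j\alpha_j=|\varepsilon_1|/\rho\le b_1\|\varepsilon\|$ in either regime. For the inductive step, first apply the induction hypothesis to $(\varepsilon_1,\dots,\varepsilon_{n-1})\in\CC^{n-1}$ to obtain nonnegative weights $\beta_l$ and points $\nu_l\in\TT_\rho$ with $\sum_l\beta_l\nu_l^k=\varepsilon_k$ for $k=1,\dots,n-1$ and $\sum_l\beta_l\le b_{n-1}\|\varepsilon\|$. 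These points induce a residual $n$-th moment $\delta:=\sum_l\beta_l\nu_l^n$ satisfying $|\delta|\le\rho^n\sum_l\beta_l\le b_{n-1}\rho^n\|\varepsilon\|$. Then, setting $\eta:=\varepsilon_n-\delta$, apply the roots-of-unity construction with weight $\alpha=|\eta|/(n\rho^n)$ on each $\omega^j\mu$, where $\mu\in\TT_\rho$ is chosen so that $\mu^n=\eta\rho^n/|\eta|$. This adds exactly $\eta$ to the $n$-th moment and nothing to the lower ones, at an extra cost $|\eta|/\rho^n\le(b_{n-1}+\rho^{-n})\|\varepsilon\|$ via the triangle inequality $|\eta|\le|\varepsilon_n|+|\delta|$.

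Combining the two contributions yields the recurrence $b_n\le 2b_{n-1}+\rho^{-n}$, from which an immediate induction (using $\rho^{-(n-1)}\le\rho^{-n}$ when $\rho\le1$, and $\rho^{-n}\le1$ when $\rho\ge1$) confirms both claimed values $(2^n-1)\rho^{-n}$ and $2^n-1$. There is no serious obstacle once the roots-of-unity decoupling is spotted; the one subtlety worth flagging is that one cannot naively handle $\varepsilon_n$ \emph{before} invoking the induction hypothesis on the lower-dimensional subvector, because the points from that inductive representation have uncontrolled $n$-th moments. This forces the top-moment correction to be applied \emph{after} the lower-moment decomposition is in place, which is precisely what promotes the naive bound $b_{n-1}+\rho^{-n}$ to $2b_{n-1}+\rho^{-n}$.
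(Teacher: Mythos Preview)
Your proposal is correct and follows essentially the same approach as the paper's proof: both argue by induction on $n$, first representing $(\varepsilon_1,\dots,\varepsilon_{n-1})$ via the induction hypothesis and then correcting the residual $n$-th moment by placing equal weights on $n$ points of $\TT_\rho$ differing by $n$-th roots of unity, arriving at the identical recurrence $b_n=2b_{n-1}+\rho^{-n}$ and the stated closed forms. The only cosmetic difference is that the paper writes the correcting points explicitly as $\xi_j=\rho e^{2\pi i(\varphi+j/n)}$ rather than in your $\omega^j\mu$ notation.
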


\begin{proof}
We prove the lemma  by induction on $n$.
For $n=1$ set $b_1=\rho^{-1}$.
If $\e_1\in\CC$, $\e_1=|\e_1|\cdot e^{2\pi i \varphi}$ for some $\varphi\in [0,1)$, then take $\la_1=\rho e^{2\pi i\varphi}$ and $\al_1=\frac{|\e_1|}{\rho}$. We have
$$
\al_1\la_1=|\e_1|\cdot  e^{2\pi i\varphi}=\e_1 \qquad \hbox{and} \qquad \al_1=b_1|\e_1|,$$
hence the lemma clearly holds for $n=1$ and $b_1=\rho^{-1}$.

Suppose that the lemma is true for some integer $n-1\ge 1$, and prove it for $n$.
Set $$b_n=2b_{n-1}+\rho^{-n}.$$ By the induction assumption,
there exist $l\in\NN$, $z_1,\dots, z_l\in \TT_\rho$ and $\alpha_1,\dots,\alpha_l\ge 0$ such that
$$\sum_{j=1}^{l}\alpha_j\le b_{n-1}\|\e\|$$
and
$$
\sum_{j=1}^{l} \alpha_j z^{k}_j=\e_k, \qquad k=1,\dots,n-1.
$$
Let
$$\tilde\e_n=\e_n-\sum_{j=1}^{l}\alpha_j z^{n}_j.$$
Then
$$|\tilde\e_n|\le|\e_n|+\rho^n\sum_{j=1}^{l} \alpha_j\le\|\e\|+\rho^nb_{n-1}\|\e\|.$$
Write $\tilde\e_n=|\tilde\e_n|\cdot e^{2\pi i\varphi}$ for some $\varphi \in [0,1)$ and
set
$$\xi_j=\rho e^{2\pi i(\varphi+\frac{j}{n})} \qquad \hbox{and} \qquad \beta_j=\frac{|\tilde\e_n|}{n\rho^n}, \qquad 1 \le j \le n.$$

If $1 \le k \le n-1$ then
$$
\sum_{j=1}^n \beta_j\xi^{k}_j=
\frac{|\tilde\e_n|}{n\rho^n} \rho^ke^{2\pi ik \varphi}\sum_{j=1}^{n} e^{2\pi ijk/n}=0.
$$
Similarly,
$$
\sum_{j=1}^n \beta_j\xi^{n}_j=
\frac{|\tilde\e_n|}{n\rho^n} \rho^n e^{2\pi i\varphi}\cdot n=\tilde\e_n.
$$

Thus for every $k,$ $ 1\le k \le n-1,$ we have
$$
\sum_{j=1}^{l} \alpha_jz^{k}_j+\sum_{j=1}^n \beta_j\xi^{k}_j=\e_k
$$
and
$$
\sum_{j=1}^{l} \alpha_jz^{n}_j+\sum_{j=1}^n \beta_j\xi^{n}_j=
\e_n-\tilde\e_n+\tilde\e_n=\e_n.
$$
Finally,
$$
\sum_{j=1}^{l} \alpha_j+\sum_{j=1}^n \beta_j\le
b_{n-1}\|\e\|+\frac{|\tilde\e_n|}{\rho^n}\le
\|\e\|\bigl(2b_{n-1}+\rho^{-n}\bigr)=b_n\|\e\|.
$$

For $\rho\le 1$ one can prove easily by induction that $b_n\le (2^{n}-1)\rho^{-n}$. Similarly, for $\rho\ge 1$ one has $b_n\le 2^{n}-1$.

\end{proof}
For $\lambda \in \mathbb C, \rho >0$ and $n \in \mathbb N$ let $C_{\rho, \lambda}:= \conv\Bigl \{(\mu,\dots,\mu^n):|\mu-\la|\le\rho\Bigr \}.$

\begin{lemma}\label{P1.3}
Let $\la\in\CC$, $n\in\NN$ and $0<\rho\le 1$. Then
$$
\dist \{(\la,\dots,\la^n), \partial C_{\rho, \lambda}\}
\ge\frac{\rho^n}{4^n\max\{1,|\la|^n\}}.
$$
If $\la=0$ then
$$
\dist \{(0,\dots,0), \partial C_{\rho, 0}\}
\ge\frac{\rho^n}{2^n}.
$$
\end{lemma}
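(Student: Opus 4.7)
The plan is to exhibit an explicit convex representation showing that any point $(\la,\la^2,\dots,\la^n) + \e$ with $\e\in\CC^n$ small (in the max-norm of \eqref{lam}) lies in $C_{\rho,\la}$. Concretely, I will try to write
\[
(\la+\epsilon_1,\la^2+\epsilon_2,\dots,\la^n+\epsilon_n)=\beta_0(\la,\la^2,\dots,\la^n)+\sum_{j\ge 1}\beta_j\bigl((\la+\mu_j),(\la+\mu_j)^2,\dots,(\la+\mu_j)^n\bigr),
\]
with $\beta_j\ge 0$, $\sum_{j\ge 0}\beta_j=1$ and $|\mu_j|\le\rho$, which places $(\la,\dots,\la^n)+\e$ in $C_{\rho,\la}$. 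Expanding by the binomial theorem and using $\sum\beta_j=1$, this reduces, for $k=1,\dots,n$, to the triangular system
\[
\sum_{l=1}^{k}\binom{k}{l}\la^{k-l}\gamma_l=\epsilon_k,\qquad \gamma_l:=\sum_{j\ge 1}\beta_j\mu_j^l.
\]

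To solve for $\gamma_l$, I note that in terms of exponential generating functions this system reads $G(x)e^{\la x}=E(x)$, so $\gamma_k=\sum_{l=1}^{k}\binom{k}{l}(-\la)^{k-l}\epsilon_l$. Setting $\Lambda:=\max\{1,|\la|\}$, the crude bound
\[
|\gamma_k|\le \|\e\|\sum_{l=0}^{k}\binom{k}{l}|\la|^{k-l}=\|\e\|(1+|\la|)^k\le\|\e\|(2\Lambda)^n
\]
follows. Next I will invoke Lemma \ref{L1.2} applied to the tuple $(\gamma_1,\dots,\gamma_n)$ with the circle $\TT_\rho$: it yields $s\in\NN$, $\mu_1,\dots,\mu_s\in\TT_\rho$ and $\alpha_1,\dots,\alpha_s\ge 0$ with $\sum_j \alpha_j\mu_j^k=\gamma_k$ and $\sum_j\alpha_j\le b_n\|\gamma\|$, where $b_n=(2^n-1)\rho^{-n}$ since $\rho\le 1$.

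Setting $\beta_j=\alpha_j$ for $j\ge 1$ and $\beta_0:=1-\sum_j\alpha_j$, the only remaining obstruction is nonnegativity of $\beta_0$, i.e.\ the inequality
\[
\sum_{j\ge 1}\alpha_j\le b_n\|\gamma\|\le(2^n-1)\rho^{-n}(2\Lambda)^n\|\e\|\le 1.
\]
This holds whenever $\|\e\|\le\rho^n/(4^n\Lambda^n)$, since $(2^n-1)2^n\le 4^n$; thus the open ball of that radius around $(\la,\dots,\la^n)$ lies in $C_{\rho,\la}$, giving the claimed distance bound $\rho^n/(4^n\max\{1,|\la|^n\})$. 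The case $\la=0$ is simpler because the triangular system degenerates to $\gamma_k=\epsilon_k$, so $\|\gamma\|=\|\e\|$ and the admissibility condition becomes $(2^n-1)\rho^{-n}\|\e\|\le 1$, which is implied by $\|\e\|\le\rho^n/2^n$, yielding the sharper bound $\rho^n/2^n$.

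The only delicate point is the factorial-free bound on $|\gamma_k|$; the exponential-generating-function identity $G=E\cdot e^{-\la x}$ gives the explicit inversion formula cleanly and avoids a clumsy induction on $k$. Everything else is bookkeeping: checking that $\Lambda^n=\max\{1,|\la|^n\}$ absorbs both the $|\la|\le 1$ and $|\la|\ge 1$ regimes, and that the numerical constant $(2^n-1)2^n$ is dominated by $4^n$.
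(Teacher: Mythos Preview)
Your proof is correct and follows essentially the same approach as the paper's. Both arguments reduce to Lemma~\ref{L1.2} after controlling the linear change of variables $(\gamma_1,\dots,\gamma_n)\leftrightarrow(\e_1,\dots,\e_n)$; your explicit inversion $\gamma_k=\sum_{l=1}^k\binom{k}{l}(-\la)^{k-l}\e_l$ with the bound $(1+|\la|)^k\le(2\Lambda)^n$ is exactly the paper's estimate $\|H_\la\|\le 2^n\max\{1,|\la|^n\}$ for the linear part of the affine map $G_\la(z_1,\dots,z_n)_k=\sum_{j=0}^k\binom{k}{j}z_j\la^{k-j}$, combined with the observation $G_{-\la}=G_\la^{-1}$. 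The paper packages the reduction as ``transport the $\la=0$ case via the bi-Lipschitz map $G_\la$'', while you solve the triangular system directly; the content and constants are identical.
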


\begin{proof}
First, assume that $\lambda =0.$  Let $\e=(\e_1,\dots,\e_n)\in\CC^n$ be such that $\|\e\|\le\rho^n 2^{-n}.$
By Lemma \ref{L1.2}, there exist $s\in\NN$, $\mu_1,\dots,\mu_s\in\CC$, with $|\mu_j|=\rho$ for all $j=1,\dots,s$ and $\al_1,\dots,\al_s\ge 0$ with $$\sum_{j=1}^s\al_j\le (2^n-1)\rho^{-n}\|\e\|<1$$ such that
$$
\sum_{j=1}^s \al_j\mu_j^k=\e_k, \qquad k=1,\dots,n.
$$
Thus
$$
\e_k=\sum_{j=1}^s \al_j\mu_j^k+\Bigl(1-\sum_{j=1}^s\al_j\Bigr)\cdot 0
$$
and
$$
\e\in\conv\Bigl\{(0,\dots,0),(\mu_j,\dots,\mu^n_j): j=1,\dots,s\Bigr\}=C_{\rho,0}.
$$
In other words,
$$
\dist \{(0,\dots,0), \partial C_{\rho, 0} \}
\ge\frac{\rho^n}{2^n},
$$
so that the lemma holds for $\lambda=0.$

Let now  $\la\in\CC$ be arbitrary. Consider the affine mapping
$G_\la:(\CC^n, \|\cdot\|) \to (\CC^n, \|\cdot\|)$ defined by
$$
G_\la(z_1,\dots,z_n)= \Bigl(z_1+\lambda, z_2+2\lambda z_1+\lambda^2, \dots ,
\sum_{j=1}^n {n\choose j} z_j\lambda^{n-j} + \lambda^n \Bigr).
$$
Then  the mapping $H_\la: (\CC^n, \|\cdot\|) \to (\CC^n \|\cdot\|)$ given by
$$H_\la(z_1,\dots,z_n)=
G_\la(z_1,\dots,z_n)-(\lambda,\lambda^2,\dots,\lambda^n)$$
is linear and invertible (since it is determined by an upper triangular
matrix with non-zero diagonal). Clearly $\|H_\la\|\le 2^n\max\{1,|\la|^n\}$. So
\begin{equation}\label{glam}
\|G_\la u-G_\la v\|\le 2^n\max\{1,|\la|^n\}\cdot\|u-v\|
\end{equation}
for all $u,v\in\CC^n$. Moreover, for each $\mu\in\CC$ we have
$$
G_\la(\mu,\dots,\mu^n)=(\la+\mu,\dots,(\la+\mu)^n).
$$
  In view of $G_{-\la}G_\la(0,\dots,0)=(0,\dots,0)$, the mapping $G_{-\la}G_\la$ is linear and satisfies $G_{-\la}G_\la(\mu,\dots,\mu^n)=(\mu,\dots,\mu^n)$ for all $\mu\in\CC$. By considering appropriate Vandermonde determinants, it follows that the linear span of $\{(\mu,\dots,\mu^n): \mu\in\CC\}$ is the whole of  $\CC^n.$ So  $G_{-\la}G_\la$ is the identity on $\CC^n$.
Using \eqref{glam}, we infer that
$$
\|u-v\|\le \|G_\la u- G_\la v\|\cdot 2^n\max\{1,|\la|^n\}
$$
for all $u,v$. Finally,
\begin{align*}
&\dist \{(\la,\dots,\la^n), \partial C_{\rho,\lambda}\}\\
=&
\dist\Bigl\{G_\la(0,\dots,0), \partial \, \conv\{G_\la(\mu,\dots,\mu^n):|\mu|\le\rho\}\Bigr\}\\
\ge&
\dist \{(0,\dots,0), \partial C_{\rho, 0}\}\cdot \frac{1}{2^{n}\max\{1,|\la|^n\}}\\
\ge&
\frac{\rho^n}{4^n\max\{1,|\la|^n\}}.
\end{align*}
\end{proof}
The following corollary of independent interest will be needed in the next section.
\begin{corollary}\label{C4.10}
Let $T\in B(H)$, $0\in\Int\widehat\sigma(T)$, and
$\rho:=\dist\{0,\partial\widehat\sigma(T)\}\le 1$. Let $\mu\in\CC^n$ and
$\|\mu\|<2^{-n}\rho^n$. Then $\mu\in W_\infty(T,\dots,T^n)$.
\end{corollary}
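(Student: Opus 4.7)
The plan is to place $\mu$ in the interior of the essential numerical range $W_{\rm e}(T,T^2,\dots,T^n)$ and then invoke the inclusion $\Int W_{\rm e}\subset W_\infty$ from Theorem \ref{wint}. All three of the ingredients one needs (Theorem \ref{lambdawe}, Lemma \ref{P1.3}, and Theorem \ref{wint}) are already in place, so the proof is a matter of chaining them together correctly.

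First I would observe that, by the choice of $\rho$, the open disc $\{\nu\in\CC:|\nu|<\rho\}$ lies in $\Int\widehat\sigma(T)$. Theorem \ref{lambdawe} applied to each such $\nu$ gives $(\nu,\nu^2,\dots,\nu^n)\in W_{\rm e}(T,T^2,\dots,T^n)$. Since $W_{\rm e}(T,T^2,\dots,T^n)$ is a closed (in fact compact) convex subset of $\CC^n$, as recorded in Section \ref{prelim}, a limit argument as $|\nu|\uparrow\rho$ followed by taking convex combinations yields $C_{\rho,0}=\conv\{(\nu,\nu^2,\dots,\nu^n):|\nu|\le\rho\}\subset W_{\rm e}(T,T^2,\dots,T^n)$.

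Now I would call on the $\la=0$ case of Lemma \ref{P1.3}, which says $\dist\{(0,\dots,0),\partial C_{\rho,0}\}\ge\rho^n/2^n$ in the max norm \eqref{lam}. Because $C_{\rho,0}$ is compact, this positive distance from the topological boundary forces the open ball of radius $2^{-n}\rho^n$ about the origin to lie entirely inside $\Int C_{\rho,0}$. In particular, the vector $\mu$ satisfying $\|\mu\|<2^{-n}\rho^n$ is an interior point of $C_{\rho,0}$; since any open ball contained in $C_{\rho,0}$ is a fortiori contained in the larger set $W_{\rm e}(T,T^2,\dots,T^n)$, we obtain $\mu\in\Int W_{\rm e}(T,T^2,\dots,T^n)$. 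Finally, the inclusion $\Int W_{\rm e}(T,T^2,\dots,T^n)\subset W_\infty(T,T^2,\dots,T^n)$ of Theorem \ref{wint} gives $\mu\in W_\infty(T,\dots,T^n)$, as required.

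I do not anticipate a genuine obstacle: the combinatorial heart of the matter is Lemma \ref{P1.3}, already proved, and the remaining steps only package together the spectrum-to-numerical-range inclusion of Theorem \ref{lambdawe}, the closedness and convexity of the essential numerical range, and the interior-to-$W_\infty$ inclusion of Theorem \ref{wint}.
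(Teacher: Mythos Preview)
Your proof is correct and follows essentially the same route as the paper: both combine Lemma \ref{P1.3} (the $\la=0$ case) to place $\mu$ in $\Int C_{\rho,0}$, Theorem \ref{lambdawe} together with the convexity and closedness of $W_{\rm e}$ to get $C_{\rho,0}\subset W_{\rm e}(T,\dots,T^n)$, and then Theorem \ref{wint} to pass from $\Int W_{\rm e}$ to $W_\infty$. The paper simply compresses these steps into the single chain $\mu\in\Int C_{\rho,0}\subset\Int W_{\rm e}(T,\dots,T^n)\subset W_\infty(T,\dots,T^n)$, whereas you spell out the limit and convex-hull arguments explicitly.
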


\begin{proof}
 By Lemma \ref{P1.3} and Theorem \ref{lambdawe}, we have
$$\mu\in \Int C_{\rho,0}\subset \Int W_e(T,\dots,T^n)\subset W_\infty(T,\dots,T^n).$$
\end{proof}

Now we are ready to link spectral properties of operator tuples to their sets of diagonals.

\begin{theorem}
Let $T\in B(H)$, $n\in\NN$, $(\la_k)_{k=1}^\infty\subset\Int\widehat\si(T)$. Suppose that $\sum_{k=1}^\infty \dist^n\{\la_k,\partial\widehat\si(T)\}=\infty$. Then there exists an orthonormal basis $(u_k)_{k=1}^{\infty}$ in $H$ such that
$$
\langle T^ju_k,u_k\rangle=\la_k^j, \qquad k\in\NN,1\le j\le n.
$$
\end{theorem}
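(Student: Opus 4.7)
The plan is to reduce the statement to Corollary \ref{C4.4} applied to the $n$-tuple $\cT=(T,T^2,\dots,T^n)\in B(H)^n$ and to the lifted sequence $\mu_k=(\la_k,\la_k^2,\dots,\la_k^n)\in\CC^n$, $k\in\NN$. Two conditions must then be verified: that $(\mu_k)_{k=1}^\infty\subset\Int W_{\rm e}(\cT)$ and that
$$
\sum_{k=1}^\infty\dist\{\mu_k,\CC^n\setminus W_{\rm e}(\cT)\}=\infty.
$$
The first is immediate from Theorem \ref{lambdawe}, since $\la_k\in\Int\widehat\si(T)$ for every $k$.

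For the second, set $\rho_k=\min\{1,\dist\{\la_k,\partial\widehat\si(T)\}\}$. Because $\widehat\si(T)\subset\{z\in\CC:|z|\le\|T\|\}$ is bounded, capping by $1$ cannot destroy the hypothesis, so we still have $\sum_{k=1}^\infty\rho_k^n=\infty$. For each $k$ the closed disc $\{\mu\in\CC:|\mu-\la_k|\le\rho_k\}$ lies in $\widehat\si(T)$, and Theorem \ref{lambdawe} applied on its interior, combined with the closedness of $W_{\rm e}(\cT)$, gives $(\mu,\mu^2,\dots,\mu^n)\in W_{\rm e}(\cT)$ for every $\mu$ in that disc. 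The convexity of $W_{\rm e}(\cT)$ then yields $C_{\rho_k,\la_k}\subset W_{\rm e}(\cT)$, and Lemma \ref{P1.3} together with the uniform bound $|\la_k|\le\|T\|$ gives
$$
\dist\{\mu_k,\CC^n\setminus W_{\rm e}(\cT)\}\ge\dist\{\mu_k,\partial C_{\rho_k,\la_k}\}\ge\frac{\rho_k^n}{4^n\max\{1,\|T\|^n\}}.
$$
Summing over $k$ yields the required divergence, and Corollary \ref{C4.4} produces an orthonormal basis $(u_k)_{k=1}^\infty$ of $H$ with $\langle T^j u_k,u_k\rangle=\la_k^j$ for all $k\in\NN$ and $1\le j\le n$.

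The main conceptual step is the inclusion $C_{\rho_k,\la_k}\subset W_{\rm e}(\cT)$, which is precisely what connects the spectral hypothesis on $T$ to the numerical-range framework underlying Corollary \ref{C4.4}; it rests on the nontrivial fact $\Int\widehat\si(T)\subset W_\infty(T,\dots,T^n)$ furnished by Theorem \ref{lambdawe}. Once this is in hand, Lemma \ref{P1.3} provides exactly the quantitative boundary estimate that turns the Blaschke-type assumption of exponent $n$ on $(\la_k)$ into the linear Blaschke-type hypothesis demanded of the lifted sequence $(\mu_k)$ by Corollary \ref{C4.4}.
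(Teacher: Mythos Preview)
Your proof is correct and follows essentially the same approach as the paper's: reduce to Corollary \ref{C4.4} for the tuple $(T,\dots,T^n)$, use Theorem \ref{lambdawe} and convexity to get $C_{\rho_k,\la_k}\subset W_{\rm e}(\cT)$, and invoke Lemma \ref{P1.3} to convert the $n$th-power Blaschke condition into the linear one. You are in fact more careful than the paper in two places: you explicitly cap $\rho_k$ at $1$ so that Lemma \ref{P1.3} applies, and you make the uniform bound $|\la_k|\le\|T\|$ explicit so that the constant in the distance estimate is genuinely independent of $k$.
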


\begin{proof}
Let $\cT_n=(T,T^2,\dots,T^n)$.
Since $W_{\rm e}(\mathcal T_n)$ is convex, by Theorem \ref{lambdawe},
$$
W_{\rm e}(\cT_n)\supset\conv\{(\la,\dots,\la^n):\la\in\Int\widehat\si(T)\}.
$$

By Lemma \ref{P1.3}, $(\la_k,\dots,\la_k^n)\in\Int W_{\rm e}(\cT_n)$ for all $k$, and
$$
\sum_{k=1}^\infty \dist\bigl\{(\la_k,\la_k^2,\dots,\la_k^n),\partial W_{\rm e}(\cT_n)\bigr\}=\infty,
$$
So the statement follows from Corollary \ref{C4.4}.
\end{proof}

\section{Diagonals: compact perturbations}

Let $T\in B(H)$ and $0\in W_{\rm e}(T)$. In \cite[Theorem 2.3]{Stout81},
Q. Stout showed that for each sequence $(\al_k)_{k=1}^{\infty}\notin\ell_1$
there exists an orthonormal basis $(u_k)_{k=1}^{\infty}$ in $H$ such that the corresponding diagonal $\langle Tu_k,u_k\rangle_{k=1}^{\infty}$ of $T$ satisfies
$$
|\langle Tu_k,u_k\rangle|\le|\al_k|
$$
for all $k\in\NN$. In particular, for each $p>1$ it is possible to construct a diagonal of $T$
satisfying $\sum_{k=1}^\infty |\langle Tu_k,u_k\rangle|^p<\infty$.
This is an older result due to Anderson \cite{AndersonThesis},
see also \cite[Theorem 4.1]{Fong87}.

By the techniques of this paper, we improve Stout's result in Theorem \ref{T2.1} in two directions.
First, we show that any sequence $(\la_k)_{k=1}^{\infty}\subset W_{\rm e}(T)$ can be approximated by a diagonal in the sense of
\eqref{stout}
 (Stout's statement treats just zero sequences) and, second, we are able to obtain the result for tuples of operators,
rather than for a single operator. Note that Theorem \ref{T2.1} generalizes  also the corresponding Herrero's result from \cite{Herrero91}.

\bigskip
\emph{Proof of Theorem \ref{T2.1}}\,\,\,
We argue as in the proof of Theorem \ref{T1.1}, though the technical details deviate at several points.

Let $(y_m)_{m=1}^\infty$ be a sequence of unit vectors in $H$ such that $\bigvee_my_m=H$.
We can find mutually disjoint sets $A_m,  m\in\NN,$ such that $\bigcup_{m=1}^\infty A_m=\NN$ and
$$
\sum_{k\in A_m}|\al_k|=\infty
$$
for all $m\in\NN$.

Again it is sufficient to construct an orthonormal sequence $(u_k)_{k=1}^{\infty}$ in $H$ such that $\|\langle \cT u_k,u_k\rangle-\la_k\|\le|\al_k|$ for all $k\in\NN$ and
$$
\ln\dist^2\Bigl\{y_m,\bigvee_{k\le N}u_k\Bigr\}\le
-\frac{\sum_{k\le N,k\in A_m}|\al_k|}{4\max\{\|T_1\|,\dots,\|T_n\|\}}
$$
for all $N,m\in\NN$.

The sequence $(u_k)_{k=1}^{\infty}$ will be constructed inductively.
Set formally $u_0=0,$ let $N \in \mathbb N,$ and if $N \ge 2$ suppose that $u_1,\dots,u_{N-1}$ is an orthonormal set satisfying
$\|\langle \cT u_k,u_k\rangle-\la_k\|\le|\al_k|$ for all $k\le N-1$ and
$$
\ln\dist^2\Bigl\{y_m,\bigvee_{k\le N-1}u_k\Bigr\}\le
-\frac{\sum_{k\le N-1,k\in A_m}|\al_k|}{4\max\{\|T_1\|,\dots,\|T_n\|\}}
$$
for all $m\in\NN$. Write $M_{N-1}=\bigvee_{k\le N-1}u_k$ and let $N\in A_m$ for some $m \in \mathbb N.$

Decompose $y_m$ as  $y_m=a+tb$, where $a\in M_{N-1}$, $b\perp M_{N-1}$, $\|b\|=1$ and $t=\dist\{y_m,M_{N-1}\}$.

Using $\lambda_N \in W_{\rm e}(\cT),$ choose a unit vector $v\in M_{N-1}^\perp$ such that
$$
v\perp b, T_1 b,T_1^*b,\dots,T_n b,T_n^* b, \quad \text{and} \quad \bigl\|\langle \cT v,v\rangle-\la_{N}\bigr\|\le \frac{|\al_{N}|}{2}
$$
and set
$$
u_{N}=cb+\sqrt{1-c^2}\,v,
$$
where
$$c=\min\Bigl\{1,\Bigl(\frac{|\al_{N}|}{4\max\{\|T_1\|,\dots,\|T_n\|\}}\Bigr)^{1/2}\Bigr\}.$$
Clearly $\|u_{N}\|=1$ and $u_{N}\perp u_1,\dots,u_{N-1}$.

We have
\begin{align*}
\bigl\|\langle\cT u_{N},u_{N}\rangle-\la_{N}\bigr\|\le&
\bigl\|\langle\cT u_{N},u_{N}\rangle-\langle \cT v,v\rangle\bigr\|+ \bigl\|\langle\cT v,v\rangle-\la_{N}\bigr\|\\
\le&
c^2\|\langle \cT b,b\rangle\|+ c^2\|\langle\cT v,v\rangle\|+\frac{|\al_{N}|}{2}\\
\le&
2c^2\max\bigl\{\|T_1\|,\dots,\|T_n\|\bigr\}+\frac{|\al_{N}|}{2}\\
\le& |\al_{N}|.
\end{align*}
Furthermore,
$$
\dist^2\Bigl\{y_m,\bigvee_{k\le N}u_k\Bigr\}\le
t^2\dist^2\Bigl\{b,\bigvee_{k \le N} u_{N}\Bigr\}
=t^2(1-c^2).
$$
Thus
\begin{align*}
\ln\dist^2\Bigl\{y_m,\bigvee_{k\le N}u_k\Bigr\}\le&
\ln t^2+ \ln(1-c^2)\le \ln t^2 -c^2\\
\le& \ln t^2- \frac{|\al_{N}|}{4\max\{\|T_1\|,\dots,\|T_n\}}\\
\le&
-\frac{\sum_{k\le N,k\in A_m}|\al_k|}{4\max\{\|T_1\|,\dots,\|T_n\|\}}.
\end{align*}
\hfill $\Box$

Now, by means of  Theorem \ref{T2.1}, we can describe
the set of diagonals $\mathcal D (\cT)$
up to $p$-Schatten class perturbations of $\cT$.
In this more general setting, we are able to construct the diagonals
satisfying a generalized Blaschke-type condition
resembling in a sense the definition of $S_p$-classes.
Moreover, the diagonals of perturbations may not necessarily
belong to $W_{\rm e}(\cT)$ but should only approximate $W_{\rm e}(\cT)$
good enough, where the rate of approximation is determined
by the Schatten class of perturbations.

\begin{corollary}\label{C2.2}
Let $\cT=(T_1,\dots,T_n)\in B(H)^n$. Let $p> 1$. Let $(\la_k)_{k=1}^{\infty}\subset \CC^n$ satisfy
\begin{equation}\label{schatten_p}
\sum_{k=1}^\infty \dist^p\{\la_k,W_{\rm e}(\cT)\}<\infty.
\end{equation}
Then there exists an $n$-tuple of operators $\cK=(K_1,\dots,K_n)$ with $K_j$  from the Schatten class $S_p,$ $1\le j\le n,$ such that
$(\la_k)_{k=1}^{\infty} \in \mathcal D(\cT +\cK).$
\end{corollary}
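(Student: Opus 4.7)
The plan is to reduce the corollary to a direct application of Theorem \ref{T2.1}, augmented by a diagonal compact perturbation absorbing the mismatch between $\la_k$ and $W_{\rm e}(\cT)$.

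First, for each $k\in\NN$, since $W_{\rm e}(\cT)$ is compact, pick $\mu_k\in W_{\rm e}(\cT)$ realizing the distance, so that $\|\la_k-\mu_k\|=\dist\{\la_k,W_{\rm e}(\cT)\}=:\al_k$. The assumption \eqref{schatten_p} then yields $(\al_k)_{k=1}^\infty\in\ell_p$. To bridge this with the hypothesis of Theorem \ref{T2.1}, which demands a weight sequence outside $\ell_1$, set
$$
\beta_k:=\al_k+\frac{1}{k}, \qquad k\in\NN.
$$
Since $p>1$, one has $(\beta_k)_{k=1}^\infty\in\ell_p\setminus\ell_1$. Applying Theorem \ref{T2.1} to the sequence $(\mu_k)_{k=1}^\infty\subset W_{\rm e}(\cT)$ with weights $(\beta_k)_{k=1}^\infty\notin\ell_1$ produces an orthonormal basis $(u_k)_{k=1}^\infty$ of $H$ such that
$$
\bigl\|\langle\cT u_k,u_k\rangle-\mu_k\bigr\|\le\beta_k, \qquad k\in\NN.
$$

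Second, for each $j=1,\dots,n$ define $K_j\in B(H)$ to be the diagonal operator in the basis $(u_k)$ with diagonal entries
$$
\gamma_{k,j}:=\la_{k,j}-\langle T_j u_k,u_k\rangle, \qquad k\in\NN,
$$
where $\la_k=(\la_{k,1},\dots,\la_{k,n})$. By the max-norm convention \eqref{lam} and the triangle inequality,
$$
|\gamma_{k,j}|\le\|\la_k-\langle\cT u_k,u_k\rangle\|\le\|\la_k-\mu_k\|+\|\mu_k-\langle\cT u_k,u_k\rangle\|\le\al_k+\beta_k\le 2\beta_k.
$$
The singular values of the diagonal operator $K_j$ coincide with $(|\gamma_{k,j}|)_{k=1}^\infty$, and since $(\beta_k)\in\ell_p$, each $K_j$ belongs to the Schatten class $S_p$.

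Finally, setting $\cK=(K_1,\dots,K_n)$, the construction gives
$$
\langle(T_j+K_j)u_k,u_k\rangle=\langle T_j u_k,u_k\rangle+\gamma_{k,j}=\la_{k,j},
\qquad k\in\NN,\ 1\le j\le n,
$$
so $(\la_k)_{k=1}^\infty\in\mathcal D(\cT+\cK)$, as required. The only subtle point in the plan is calibrating the weight sequence: the $\ell_p$ data for $(\al_k)$ is strictly stronger than what Theorem \ref{T2.1} requires, and strictly weaker than what guarantees $S_p$-membership of the correction; the harmonic term $k^{-1}$ comfortably bridges these extremes precisely because $p>1$.
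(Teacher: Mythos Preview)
Your proof is correct and follows essentially the same approach as the paper's: project each $\la_k$ to $W_{\rm e}(\cT)$, invoke Theorem \ref{T2.1} with an $\ell_p\setminus\ell_1$ weight sequence, and absorb the residual by a diagonal $S_p$ perturbation. The only cosmetic difference is that the paper uses the plain weights $k^{-1}$ in Theorem \ref{T2.1} (rather than your $\al_k+k^{-1}$), which suffices since the final $S_p$ bound only needs $\|\la_k-\la_k'\|+k^{-1}\in\ell_p$.
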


\begin{proof}
Let $p >1$ be fixed and let $(\la_k)_{k=1}^{\infty}\subset\CC^n$ satisfy \eqref{schatten_p}.

Find a sequence $(\la_k')_{k=1}^{\infty}\subset W_{\rm e}(\cT)$ such that $\sum_{k=1}^\infty\|\la_k'-\la_k\|^p< \infty$. By Theorem \ref{T2.1}, there exists an orthonormal basis $(u_k)_{k=1}^{\infty}$ in $H$ such that $$\|\langle\cT u_k,u_k\rangle-\la_k'\|\le k^{-1}$$ for all $k$.
Define $\cK=(K_1,\dots,K_n)\in B(H)^n$ as
$$
\cK u_k=(\la_k-\langle\cT u_k,u_k\rangle)u_k
$$
for all $k$. Then
$$
\langle(\cT+\cK)u_k,u_k\rangle=\la_k, \qquad k\in\NN,
$$
and
$$
\sum_{k=1}^\infty \|\la_k-\langle\cT u_k,u_k\rangle\|^p
\le
\sum_{k=1}^\infty \Bigl(\|\la_k-\la_k'\|+\|\la_k'-\langle\cT u_k,u_k\rangle\|\Bigr)^p
<\infty.
$$
So the operators $K_1,\dots,K_n$ belong to $S_p$.
\end{proof}

We finish this section with a discussion of a subset
$
\mD_{\rm const} (\cT)=\{
\la\in\CC^n: (\la,\la,\dots)\in \mD (\cT)\}
$
of diagonals of $T \in B(H)$
that seems to be crucial.
Recall that operators possessing a zero diagonal appeared relevant in several areas of operator theory
(e.g. the study of commutators) and
attracted a substantial attention much before the foundational works of Kadison and Arveson on the set of (all) diagonals.
Thus it is natural to try to understand the whole set of constant diagonals for a fixed $T$
and to relate its structure to the structure of $\sigma(T)$ and $W(T).$
While this task seems to be more accessible than a characterization of $\mathcal T,$
we are far from a complete answer even for very simple operators $T.$

Observe that by Theorem \ref{T1.1},
\begin{equation}\label{inclusion_e}
\Int W_{\rm e}(\cT)\subset \mathcal D_{\rm const}(\cT)\subset W_{\rm e}(\cT)\cap W(\cT).
\end{equation}

Since the interior of a convex set is convex, both $W_{\rm e}(\cT)$ and $\Int W_{\rm e}(\cT)$ are convex sets.
However, the question whether $\mathcal D_{\rm const}(\cT)$ is convex is still open, even if $n=1$.
This problem has been posed explicitly in \cite[p. 213]{Bourin03}.
It is instructive to note that, even if $n=1$, the diagonal set $\mathcal D_{\rm const}(T)$ is not a subset of $W_{\infty}(T),$ and even the inclusion $\mathcal D_{\rm const}(T)\subset W_2(T)$ may not hold, where
  $W_2(\cT)$ stands for  the set of all $\la \in \mathbb C$ such that there exists a two-dimensional subspace $L\subset H$ satisfying
$
P_LTP_L=\la P_L.
$
The next example makes this precise.

\begin{example}
Let  $T=\diag\bigl(-1,\frac{1}{2},\frac{1}{4},\frac{1}{8},\dots\bigr)$.
By Fan's result \cite[Theorem 1]{Fan84} mentioned in the introduction (see also \cite[Appendix]{LoreauxThesis} for corrections), $0\in \mathcal D_{\rm const}(T)$. However, $0\notin W_2(T)$.
Indeed, suppose on the contrary that $0\in W_2(T)$. So there exists a two-dimensional subspace $M$ such that $P_MTP_M=0$.
In particular there exists $x=(0,x_1,x_2,\dots)\in H$ such that $\|x\|=1$ and $\langle Tx,x\rangle=0$.
However, $\langle Tx,x\rangle=\sum_{j=1}^\infty 2^{-j}|x_j|^2>0$, a contradiction.
\end{example}

While we are not able to answer this question about the convexity of $\mD_{\rm const}(\cT)$ either,
we give several statements clarifying  the structure of the set of constant diagonals of operators.
First, we describe the orbit of $\mathcal D_{\rm const}(\cT)$ under compact perturbations.

\begin{proposition}
Let $\cT=(T_1,\dots,T_n)\in B(H)^n$. Then
$$
\bigcup_{\cK\in K(H)^n} \mathcal D_{\rm const}(\cT+\cK)= W_{\rm e}(\cT).
$$
\end{proposition}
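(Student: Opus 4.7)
The plan is to prove the two inclusions separately; both are quite direct given the machinery already assembled.

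For the inclusion $\bigcup_{\cK\in K(H)^n}\mD_{\rm const}(\cT+\cK)\subset W_{\rm e}(\cT)$, I would fix $\la\in\mD_{\rm const}(\cT+\cK)$ for some $n$-tuple $\cK=(K_1,\dots,K_n)$ of compact operators. By definition there exists an orthonormal basis $(u_k)_{k=1}^\infty$ of $H$ with $\langle(\cT+\cK)u_k,u_k\rangle=\la$ for every $k$. Since $(u_k)$ is an orthonormal basis, $u_k\to 0$ weakly, and compactness of each $K_j$ forces $K_ju_k\to 0$ in norm; hence $\langle K_ju_k,u_k\rangle\to 0$ for $j=1,\dots,n$. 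Therefore $\langle\cT u_k,u_k\rangle\to\la$ along the orthonormal sequence $(u_k)$, and by the definition of the joint essential numerical range (and the equivalent characterization noted in Section \ref{prelim}) we obtain $\la\in W_{\rm e}(\cT)$.

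For the reverse inclusion $W_{\rm e}(\cT)\subset\bigcup_{\cK\in K(H)^n}\mD_{\rm const}(\cT+\cK)$, I would simply invoke Theorem \ref{T2.1} with the constant sequence $\la_k=\la$ and the weight sequence $\al_k=1/k$, noting that $(1/k)_{k=1}^\infty\notin\ell_1$. This yields an orthonormal basis $(u_k)_{k=1}^\infty$ of $H$ such that
$$
\bigl\|\langle\cT u_k,u_k\rangle-\la\bigr\|\le \tfrac{1}{k},\qquad k\in\NN.
$$
Now define $\cK=(K_1,\dots,K_n)$ by setting, for $j=1,\dots,n$,
$$
K_ju_k=\bigl(\la_j-\langle T_ju_k,u_k\rangle\bigr)u_k,\qquad k\in\NN,
$$
where $\la=(\la_1,\dots,\la_n)$. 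Each $K_j$ is diagonal with respect to $(u_k)$ with eigenvalues tending to $0$, hence compact. By construction $\langle(\cT+\cK)u_k,u_k\rangle=\la$ for every $k$, so $\la\in\mD_{\rm const}(\cT+\cK)$.

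I do not anticipate any real obstacle; the statement is essentially a rephrasing of Theorem \ref{T2.1} (applied to constant sequences) combined with the standard weak-to-strong convergence property of compact operators. The only small thing to verify is that the diagonal correction operators $K_j$ really are compact, which is immediate since their diagonal entries form null sequences.
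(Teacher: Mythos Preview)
Your proof is correct and follows essentially the same approach as the paper. For the inclusion $\subset$ the paper invokes the standard facts $\mD_{\rm const}(\cT+\cK)\subset W_{\rm e}(\cT+\cK)=W_{\rm e}(\cT)$ rather than unpacking the weak-convergence argument, and for the inclusion $\supset$ the paper quotes Corollary~\ref{C2.2} (whose proof is precisely your construction: apply Theorem~\ref{T2.1} with $\al_k=1/k$ and correct with a diagonal compact tuple).
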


\begin{proof}
For each $\cK\in K(H)^n$ we have
$$
\mathcal D_{\rm const}(\cT+\cK)\subset W_{\rm e}(\cT+\cK)= W_{\rm e}(\cT).
$$
On the other hand, if $\la\in W_{\rm e}(\cT)$ then, by Corollary \ref{C2.2}, there exists $\cK \in K(H)^n$ (even an $n$-tuple of Schatten class operators) such that $\la\in \mD_{\rm const}(\cT+\cK)$.
\end{proof}

Similarly, we are able to locate a subset of $\mathcal D_{\rm const}(\cT)$ invariant under compact perturbations.

\begin{proposition}
Let $\cT=(T_1,\dots,T_n)\in B(H)^n$. Then
$$
\bigcap_{\cK\in K(H)^n} W(\cT+\cK)=\bigcap_{\cK\in K(H)^n} \mathcal D_{\rm const}(\cT+\cK)= \Int W_{\rm e}(\cT).
$$
\end{proposition}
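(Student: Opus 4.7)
The plan is to prove the chain of inclusions
$$
\Int W_{\rm e}(\cT) \subset \bigcap_{\cK\in K(H)^n} \mD_{\rm const}(\cT+\cK) \subset \bigcap_{\cK\in K(H)^n} W(\cT+\cK) \subset \Int W_{\rm e}(\cT).
$$
The first inclusion is immediate from \eqref{inclusion_e} applied to $\cT+\cK$ combined with the invariance $W_{\rm e}(\cT+\cK)=W_{\rm e}(\cT)$; alternatively, Corollary \ref{C4.4} applied to the constant sequence $(\la,\la,\dots)$ delivers $\la\in\mD_{\rm const}(\cT+\cK)$ whenever $\la\in\Int W_{\rm e}(\cT)$, since the Blaschke-type condition \eqref{blaschke1} is trivially satisfied. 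The middle inclusion is just $\mD_{\rm const}(\cA)\subset W(\cA)$ for every tuple $\cA$.

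For the third and substantive inclusion we argue by contraposition: given $\la\notin\Int W_{\rm e}(\cT)$ we construct a compact $\cK$ with $\la\notin W(\cT+\cK)$. If $\la\notin W_{\rm e}(\cT)$ then \eqref{compactp} supplies $\cK^0$ with $\overline{W(\cT+\cK^0)}=W_{\rm e}(\cT)$, so $\la\notin W(\cT+\cK^0)$. The remaining case $\la\in\partial W_{\rm e}(\cT)$ is the meat of the proof. Identifying $\cT$ with the $(2n)$-tuple of selfadjoint operators $\cS=(\Re T_1,\Im T_1,\dots,\Re T_n,\Im T_n)$ and $W_{\rm e}(\cT)$ with a convex subset of $\RR^{2n}$, we pick $\al\in\RR^{2n}\setminus\{0\}$ defining a supporting hyperplane of $W_{\rm e}(\cS)$ at $\la$, and set $c:=\al\cdot\la$. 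Then $W:=\sum_j\al_jS_j$ satisfies $\si_{\rm e}(W)\subset[c,\infty)$, so $W$ has only finitely many eigenvalues in $(-\infty,c)$. Using the spectral projection of $W$ onto $(-\infty,c)$ and placing the correction into a single index $j_0$ with $\al_{j_0}\ne 0$, we find a finite-rank selfadjoint $K_{j_0}^{(0)}$ (with $K_j^{(0)}=0$ for $j\ne j_0$) such that $\tilde W:=\sum_j\al_j(S_j+K_j^{(0)})\ge cI$.

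Now, for any compact $\cK$ extending $\cK^{(0)}$ with $\sum_j\al_j(S_j+K_j)\ge cI$, the equality $\langle(\cS+\cK)x,x\rangle=\la$ forces $\langle(\sum_j\al_j(S_j+K_j)-cI)x,x\rangle=0$ and, by positivity, $x$ lies in the kernel $N$ of that operator. Our goal is therefore to arrange, by a further compact correction of $K_{j_0}^{(0)}$, that $N=\{0\}$. When $\dim N<\infty$ this is routine: adding $\varepsilon P_N/\al_{j_0}$ to $K_{j_0}^{(0)}$ is a finite-rank perturbation making the new $\tilde W-cI$ equal to $\varepsilon I$ on the old $N$ and unchanged on $N^\perp$, where it is positive and injective, so the new kernel is $\{0\}$. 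The main obstacle is the possibility $\dim N=\infty$, when $P_N$ is not compact and this trick fails. We resolve it by replacing $\varepsilon P_N$ with a compact positive selfadjoint operator $A=\sum_{k=1}^{\infty} 2^{-k}\,e_k\otimes e_k$, where $(e_k)_{k=1}^{\infty}$ is an orthonormal basis of $N$, extended by $0$ on $N^\perp$. Since $\tilde W-cI\ge 0$ leaves $N^\perp$ invariant and is injective there, the perturbed operator $\tilde W-cI+A$ decomposes as the orthogonal direct sum $A|_N\oplus(\tilde W-cI)|_{N^\perp}$ of two injective positive selfadjoint operators and is therefore itself injective. No unit vector can annihilate it in the quadratic form sense, whence $\la\notin W(\cS+\cK)$, completing the proof.
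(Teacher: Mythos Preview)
Your overall strategy matches the paper's: separate $\la$ from $W_{\rm e}(\cT)$ by a supporting hyperplane with normal $\al$, then add a positive compact perturbation in the $\al$-direction to make the separation strict. However, your statement that ``$W$ has only finitely many eigenvalues in $(-\infty,c)$'' is not correct: from $\sigma_{\rm e}(W)\subset[c,\infty)$ one only gets that $\sigma(W)\cap(-\infty,c)$ consists of isolated eigenvalues of finite multiplicity, and these may accumulate at $c$. Consequently the spectral projection $E_W((-\infty,c))$ need not be finite rank, and your first correction $K_{j_0}^{(0)}$ cannot in general be chosen finite rank. The fix is harmless---the operator $(cI-W)E_W((-\infty,c))$ is still compact (its eigenvalues $c-\lambda_k$ tend to $0$), so replacing ``finite-rank'' by ``compact'' throughout salvages the argument.

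The paper avoids this two-step correction altogether by applying \eqref{compactp} also in the boundary case $\la\in\partial W_{\rm e}(\cT)$ (not just for $\la\notin W_{\rm e}(\cT)$). After that reduction one has $\overline{W(\cT)}=W_{\rm e}(\cT)$, so (after translation and a real rotation in $\RR^{2n}$) one may assume $\Re T_1\ge 0$ from the outset---no preliminary lifting of eigenvalues below $c$ is needed. Then a single compact perturbation $K_1=\diag(1,2^{-1},2^{-2},\dots)$ makes $\Re T_1+K_1$ strictly positive and finishes the proof. Your two-stage route and kernel analysis are thus correct (once the finite-rank slip is repaired) but more laborious than necessary.
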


\begin{proof}
Taking into account \eqref{inclusion_e}, for each $\cK\in K(H)^n$,
$$
\Int W_{\rm e}(\cT)=\Int W_{\rm e}(\cT+\cK)\subset \mathcal D_{\rm const}(\cT+\cK)
\subset W(\cT+\cK).
$$
Conversely, let $\la\in W_{\rm e}(\cT)\setminus \Int W_{\rm e}(\cT)$. Without loss of generality, by using \eqref{compactp} and considering a suitable compact perturbation, we may assume that $W_{\rm e}(\cT)=\overline{W(\cT)}$. We may also assume, by a suitable translation, that $\la=0$. Now let $\cT=(T_1,\dots,T_n)\in B(H)^n$, $0\in W_{\rm e}(\cT)=\overline{W(\cT)}$, $0\notin\Int W_{\rm e}(\cT)$.
The numerical range of the $n$-tuple $\cT$ can be identified with the numerical range of the $(2n)$-tuple $(\Re T_1,\Im T_1,\dots,\Re T_n,\Im T_n)$ of selfadjoint operators.
By a suitable rotation in $\RR^{2n}\sim\CC^n$ we may assume that $\langle (\Re T_1)x,x\rangle\ge 0$ for all $x\in H$ and $0\in \overline{W(\cT)}$.
Let $K_1=\diag(1,2^{-1},2^{-2},\dots)$ (in any orthonormal basis). Clearly $K_1$ is a compact operator on $H$
and $\langle (\Re T_1 +K_1)x,x\rangle >0$ for all $x\ne 0$. So $0\notin W(\Re T_1+K_1)$ and then $0\notin \mathcal D_{\rm const}(\cT+\cK)$
for the tuple $\cK=(K_1, 0, \dots, 0).$
Thus,
$$
\bigcap_{\cK\in K(H)^n} \mathcal D_{\rm const}(\cT+\cK)\subset
\bigcap_{\cK\in K(H)^n} W(\cT+\cK)\subset
\Int W_{\rm e}(\cT).
$$
\end{proof}

Note that in several specific cases $\mathcal D_{\rm const}(T)$ allows an explicit description.
For instance, by \cite{Bourin03}, if $T \in B(H)$ then $W(T)$ is relatively open (i.e., it is a single
point, an open segment or an open set in $\mathbb C$) if and only if $W(T)=\mathcal D_{\rm const}(T).$
The class of operators with open $W(T)$ is substantial. For instance,
it includes weighted periodic shifts and a number of Toeplitz operators.

\section{Operator-valued diagonals: Blaschke-type condition}

Let $T\in B(H)$ satisfy $W_{\rm e}(T)\supset \overline {\DD}$. Let $(C_k)_{k=1}^{\infty}$ be a sequence of contractions on Hilbert spaces $H_k$
(possibly different from $H$) such that  $\sup_k\|C_k\|<1$.
By \cite[Theorem 2.1]{Bourin03}, the operator $T$ has a "pinching" $\bigoplus_{k=1}^\infty C_k$, i.e., there exist mutually orthogonal subspaces $K_k\subset H$ such that $H=\bigoplus_{k=1}^\infty K_k$ and $P_{K_k}T \vert_{K_k}\usim C_k$ for all $k$.

In Theorem \ref{T3.1}, which can be considered as an operator-valued version of Theorem \ref{T1.1},
 we show that the assumption $\sup_{k \ge 1} \|C_k\|<1$ can be replaced by a much weaker Blaschke-type condition $\sum_{k=1}^\infty (1-\|C_k\|)=\infty$
resembling a similar condition \eqref{blaschke} above. Clearly, the operator-valued version of Theorem \ref{T1.1} is more involved
and its proof requires new arguments. However, the scheme of the proof is similar
to the one used in Section \ref{diagon}.

First, we will need an auxiliary lemma.
\begin{lemma}\label{orthogonal}
Let $T\in B(H)$ with $W_{\rm e}(T)\supset \overline {\DD}$.
Then there exist mutually orthogonal subspaces $H_k\subset H, k\in\NN,$ such that
$$
W_{\rm e}\bigl(P_{H_k}T\vert_{H_k}\bigr)\supset\overline{\DD}, \qquad k\in\NN.
$$
\end{lemma}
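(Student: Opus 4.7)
The plan is to construct, by a single induction along a diagonal enumeration of $\NN^2$, an orthonormal set $\{x_{k,n}: k,n\in\NN\}\subset H$ such that for each fixed $k$, the sequence $(\langle Tx_{k,n},x_{k,n}\rangle)_{n=1}^\infty$ has every point of some prescribed countable dense subset of $\overline\DD$ as a cluster point. Setting $H_k:=\bigvee_{n\in\NN}x_{k,n}$ will then yield mutually orthogonal subspaces, and the relation $\langle P_{H_k}T x,x\rangle=\langle Tx,x\rangle$ for $x\in H_k$ will give $\overline\DD\subset W_{\rm e}(P_{H_k}T\vert_{H_k})$ as required.

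To set things up, fix for each $k\in\NN$ a sequence $(\mu_{k,n})_{n=1}^\infty\subset\overline\DD$ such that every point of some fixed countable dense subset $D\subset\overline\DD$ occurs infinitely often in $(\mu_{k,n})_{n=1}^\infty$. Fix also a bijection $\varphi:\NN\to\NN^2$, write $\varphi(m)=(k_m,n_m)$, and ensure (by reordering within each fiber if necessary) that for every fixed $k$, $n_m\to\infty$ as $m\to\infty$ through $\{m:k_m=k\}$. The construction is then inductive on $m$: having chosen $x_{k_1,n_1},\dots,x_{k_{m-1},n_{m-1}}$, let $M_{m-1}$ denote the orthogonal complement of their (finite-dimensional) span. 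Since $\mu_{k_m,n_m}\in\overline\DD\subset W_{\rm e}(T)$, the characterization of $W_{\rm e}$ recalled in Section \ref{prelim} (every point of $W_{\rm e}(T)$ is approximable by $\langle Tx,x\rangle$ for unit vectors $x$ in any finite-codimensional subspace) supplies a unit vector $x_{k_m,n_m}\in M_{m-1}$ with
\[
\bigl|\langle Tx_{k_m,n_m},x_{k_m,n_m}\rangle-\mu_{k_m,n_m}\bigr|<\tfrac{1}{m}.
\]

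Now set $H_k:=\bigvee_{n\in\NN}x_{k,n}$; these are mutually orthogonal by construction, and $\{x_{k,n}\}_{n=1}^\infty$ is an orthonormal basis of $H_k$. For any $x\in H_k$ one has $\langle P_{H_k}Tx,x\rangle=\langle Tx,x\rangle$, so to verify $\overline\DD\subset W_{\rm e}(P_{H_k}T\vert_{H_k})$ it suffices to show that every $\lambda\in D$ arises as a limit of $\langle Tx_{k,n_j},x_{k,n_j}\rangle$ along some subsequence of the orthonormal basis $(x_{k,n})_{n=1}^\infty$ of $H_k$. By the choice of $(\mu_{k,n})_{n=1}^\infty$, the set $\{n:\mu_{k,n}=\lambda\}$ is infinite; the corresponding indices $m$ with $k_m=k$ and $\mu_{k_m,n_m}=\lambda$ tend to infinity, so $\tfrac1m\to 0$ along them, whence $\langle Tx_{k,n_m},x_{k,n_m}\rangle\to\lambda$. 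This shows $D\subset W_{\rm e}(P_{H_k}T\vert_{H_k})$, and since $W_{\rm e}$ is closed and $D$ is dense in $\overline\DD$, we conclude $\overline\DD\subset W_{\rm e}(P_{H_k}T\vert_{H_k})$.

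The only nontrivial point is the ability to carry out the inductive step inside the finite-codimensional subspace $M_{m-1}$, which is precisely what the hypothesis $\overline\DD\subset W_{\rm e}(T)$ guarantees via the subspace characterization of $W_{\rm e}$. No new technical obstacle arises; the diagonal enumeration merely organizes countably many density requirements (one per $k$) into a single inductive construction.
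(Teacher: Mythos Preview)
Your proposal is correct and follows essentially the same approach as the paper: both construct a single orthonormal family by induction, using the finite-codimension characterization of $W_{\rm e}(T)$ to pick each new vector in the orthogonal complement of the previous ones, and then define $H_k$ as the closed span of the $k$-th fiber. The paper's bookkeeping uses a bijection $\NN\to\NN^3$ (encoding target point, accuracy, and subspace index separately) together with a dense subset of $\TT$ and convexity of $W_{\rm e}$, whereas you use a bijection $\NN\to\NN^2$ with a dense subset of $\overline{\DD}$ hit infinitely often; these are equivalent packagings of the same idea.
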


\begin{proof}
Let $f:\NN\to\NN^3$ be a bijection, and write  $f(k)=(f_1(k),f_2(k),f_3(k))$. Let $(w_s)_{s=1}^{\infty}\subset\TT$ be a sequence dense in $\TT$.
Inductively we construct a sequence of mutually orthogonal unit vectors $(x_k)_{k=1}^{\infty}\in H$ such that
$$
\bigl|\langle Tx_k,x_k\rangle-w_{f_1(k)}\bigr|<\frac{1}{f_2(k)}.
$$
Now suppose that  $(x_k)_{k=1}^{\infty}\subset H$ is constructed in this way. For $m\in\NN$ define
$$
H_m=\bigvee\{x_k:f_3(k)=m\},
$$
and note that the subspaces $H_m, m\in\NN,$ are mutually orthogonal. Let $s,m\in\NN$ be fixed.
Then $\{u_j:=x_{f^{-1}(s,j,m)}: j \ge 1\}$ form an orthonormal sequence in $H_m$ such that
$$
\bigl|\langle Tu_j,u_j\rangle-w_s\bigr|<\frac{1}{j}
$$
for all $j\in\NN$. Thus $w_s\in W_{\rm e}(P_{H_m}T\vert_{H_m})$ and, since this holds for all $s, m \in \NN,$ we have $W_{\rm e}(P_{H_m}T\vert_{H_m})\supset\overline{\DD}$ for all $m\in\NN$.
\end{proof}

So everything is prepared for the proof of Theorem \ref{T3.1} and we give it below.
\bigskip

\emph{Proof of Theorem \ref{T3.1}}\,\,\,
Let $H_m, m\in\NN,$ be the subspaces given by Lemma \ref{orthogonal},
and let $(y_m)_{m=1}^\infty$ be a sequence of unit vectors in $H$ such that $\bigvee_my_m=H$. Using the assumption, we can find mutually disjoint sets $A_m, m\in\NN,$ such that $\bigcup_{m=1}^\infty A_m=\NN$ and
$$
\sum_{k\in A_m}(1-\|C_k\|)=\infty
$$
for all $m\in\NN$.
Next we construct mutually orthogonal subspaces $K_k$, $k\in\NN,$ such that
$\dim K_k/(H_k\cap K_k)\le 1$, $P_{K_k}T\vert_{K_k}\usim C_k$ and
\begin{equation}\label{(1)}
\ln\dist^2\Bigl\{y_m,\bigvee_{k\le N}K_k\Bigr\}\le-\sum_{k\le N, k\in A_m}\frac{1-\|C_k\|}{16\|T\|}
\end{equation}
for all $m, N \in\NN$.
Note that as a consequence $\bigoplus_{k=1}^\infty K_k=H$. Indeed, for each $m\in\NN$ we have
$$
\lim_{N\to\infty}\ln\dist^2\Bigl\{y_m,\bigvee_{k\le N}K_k\Bigr\}\le
-\lim_{N\to\infty}\sum_{k\le N, k\in A_m}\frac{1-\|C_k\|}{16\|T\|}=-\infty.
$$
So $\dist\Bigl\{y_m,\bigvee_{k\in\NN}K_k\Bigr\}=0$ and $y_m\in\bigvee_{k\in\NN}K_k$. Hence
$H=\bigoplus_{k=1}^\infty K_k$.

The sequence of  mutually orthogonal subspaces $(K_k)_{k=1}^{\infty}$
satisfying \eqref{(1)} will be constructed inductively.
Set formally $K_0=\{0\},$ let $N\in\NN$ and suppose that the subspaces $K_k, k\le N-1,$ have already been constructed. Then $N\in A_m$
for some $m \in \mathbb N.$
Write $y_m=a+tb$ where
$$a\in\bigvee_{k\le N-1}K_k, \quad b\perp \bigvee_{k\le N-1}K_k, \quad \|b\|=1$$
and $t=\dist\Bigl\{y_m,\bigvee_{k\le N-1}K_k\Bigr\}\le 1.$

Setting $$H_N'=H_N\cap\{b,Tb,T^*b\}^\perp\cap\bigcap_{k=1}^{N-1}K_k^\perp,$$ observe that $\overline{\DD} \subset W_{\rm e}(P_{H_N'}T\vert_{H_N'})$
since $\overline{\DD} \subset W_{\rm e}(P_{H_N}T\vert_{H_N})$ and $H_N'$ is a subspace of finite codimension in $H_N$.

Now let $L_N$ be given by the theorem assumptions. Fix any unit vector $x\in L_N,$ and let $P\in B(L_N)$ be the orthogonal projection onto the one-dimensional subspace generated by $x$. If
$$
\rho=:\sqrt{\frac{1-\|C_N\|}{16\|T\|}}.
$$
then $\rho\le\frac{1}{4}$ and $\sqrt{1-\rho^2}\ge \frac{1}{2}$. Consider
\begin{align*}
C_N'=&(I-P)C_N(I-P)+\frac{1}{\sqrt{1-\rho^2}}(I-P)C_NP\\
+&\frac{1}{\sqrt{1-\rho^2}}PC_N(I-P)
 +\frac{PC_NP-\rho^2\langle Tb,b\rangle P}{1-\rho^2}\\
=&C_N+\Bigl(\frac{1}{\sqrt{1-\rho^2}}-1\Bigr)(I-P)C_NP+\Bigl(\frac{1}{\sqrt{1-\rho^2}}-1\Bigr)PC_N(I-P)\\
+&\Bigl(\frac{1}{1-\rho^2}-1\Bigr)PC_NP-\frac{\rho^2}{1-\rho^2}\langle Tb,b\rangle P,
\end{align*}
where $I$ denotes the identity operator on $L_N$.
We have
\begin{align*}
\|C_N'\|\le&
\|C_N\|+2\Bigl(\frac{1}{\sqrt{1-\rho^2}}-1\Bigr)+\Bigl(\frac{1}{1-\rho^2}-1\Bigr)+\frac{\rho^2}{1-\rho^2}\|T\|\\
\le&
\|C_N\|+2\frac{1-\sqrt{1-\rho^2}}{\sqrt{1-\rho^2}}+\frac{\rho^2}{1-\rho^2}+\frac{\rho^2}{1-\rho^2}\|T\|\\
\le&
\|C_N\|+4\bigl(1-\sqrt{1-\rho^2}\bigr)+\frac{2\|T\|\rho^2}{1-\rho^2}\\
\le&
\|C_N\|+4\rho^2+4\rho^2\|T\|\\
\le&
\|C_N\|+8\rho^2\|T\|\\
\le&
\|C_N\|+\frac{1-\|C_N\|}{2}<1.
\end{align*}
So, by for example Bourin's ``pinching'' theorem \cite[Theorem 2.1]{Bourin03} (mentioned in the introduction), there exists a subspace $K_N'\subset H_N'$ such that $P_{K_N'}T\vert_{K_N'}\usim C_N'$.
Thus there exists a unitary operator $U:L_N\to K_N'$ such that
$$
P_{K_N'}T\vert_{K_N} = U C_N'U^{-1}.
$$
Set
$$
v=\sqrt{1-\rho^2} Ux+\rho b.
$$
Since $Ux\in K_N', K_N' \subset H_N'$ and $H_N'\perp b$, we have $\|v\|=1$ and $v\perp U(I-P)L_N$.
Let $$K_N:=U(I-P)L_N\vee \{v\},$$ and
let $V:L_N\to K_N$ be defined by $$V\vert_{(I-P)L_N}:=U\vert_{(I-P)L_N} \qquad \text{and} \qquad Vx:=v.$$
By construction $\dim K_N/(K_N\cap H_N)\le 1$.
Clearly $V$ is a unitary operator, and $K_N\perp\bigvee_{k\le N-1}K_k$. For any $z=u+sv$, where $u\in (I-P)L_N$ and $s\in\CC,$ we have
\begin{align*}
\bigl\langle V^{-1}(P_{K_N}&T\vert_{K_N}) Vz,z\bigr\rangle\\
=&
\langle T Vz,Vz\rangle=
\bigl\langle T(Uu+sv),Uu+sv\bigr\rangle\\
=&\bigl\langle T(Uu+s\sqrt{1-\rho^2} Ux+s\rho b), Uu+s\sqrt{1-\rho^2} Ux+s\rho b\bigr\rangle\\
=&\bigl\langle T(Uu+s\sqrt{1-\rho^2} Ux), Uu+s\sqrt{1-\rho^2} Ux\bigr\rangle
+s^2\rho^2\langle Tb,b\rangle\\
=&
\bigl\langle C_N'(u+s\sqrt{1-\rho^2}x),u+s\sqrt{1-\rho^2}x\bigr\rangle+s^2\rho^2\langle Tb,b\rangle\\
=&\bigl\langle (I-P)C_N'(I-P)z,z\bigr\rangle+\sqrt{1-\rho^2}\bigl\langle (I-P)C_N'Pz,z\bigr\rangle\\
 +&\sqrt{1-\rho^2}\bigl\langle PC_N'(I-P)z,z\bigr\rangle
+(1-\rho^2)\langle PC_N'Pz,z\rangle+\rho^2\langle Tb,b\rangle\langle Pz,z\rangle\\
=&
\langle C_Nz,z\rangle.
\end{align*}
Hence $P_{K_N}T\vert_{K_N}\usim C_N$.

Moreover, since  $\langle b,v\rangle=\rho$ and
$$
\dist^2\Bigl\{y_m,\bigvee_{k\le N}K_k\Bigr\}=t^2\dist^2\Bigl\{b,\bigvee v\Bigr\}=t^2(1-\rho^2)=
t^2\Bigl(1-\frac{1-\|C_N\|}{16 \|T\|}\Bigr),
$$
we infer that
$$
\ln\frac{\dist^2\Bigl\{y_m,\bigvee_{k\le N}K_k\Bigr\}}{\dist^2\Bigl\{y_m,\bigvee_{k\le N-1}K_k\Bigr\}}
=\ln\Bigl(1-\frac{1-\|C_N\|}{16 \|T\|}\Bigr)\le
-\frac{1-\|C_N\|}{16\|T\|}
$$
and thus
$$
\ln \dist^2\Bigl\{y_m,\bigvee_{k\le N}K_k\Bigr\}\le
-\sum_{k\le N, k\in A_m}\frac{1-\|C_k\|}{16\|T\|}.
$$
This finishes the proof. \hfill $\Box$

\bigskip

Replacing the numerical range condition $W_{\rm e}(T)\supset\overline{\DD}$ in Theorem \ref{T3.1}
by the spectral assumption $\widehat\si(T)\supset\overline{\DD},$ we can put
 Theorem \ref{T3.1} in a more demanding context of tuples of powers of $T.$
 To this aim we will need the next lemma.

\begin{lemma}\label{L3.2}
Let $T\in B(H)$ satisfy $\widehat\si(T)\supset\overline{\DD}$. Let $n\in\NN$, let $L$ be a separable Hilbert space, and let $C,A_1,\dots,A_n\in B(L)$ be such that $\|C\|<1$ and $\|A_j\|\le\frac{(1-\|C\|)^n}{n 2^{2n+4}}, j=1,\dots,n$. Then there exists a subspace $K\subset H$ such that
$$
P_K(T,T^2,\dots,T^n)\vert_K\usim (C+A_1, C^2+A_2,\dots,C^n+A_n).
$$
\end{lemma}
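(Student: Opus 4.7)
The goal is to realize the tuple $\mA:=(C+A_1,\dots,C^n+A_n)$ as a compression $P_K(T,T^2,\dots,T^n)|_K$ for a suitable $K\subset H$. My plan is to produce a normal operator $\hat U$ on some Hilbert space $\tilde L\supset L$ such that (i) $P_L\hat U^k|_L=C^k+A_k$ for $k=1,\dots,n$, and (ii) $\sigma(\hat U)\subset\Int\widehat\sigma(T)$. Given such $\hat U$, Theorem~\ref{lambdawe} yields $(\la,\dots,\la^n)\in W_\infty(T,T^2,\dots,T^n)$ for each $\la\in\sigma(\hat U)$; after a spectral discretization of $\hat U$ (the continuous spectral part being approached by finitely-supported spectral measures), the tuple $(\hat U,\hat U^2,\dots,\hat U^n)$ is captured by $\conv\mM(W_\infty(T,T^2,\dots,T^n))$, and Proposition~\ref{propm} provides a subspace $\tilde K\subset H$ with $P_{\tilde K}(T,T^2,\dots,T^n)|_{\tilde K}\usim(\hat U,\dots,\hat U^n)$. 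Taking $K\subset\tilde K$ to correspond to $L\subset\tilde L$ under this unitary equivalence then gives the desired compression.

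To build $\hat U$, I would start from Sz.-Nagy's strict unitary power dilation. Set $r=(1+\|C\|)/2\in(\|C\|,1)$, so the slack is $1-r=(1-\|C\|)/2$. Since $C/r$ has norm strictly less than $1$, it admits a unitary power dilation: there exist $\tilde L\supset L$ and a unitary $U\in B(\tilde L)$ with $C^k=P_L(rU)^k|_L$ for all $k\ge 0$. The operator $rU$ is normal and $\sigma(rU)\subset r\,\TT\subset\DD\subset\Int\widehat\sigma(T)$, so condition (ii) is already satisfied for the unperturbed choice $\hat U=rU$, and (i) holds with $A_k\equiv 0$. The next step is to perturb $rU$ within the available spectral slack so that (i) acquires the prescribed corrections $A_k$.

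The controlling estimate is Lemma~\ref{P1.3}: around each point $(\la,\dots,\la^n)$ with $\la\in r\,\TT$, a ball of radius $\ge (1-r)^n/4^n=(1-\|C\|)^n/(2\cdot 4)^n$ lies inside $C_{1-r,\la}$, which by Theorem~\ref{lambdawe} and convexity of $W_{\rm e}$ is contained in $W_\infty(T,T^2,\dots,T^n)$. This is the admissible perturbation budget for the joint-spectrum picture. The size bound $\|A_k\|\le(1-\|C\|)^n/(n\cdot 2^{2n+4})=(1-\|C\|)^n/(n\cdot 16\cdot 4^n)$ matches this budget: the factor $4^n$ comes from Lemma~\ref{P1.3}, the factor $16$ mirrors the constant in the proof of Theorem~\ref{T3.1} (where compressions of $T$ onto codimension-$1$ enlargements produced an analogous denominator), and the extra factor $n$ is what permits distributing the total budget across the $n$ simultaneous compression equations in (i).

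The main obstacle is the explicit construction of the perturbation: one must produce a small $B$ on $\tilde L$ for which $\hat U=rU+B$ remains normal with $\sigma(\hat U)\subset\Int\widehat\sigma(T)$ and for which the $n$ coupled operator equations $P_L\bigl[(rU+B)^k-(rU)^k\bigr]|_L=A_k$, $k=1,\dots,n$, hold simultaneously. The norm bound on $\|A_k\|$ is tailored precisely to guarantee this solvability by iteratively modifying the spectral measure of $rU$ at each level $k$, using at most a $1/n$ fraction of the radial slack $(1-\|C\|)^n/4^n$ per equation.
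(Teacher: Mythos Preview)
Your plan has the right overall architecture (dilate $C$, land inside $W_\infty(T,\dots,T^n)$ via Theorem~\ref{lambdawe}, invoke Proposition~\ref{propm}, then cut down from $\tilde L$ to $L$), but the central step is a genuine gap rather than a detail to be filled in. You reduce everything to finding a \emph{single} normal operator $\hat U=rU+B$ on $\tilde L$ with $\sigma(\hat U)\subset\DD$ and
\[
P_L\bigl[(rU+B)^k-(rU)^k\bigr]\big|_L=A_k,\qquad k=1,\dots,n.
\]
This is a coupled, nonlinear system of $n$ operator equations in one unknown $B$, together with the side constraints that $rU+B$ stay normal and that its spectrum remain in the open disc. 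You do not construct $B$; the sentence about ``iteratively modifying the spectral measure of $rU$ at each level $k$'' is not a construction, and there is no reason to expect such a $B$ to exist for arbitrary prescribed $A_1,\dots,A_n$ subject only to the stated norm bound. Normality of $rU+B$ alone is already highly restrictive.

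The paper avoids this problem entirely by \emph{not} insisting on a power tuple at the intermediate stage. One takes the unitary dilation $U$ of $C/c$ (with $c=\|C\|$) on $M\supset L$, extends each $A_j$ to $\tilde A_j\in B(M)$ by zero on $M\ominus L$, and works directly with the tuple
\[
\bigl(cU+\tilde A_1,\,(cU)^2+\tilde A_2,\,\dots,\,(cU)^n+\tilde A_n\bigr).
\]
Because $P_L(cU)^j|_L=C^j$ and $P_L\tilde A_j|_L=A_j$, this tuple compresses to $(C+A_1,\dots,C^n+A_n)$ trivially, by linearity; no equations need to be solved. The real work is then showing that this (non-power) tuple lies in $\conv\mM(W_\infty(T,\dots,T^n))$, which the paper does by applying the Weyl--von Neumann theorem to $cU$ and to each $\Re\tilde A_j$, $\Im\tilde A_j$ separately, and assembling the pieces as a convex combination with weights $c/c'$, $d/d'$, $\e'$ chosen so that Corollary~\ref{C4.10} places every diagonal entry inside $W_\infty(T,\dots,T^n)$. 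The factor $n$ in the bound on $\|A_j\|$ is needed not to ``distribute the budget across $n$ equations'' as you suggest, but because the convex combination has on the order of $n$ summands coming from the $A_j$'s, each consuming a share $d/d'$ of the total weight.
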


\begin{proof}
Set
$$c:=\|C\|, \quad d:=\frac{(1-c)^n}{n 2^{2n+4}}, \quad c':=c+\frac{c(1-c)^n}{2^{n+1}}, \quad \text{and} \quad d':=\frac{1}{2^{n+1}}.$$

Note that
\begin{equation}\label{(2)}
\frac{c}{c'}+2n\frac{d}{d'}<1.
\end{equation}
Indeed, since $\frac{1}{1+a}<1-\frac{a}{2}$ for all $a\in [0,1),$ we have
$$
\frac{1}{1+\frac{(1-c)^n}{2^{n+1}}} +\frac{(1-c)^n}{2^{n+2}}<1,
$$
which is equivalent to \eqref{(2)}.

Choose
\begin{equation}\label{combi1}
\e':=\frac{1}{4n}\Bigl(1-\frac{c}{c'}-2n\frac{d}{d'}\Bigr)>0 \quad \text{and} \quad  \e \in \Bigl(0, \frac{\e'}{n2^n}\Bigr).
\end{equation}
Let $U$ be the minimal unitary (power) dilation of the contraction $\frac{C}{c}$
on the separable Hilbert space $M\supset L.$
Extend the operators $A_1,\dots,A_n$ to $\tilde A_1,\dots,\tilde A_n\in B(M)$ by defining
$\tilde A_j\vert_L=A_j\vert_L$ and $\tilde A_j\vert_{M\ominus L}=0$.
We show that $\bigl(cU+\tilde A_1,\dots,(cU)^n+\tilde A_n\bigr)$ is a convex combination of $n$-tuples belonging to the set $\cM(W_\infty(\cT))$
(defined in Section \ref{prelim}).

By  Weyl-von Neumann's theorem, $cU$ can be written as $cU=D+K,$
where $D\in B(M)$ is a diagonal operator with
$\sigma(D)=\sigma_{\rm e}(D)=\sigma_{\rm e}(cU)$, so the entries of $D$ are of moduli
$c$,
and $K\in B(M)$ is a compact operator, $\|K\|\le\e$.
(See e.g. \cite[Corollary 38.4]{Conway}, where the proof is given for a selfadjoint version of the theorem.) For $j=1,\dots,n$ let $K_j=(cU)^j-D^j$.
We have
$$
K_j=\sum_{s=0}^{j-1} (cU)^s K D^{j-s-1}, \qquad j \ge 1.
$$
So $K_j$ is a compact operator for each $j=1,\dots,n$ and $\|K_j\|\le n\e$.

Similarly, we have $\sup_{1\le j\le n}\|A_j\|\le d,$ so again by Weyl-von Neumann's theorem (\cite[Corollary 38.4]{Conway}), for every $j$ the operators $\Re\tilde A_j$ and $\Im \tilde A_j$ can be written as $\Re\tilde A_j=D_j'+K_j'$ and $\Im\tilde A_j=D_j''+K_j''$,
where $D_j'$ and $D_j''$ are diagonal operators with entries of moduli not exceeding $d,$ and $K_j'$ and $K_j''$ are compact selfadjoint operators with $\|K_j'\|\le\e$ and $\|K_j''\|\le\e$.
Thus we can write
\begin{align}\label{combi2}
\bigl(cU+\tilde A_1,&\dots,(cU)^n+\tilde A_n\bigr)\\
=&
\frac{c}{c'}\Bigl(\frac{c'}{c}\bigl(D,\dots,D^n\bigr)\Bigr)\notag \\
+&\sum_{j=1}^n\frac{d}{d'}\Bigl(\frac{d'}{d}\bigl(\underbrace
{0,\dots,0}_{j-1},D_j'+iD_j'',0,\dots,0\bigr)\Bigr)\notag \\
+&\sum_{j=1}^n\e'
\Bigl(\frac{1}{\e'}\bigl(\underbrace{0,\dots,0}_{j-1},\Re K_j+i\Im K_j+K_j'+iK_j'',0,\dots,0\bigr)\Bigr).\notag
\end{align}
Write for short $\cT=(T,\dots,T^n)$.
By construction,  $D= {\rm diag}[(d_k)_{k=1}^\infty]$ with $|d_k|=c$ for all $k.$
Note that if  $z\in\CC$, $|z|=c,$ then
$$
\Bigl\|\frac{c'}{c}(z,z^2,\dots,z^n)-(z,z^2,\dots,z^n)\Bigr\|=
\Bigl(\frac{c'}{c}-1\Bigr)\cdot c=c'-c=
\frac{(1-c)^nc}{2^{n+1}}.
$$
Hence, since $\mathbb D \subset \widehat \sigma(T),$  we infer by Corollary \ref{C4.10} that
$$\frac{c'}{c}\bigl(D,\dots,D^n\bigr)=\diag \Bigl [\frac{c'}{c}(d_k, \dots, d_k^n)_{k=1}^{\infty} \Bigr]$$
is a jointly diagonal $n$-tuple with all of its entries $\frac{c'}{c} (d_k, \dots, d_k^n)$ in $W_{\infty}(\cT).$
So
$\frac{c'}{c}(D,\dots,D^n\bigr)\in\cM(W_\infty(\cT))$ by the definition of $\cM(W_\infty(\cT)).$

Similarly,
$$
\frac{d'}{d}\bigl(\underbrace
{0,\dots,0}_{j-1},D_j',0,\dots,0\bigr)
$$
is a diagonal tuple. The moduli of diagonal entries of $\frac{d'}{d} D_j'$ do not exceed $d'=\frac{1}{2^{n+1}}.$
Hence, by the spectral assumption $\overline{\mathbb D} \subset \widehat \sigma(T)$ and Corollary \ref{C4.10},
$$
\frac{d'}{d}\bigl(\underbrace
{0,\dots,0}_{j-1},D_j',0,\dots,0\bigr)\in\cM(W_\infty(\cT)).
$$
In the same way
$$
\frac{d'}{d}\bigl(\underbrace
{0,\dots,0}_{j-1},D_j'',0,\dots,0\bigr)\in\cM(W_\infty(\cT)).
$$
Finally, $\Re K_j, \Im K_j, K_j',K_j'', 1\le j \le n,$ are compact selfadjoint, and thus unitarily diagonalisable operators.
The moduli of diagonal entries of these operators do not exceed $\e$. Thus the tuples
\begin{align*}
&\frac{1}{\e'}\bigl(\underbrace{0,\dots,0}_{j-1},\Re K_j,0,\dots,0\bigr),\qquad
\frac{1}{\e'}\bigl(\underbrace{0,\dots,0}_{j-1},i\Im K_j,0,\dots,0\bigr),\\
&\frac{1}{\e'}\bigl(\underbrace{0,\dots,0}_{j-1}, K_j',0,\dots,0\bigr)\qquad\hbox{and}\qquad
\frac{1}{\e'}\bigl(\underbrace{0,\dots,0}_{j-1},iK_j'',0,\dots,0\bigr)
\end{align*}
are diagonal (each in its own basis),  and operator norms of their entries are not larger than $\frac{\e}{\e'}<\frac{1}{2^n}$. Hence the tuples belong to $\cM(W_\infty(\cT))$.

 Thus, in view of \eqref{combi1} and \eqref{combi2}, $$\bigl(cU+\tilde A_1,\dots,(cU)^n+\tilde A_n\bigr) \in \conv\, \cM(W_\infty(\cT)).$$
Now Proposition \ref{propm} implies that there exists a subspace $K'\subset H$ such that
$$
\bigl(cU+\tilde A_1,\dots,(cU)^n+\tilde A_n\bigr)\usim P_{K'}(T,\dots,T^n)\vert_{K'},
$$
that is $V(cU+\tilde A_j)V^{-1} =  P_{K'} T^j\vert_{K'}$ for some unitary $V: M \to K'$ and all $1 \le j\le n.$
Since
$$
P_L\bigl(cU+\tilde A_1,\dots,(cU)^n+\tilde A_n\bigr)\vert_L=
(C+A_1,\dots,C^n+A_n),
$$
we let $K=V(L)$ and infer that $P_K T^j \vert_K=\tilde V^{-1} (C^j+A_j) \tilde V, 1 \le j \le n,$ where $\tilde V = V \vert_L: L \to K,$
hence the theorem follows.
\end{proof}

For a sequence of Hilbert space contractions $(C_k)_{k=1}^{\infty}$ with norms not approaching $1$ too fast,
the following statement yields pinchings $(C_k,\dots,C_k^n)$ for a tuple $(T,\dots,T^n), T \in B(H),$
if the spectrum of $T$ is sufficiently large. Its assumptions are close to be optimal even if $n=1,$ see \cite{Bourin03}.

\begin{theorem}\label{theoremspectral}
Let $T\in B(H)$ be such that $\widehat\si(T)\supset\overline{\DD}$, and let $n\in\NN$. Let $L_k, k \in \mathbb N,$ be separable Hilbert spaces, and let
$C_k\in B(L_k), k \in \mathbb N,$ satisfy $\sum_{k=1}^\infty (1-\|C_k\|)^n=\infty$.
Then there are mutually orthogonal subspaces $K_k, k \in \mathbb N,$ of $H$ such that
$$
H=\bigoplus_{k=1}^\infty K_k \qquad \text{and} \qquad P_{K_k}(T,\dots,T^n)\vert_{K_k}\usim (C_k,\dots,C_k^n)
$$
for all $k\in\NN$.
\end{theorem}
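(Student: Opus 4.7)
The plan is to mimic the proof of Theorem \ref{T3.1}, substituting Lemma \ref{L3.2} for Bourin's pinching theorem and strengthening the orthogonality conditions so that the cross terms at every power vanish simultaneously. Fix a sequence $(y_m)_{m=1}^\infty$ of unit vectors with $\bigvee_m y_m = H$ and a partition $\NN = \bigsqcup_m A_m$ with $\sum_{k\in A_m}(1-\|C_k\|)^n = \infty$ for every $m\in\NN$. The goal is to inductively build mutually orthogonal subspaces $K_k\subset H$ with $P_{K_k}(T,\dots,T^n)\vert_{K_k}\usim(C_k,\dots,C_k^n)$ satisfying, with $M_N:=\bigvee_{k\le N}K_k$,
$$
\ln\dist^2\{y_m,M_N\}\le -c\sum_{k\le N,\,k\in A_m}(1-\|C_k\|)^n\qquad(m,N\in\NN)
$$
for a constant $c=c(n,\|T\|)>0$. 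Summability in each $A_m$ then forces $\dist\{y_m,M_N\}\to 0$, so $\bigoplus_{k=1}^\infty K_k=H$.

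At the inductive step, with $N\in A_m$, decompose $y_m=a+tb$ where $a\in M_{N-1}$, $b\perp M_{N-1}$, $\|b\|=1$, $t=\dist\{y_m,M_{N-1}\}$. Fix a unit vector $x\in L_N$, let $P$ be the rank-one projection onto $\operatorname{span}\{x\}$, and set $\rho^2:=c_0(1-\|C_N\|)^n$ for $c_0=c_0(n,\|T\|)$ sufficiently small. Motivated by the scalar computation underlying the proof of Theorem \ref{T3.1}, define for $j=1,\dots,n$ the perturbation
$$
A_j:=\Bigl(\tfrac{1}{\sqrt{1-\rho^2}}-1\Bigr)\bigl((I-P)C_N^jP+PC_N^j(I-P)\bigr)+\Bigl(\tfrac{1}{1-\rho^2}-1\Bigr)PC_N^jP-\tfrac{\rho^2\langle T^jb,b\rangle}{1-\rho^2}P.
$$
Elementary inequalities give $\|A_j\|\le 4\rho^2(1+\|T\|^n)$, so a suitable choice of $c_0$ secures $\|A_j\|\le(1-\|C_N\|)^n/(n2^{2n+4})$, which is exactly the size bound demanded by Lemma \ref{L3.2}. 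Applying a finite-codimension refinement of Lemma \ref{L3.2} inside the codimension-finite subspace $M_{N-1}^\perp\cap\{T^jb,(T^*)^jb:1\le j\le n\}^\perp$ yields a subspace $K_N'$ together with a unitary $U\colon L_N\to K_N'$ such that $P_{K_N'}T^j\vert_{K_N'}=U(C_N^j+A_j)U^{-1}$ for every $j$.

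Now set $v:=\sqrt{1-\rho^2}\,Ux+\rho b$ and $K_N:=U(I-P)L_N\vee\operatorname{span}\{v\}$, and define a unitary $V\colon L_N\to K_N$ by $V\vert_{(I-P)L_N}:=U\vert_{(I-P)L_N}$ and $Vx:=v$. The orthogonality $K_N'\perp\{T^jb,(T^*)^jb:1\le j\le n\}$ is precisely what is needed so that every cross term between the $U$-part and the $b$-part vanishes in the expansion of $\langle T^jVz,Vz\rangle$ for $z=u+sx\in L_N$; the $(I-P)$-$P$ block structure of $A_j$ was chosen so that what remains collapses to $\langle C_N^jz,z\rangle$. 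A direct computation, parallel to the one for a single power in Theorem \ref{T3.1}, then yields $V^{-1}P_{K_N}T^jV=C_N^j$ for every $j=1,\dots,n$. Finally, $\langle b,v\rangle=\rho$ and $K_N\perp M_{N-1}$ give $\dist^2\{y_m,M_N\}=t^2(1-\rho^2)$, which together with $\ln(1-\rho^2)\le -\rho^2$ yields the required logarithmic estimate with $c=c_0$.

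The main technical obstacle is the codimension-finite refinement of Lemma \ref{L3.2}: its underlying Proposition \ref{propm} must be executed inside a prescribed finite-codimension subspace of $H$. This in turn rests only on the characterisation that every $\la\in W_\infty(\mT)$ is realised by unit vectors in arbitrary subspaces of finite codimension (\cite[Proposition 5.4]{Muller19}); since the Weyl--von Neumann diagonalisation in the proof of Lemma \ref{L3.2} yields compact pieces which can be absorbed by shrinking to finite codimension, the entire construction goes through verbatim within any prescribed codimension-finite subspace, completing the argument.
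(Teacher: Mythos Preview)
Your sketch follows the paper's architecture almost exactly: the perturbations $A_j$, the choice of $\rho$, the vector $v$, the subspace $K_N$, the unitary $V$, and the distance bookkeeping all match. There is, however, one structural omission that makes the inductive step break down.

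You claim that Lemma \ref{L3.2} can be applied inside the ``codimension-finite subspace'' $M_{N-1}^\perp\cap\{T^jb,(T^*)^jb:1\le j\le n\}^\perp$, and your final paragraph explicitly invokes the finite-codimension characterisation of $W_\infty$ to justify this. But $M_{N-1}=\bigvee_{k\le N-1}K_k$ need not be finite-dimensional: the spaces $L_k$, and hence $K_k$, may be infinite-dimensional. So $M_{N-1}^\perp$ is in general of infinite codimension in $H$, and there is no reason why the tuples produced in the proof of Lemma \ref{L3.2} should lie in $W_\infty\bigl(P_{M_{N-1}^\perp}(T,\dots,T^n)\vert_{M_{N-1}^\perp}\bigr)$. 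The finite-codimension refinement you propose simply does not apply here.

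The paper repairs exactly this point by importing the device of Lemma \ref{orthogonal}: before the induction begins, one chooses mutually orthogonal infinite-dimensional subspaces $H_m\subset H$ so that the essential numerical range of $P_{H_m}(T,\dots,T^n)\vert_{H_m}$ is large enough for the proof of Lemma \ref{L3.2} to run inside each $H_m$. One then forces $K_k$ to satisfy $\dim K_k/(H_k\cap K_k)\le 1$ at every step. Consequently, inside $H_N$ the subspace $H_N'=H_N\cap\{b,T^jb,T^{*j}b\}^\perp\cap\bigcap_{k<N}K_k^\perp$ has finite codimension in $H_N$ (each previous $K_k$ protrudes from $H_k$ by at most one dimension), and the version of Lemma \ref{L3.2} for $P_{H_N'}T\vert_{H_N'}$ becomes available. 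If you insert this preliminary decomposition and track the additional constraint $\dim K_k/(H_k\cap K_k)\le 1$, your argument goes through; without it, the induction cannot proceed past the first infinite-dimensional $L_k$.
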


\begin{proof}
The proof is analogous to that of Theorem \ref{T3.1}, so we present it only briefly.

As in the proof of Theorem \ref{T3.1}, one can find mutually orthogonal subspaces
$H_m\subset H, m\in\NN,$ such that $$W_{\rm e}\bigl(P_{H_m}(T,\dots,T^n)\vert_{H_m}\bigr) \supset \overline{\mathbb D}.
$$

Let $(y_m)_{m=1}^\infty$ be a sequence of unit vectors in $H$ such that $\bigvee_my_m=H$. We can find a sequence of mutually disjoint sets $(A_m)_{m=1}^\infty$ such that $\bigcup_{m=1}^\infty A_m=\NN$ and
$$
\sum_{k\in A_m}(1-\|C_k\|)^n=\infty
$$
for all $m\in\NN$.
We construct mutually orthogonal subspaces $K_k\subset H, k\in\NN,$ satisfying $P_{K_k}(T,\dots,T^n)\vert_{K_k}\usim (C_k,\dots,C_k^n)$, $\dim K_k/(H_k\cap K_k)\le 1$ and
\begin{equation*}
\ln\dist^2\Bigl\{y_m,\bigvee_{k\le N}K_k\Bigr\}\le-\sum_{k\le N, k\in A_m}\frac{(1-\|C_k\|)^n}{n2^{2n+7}\|T\|^n}
\end{equation*}
for all $m, N\in\NN$.

Set formally $K_0=\{0\},$ let $N\in\NN$ and suppose that the subspaces $K_k, k\le N-1,$ have already been constructed. Let $N\in A_m$.
Let $y_m=a+tb$ where $$a\in\bigvee_{k\le N-1}K_k, \quad b\perp \bigvee_{k\le N-1}K_k, \quad \|b\|=1$$ and $t=\dist\Bigl\{y_m,\bigvee_{k\le N-1}K_k\Bigr\}\le 1.$

Define $$H_N'=H_N\cap\{b,T^jb,T^{*j}b:j=1,\dots,n\}^\perp\cap\bigcap_{k=1}^{N-1}K_k^\perp,$$ and note that $W_{\rm e}(P_{H_N'}T\vert_{H_N'})\supset\overline{\DD}$ since $H_N$ is of finite codimension in $H_N'$.

Fix any unit vector $x\in L_N$ and let $P\in B(L_N)$ be the orthogonal projection onto the one-dimensional subspace generated by $x$. If
$$
\rho=
\sqrt{\frac{(1-\|C_k\|)^n}{n2^{2n+7}\|T\|^n}}
$$
then
 $\rho\le\frac{1}{4}$ and $\sqrt{1-\rho^2}\ge \frac{1}{2}$. For each $j=1,\dots,n$ consider
 \begin{align*}
C_{N,j}'=&(I-P)C_N^j(I-P)+\frac{1}{\sqrt{1-\rho^2}}(I-P)C_N^jP\\
+&\frac{1}{\sqrt{1-\rho^2}}PC_N^j(I-P)
+\frac{PC_N^jP-\rho^2\langle T^j b,b\rangle P}{1-\rho^2},
\end{align*}
where $I$ denotes the identity operator on $L_N$.
We have
\begin{align*}
\|C_{N,j}'-C_N^j\|\le&
2\Bigl(\frac{1}{\sqrt{1-\rho^2}}-1\Bigr)+\Bigl(\frac{1}{1-\rho^2}-1\Bigr)+\frac{\rho^2}{1-\rho^2}\|T\|^j\\
\le& 8\rho^2\|T\|^j.
\end{align*}

By Lemma \ref{L3.2}, there exists a subspace $K_N'\subset H_N'$ such that
$$
P_{K_N'}(T,\dots,T^n)\vert_{K_N'}\usim (C_{N,1}',\dots,C_{N,n}'),
$$
i.e.,
there exists a unitary operator $U:L_N\to K_N'$ such that
$$
U^{-1}P_{K_N'}(T,\dots,T^n)\vert_{K_N'} U=(C_{N,1}',\dots,C_{N,n}').
$$
Set
$$
v=\sqrt{1-\rho^2} Ux+\rho b.
$$
Since $Ux\in K_N'\subset H_N'$ and $H_N'\perp b$, we have $\|v\|=1$ and $v\perp U(I-P)L_N$.
Let $$K_N=U(I-P)L_N\vee \{v\}$$ and note that $K_N\perp\bigvee_{k\le N-1}K_k$.
Define a unitary operator $V:L_N\to K_N$ by $$V\vert_{(I-P)L_N}:=U\vert_{(I-P)L_N} \qquad \text{and} \qquad Vx:=v.$$

 For all $z=u+sv$, $u\in (I-P)L_N$, $s\in\CC$, and $1\le j\le n$ we have
\begin{align*}
\bigl\langle V^{-1}(P_{K_N}&T^j\vert_{K_N}) Vz,z\bigr\rangle\\
=&
\bigl\langle T^j(Uu+sv),Uu+sv\bigr\rangle\\
=&\bigl\langle T^j(Uu+s\sqrt{1-\rho^2} Ux), Uu+s\sqrt{1-\rho^2} Ux\bigr\rangle
+s^2\rho^2\langle T^j b,b\rangle\\
=&\bigl\langle (I-P)C_{N,j}'(I-P)z,z\bigr\rangle+\sqrt{1-\rho^2}\bigl\langle (I-P)C_{N,j}'Pz,z\bigr\rangle\\ +&\sqrt{1-\rho^2}\bigl\langle PC_{N,j}'(I-P)z,z\bigr\rangle\\
+&(1-\rho^2)\langle PC_{N,j}'Pz,z\rangle+\rho^2\langle T^jb,b\rangle\langle Pz,z\rangle\\
=&
\langle C_N^jz,z\rangle.
\end{align*}
Hence $$P_{K_N}(T,\dots,T^n)\vert_{K_N}\usim (C_N,\dots,C_N^n).$$

Moreover, we have $\langle b,v\rangle=\rho$ and
$$
\dist^2\Bigl\{y_m,\bigvee_{k\le N}K_k\Bigr\}=t^2\dist^2\Bigl\{b,\bigvee v\Bigr\}=t^2(1-\rho^2).
$$
Thus, recalling the definition of $\rho$,
$$
\ln\frac{\dist^2\Bigl\{y_m,\bigvee_{k\le N}K_k\Bigr\}}{\dist^2\Bigl\{y_m,\bigvee_{k\le N-1}K_k\Bigr\}}
=\ln\Bigl(1-\frac{(1-\|C_N\|)^n}{n2^{2n+7}\|T\|^n}\Bigr)\le
-\frac{(1-\|C_N\|)^n}{n2^{2n+7}\|T\|^n}.
$$
As in Theorem \ref{T3.1}, this finishes the proof.

\end{proof}
\section{Acknowledgments}
We would like to thank the anonymous referee for helpful remarks
improving the presentation.

\end{document}